\documentclass{article}
\usepackage{amsmath, amssymb, amsthm, bm, stmaryrd}
\usepackage[utf8]{inputenc}
\usepackage[T1]{fontenc}
\usepackage{lmodern}
\usepackage{hyperref,cleveref}
\usepackage{todonotes}

\DeclareMathOperator{\sos}{sos}
\DeclareMathOperator{\mom}{mom}
\DeclareMathOperator{\rank}{rank}

\DeclareMathOperator{\vecc}{vec}
\DeclareMathOperator{\QM}{QM}
\DeclareMathOperator{\st}{s.t.}
\DeclareMathOperator{\Span}{span}

\newcommand{\di}{\mathrm{d}}
\newcommand\R {\mathbb{R}}
\newcommand\N {\mathbb{N}}

\newcommand\sph {\mathbb{S}}
\newcommand{\sphs}{\mathcal{S}}
\newcommand{\manif}{\mathcal{K}}

\newcommand{\pn}{\bm{\bar{p}}}
\newcommand{\rkone}{\mathcal{Q}}
\newcommand{\rkonex}{\mathcal{X}}

\newtheorem{thm}{Theorem}[section]
\newtheorem{lem}[thm]{Lemma}
\newtheorem{prop}[thm]{Proposition}
\newtheorem{cor}[thm]{Corollary}
\newtheorem{assump}[thm]{Assumption}

\theoremstyle{definition}
\newtheorem{defn}[thm]{Definition}
\newtheorem{ex}[thm]{Example}
\newtheorem{rmk}[thm]{Remark}

\numberwithin{equation}{section}

\title{Finite Convergence of the Moment-SOS Hierarchy on the Product of Spheres}
\author{Sami Halaseh\footnote{LAAS-CNRS, 7 av du Colonel Roche, 31031 Toulouse Cedex 4, France.} \footnote{Fachbereich Mathematik und Statistik, Universit\"at Konstanz, Germany.} \and   Victor Magron\footnotemark[1] \footnote{Toulouse Mathematics Institute,  118 route de Narbonne, 31062 Toulouse Cedex 9, France.} \and Mateusz Skomra\footnotemark[1]}

\begin{document}
\maketitle

\begin{abstract}
We study the polynomial optimization problem of minimizing a multihomogeneous polynomial over the product of spheres. This polynomial optimization problem models the tensor optimization problem of finding the best rank one approximation of an arbitrary tensor. We show that the moment-SOS hierarchy has finite convergence in this case, for a generic multihomogeneous objective function. To show finite convergence of the hierarchy, we use a result of Huang et al. [SIAM J.Optim. 34(4) (2024), pp 3399-3428], which relies on local optimality conditions. To prove that the local optimality conditions hold generically, we use techniques from differential geometry and Morse theory. This work generalizes the main result of Huang [Optim. Lett. 17(5) (2023), pp 1263-1270], which shows finite convergence for the case of a homogeneous polynomial over a single sphere.
    
\textbf{Keywords:}  polynomial optimization, tensor optimization, moment-SOS hierarchy, Morse theory, transversality.

\textbf{MSC:} 90C23, 15A69, 53Z99, 14P10
\end{abstract}
	
\section{Introduction}\label{intro}
	
A general polynomial optimization problem has the form:
\begin{equation}\label{POP}
	\begin{aligned}
		f_{\min} = \;&\inf  \; \; f (x) \\
		&\st \; h_i(x) = 0, \;\;  i \in [m],\\
		&\qquad \; g_j(x)\geq 0, \; \; j \in [\bar{m}],
	\end{aligned}
	\tag{POP}
\end{equation}
where $f,g_1,\dots, g_{\bar{m}},$ $h_1, \dots ,h_m$ are multivariate polynomials with real coefficients. A common approach to solving \eqref{POP} is to use the moment-SOS hierarchy, which a sequence of semidefinite relaxations. In this paper, we prove the finite convergence of the moment-SOS hierarchy for the polynomial optimization problem of minimizing a multihomogeneous objective polynomial over the product of spheres. We introduce the notation needed to state the problem. Consider the tuple $(n_1,\dots,n_m)\in\N^m$, and set $N= \sum_{i =1}^m n_i$. We assume that $n_i\geq 2$ for all $i$ in $[m]$. Set $\bm{x}_i = (x_{i1},\dots x_{in_i})$, for each $i\in [m]$, and $\bm{x} = (\bm{x}_1 , \dots , \bm{x}_m) \in  \R^N$. With this we have the following shorthand for a real polynomial ring,
\begin{equation}\label{vectorpolyring}
	\R[\bm{x}] = \R[\bm{x}_1,\dots, \bm{x}_m] = \R[x_{11},\dots x_{1n_1}, x_{21},\dots x_{2n_2} \dots, x_{m1},\dots x_{mn_m}].
\end{equation}

For a monomial in $\R[\bm{x}]$ and $i\in [m]$, we define the $i^{th}$ \textit{degree}, to be the sum over $j\in [n_i]$ of the exponents of the variables $x_{ij}$. We say a polynomial $f\in \R[\bm{x}]$ is \textit{multihomogeneous} of \textit{multidegree} $(d_1,\dots,d_m)$, if the $i^{th}$ degree of every monomial of $f$ is equal to $d_i$, for each $i\in [m]$. We set $\R[\bm{x}]_{=(d_1,\dots,d_m)}$ as the set of all multihomogeneous polynomials of multidegree $(d_1,\dots,d_m)$. In particular, a multihomogeneous polynomial of multidegree $(d_1,\dots,d_m)$ is homogeneous of degree $\sum_i d_i$.
	
With $\sph^{n_i-1} = \{\bm{x}_i\in \R^{n_i}\;|\;  1 - \|\bm{x}_i\|^2 = 0\}$ as the unit sphere in $\R^{n_i}$, we refer to $\sph^{n_1-1}\times \dots \times \sph^{n_m-1} \subset \R^N$ as the \textit{product of spheres}. For $f\in \R[\bm{x}]$ multihomogeneous, we are interested in the polynomial optimization problem of finding the minimum of $f$ on the product of spheres. The moment-SOS hierarchy gives a nondecreasing sequence of lower bounds on the minimum of $f$. By Putinar's Positivstellensatz, this sequence will converge to the minimum of $f$ asymptotically. However, in practice, it is preferable to know that the moment-SOS hierarchy converges in finitely many steps. This leads to our main result, which makes use of the finite convergence result in \cite{huangnie_newflatness_2024}.

\begin{thm}\label{main_result}
Let $f$ be a generic real multihomogeneous polynomial. For the following polynomial optimization problem of minimizing $f$ on the product of spheres
\begin{equation}\label{POPmultisphere}
	\begin{aligned}
	f_{\min} = \;  &\inf  \; \; f (\bm{x})\\
	& \st \; \bm{x} = (\bm{x}_1,\dots, \bm{x}_m) \in \mathbb{S}^{n_1-1}\times \dots \times \mathbb{S}^{n_m-1},
\end{aligned}
\end{equation}
the moment-SOS hierarchy has finite convergence. Furthermore, the optimality certificate of flat truncation for the moment hierarchy holds for a sufficiently high order of the hierarchy.
\end{thm}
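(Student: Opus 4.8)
The plan is to reduce Theorem~\ref{main_result} to a genericity statement about the Morse theory of $f$ restricted to the product of spheres, and then to prove that statement by a parametric transversality argument adapted to the (constrained) family of multihomogeneous polynomials.

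\emph{Step 1: reduction to generic nondegeneracy.} Fix the multidegree $(d_1,\dots,d_m)$ (necessarily $d_i\ge 1$, since otherwise $f$ is constant along the $i$-th sphere and no feasible point is a nondegenerate critical point), and set $V := \R[\bm x]_{=(d_1,\dots,d_m)}$, a finite-dimensional real vector space. Write $\manif = \sph^{n_1-1}\times\dots\times\sph^{n_m-1}$ for the feasible set of \eqref{POPmultisphere}, cut out by $h_i(\bm x)=1-\|\bm x_i\|^2$. By \cite{huangnie_newflatness_2024}, the moment-SOS hierarchy converges finitely and flat truncation certifies optimality once suitable optimality conditions (the linear independence constraint qualification, strict complementarity, the second-order sufficient condition) hold at every minimizer of \eqref{POPmultisphere}. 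Here the gradients $\nabla h_i(\bm x) = -2(0,\dots,\bm x_i,\dots,0)$ lie in pairwise orthogonal coordinate blocks and are nonzero on $\manif$, so LICQ holds everywhere; there are no inequality constraints, so strict complementarity is vacuous; and at a KKT point the Hessian of the Lagrangian restricted to the critical cone $\{v:\nabla h_i(\bm x)^\top v=0\}=T_{\bm x}\manif$ equals the Riemannian Hessian of $f|_\manif$ at that point. Thus the second-order sufficient condition at a minimizer $\bm u$ is exactly that $\bm u$ is a nondegenerate critical point of $f|_\manif$. It therefore suffices to show that for generic $f\in V$ every critical point of $f|_\manif$ is nondegenerate, i.e.\ $f|_\manif$ is a Morse function; compactness of $\manif$ then also forces the minimizer set to be finite.

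\emph{Step 2: the transversality setup.} Recall that $f|_\manif$ is Morse if and only if the section $\di(f|_\manif)\colon\manif\to T^*\manif$ is transverse to the zero section $Z\cong\manif$. I would introduce the smooth ``universal section'' $\Sigma\colon\manif\times V\to T^*\manif$, $\Sigma(\bm x,f)=\di(f|_\manif)_{\bm x}$, which is linear in $f$. If $\Sigma\pitchfork Z$, then $\Sigma^{-1}(Z)$ is a submanifold of $\manif\times V$ of dimension $\dim V$, and by Sard's theorem almost every $f\in V$ is a regular value of the projection $\Sigma^{-1}(Z)\to V$; for such $f$ the section $\di(f|_\manif)$ is transverse to $Z$, so $f|_\manif$ is Morse. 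Unwinding the transversality condition at a pair $(\bm u,f_0)$ with $\di(f_0|_\manif)_{\bm u}=0$, the horizontal directions of $T(T^*\manif)$ are covered by $T_{\bm u}Z$, so $\Sigma\pitchfork Z$ at that pair reduces to surjectivity of the vertical derivative $V\to T^*_{\bm u}\manif$, $g\mapsto\di(g|_\manif)_{\bm u}$.

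\emph{Step 3: richness of the multihomogeneous family (the main obstacle).} The crux, and the place where the constraint ``multihomogeneous of fixed multidegree'' could have caused trouble, is to show that for \emph{every} $\bm u=(\bm u_1,\dots,\bm u_m)\in\manif$ the evaluation map $g\mapsto\di(g|_\manif)_{\bm u}$ from $V$ onto $T^*_{\bm u}\manif=\bigoplus_{i=1}^m(\bm u_i^\perp)^*$ is surjective. I would prove this by an explicit construction: for a vector $c_1\perp\bm u_1$, put
\begin{equation*}
  g \;=\; \langle\bm u_1,\bm x_1\rangle^{d_1-1}\,\langle c_1,\bm x_1\rangle\;\prod_{i=2}^m\langle\bm u_i,\bm x_i\rangle^{d_i}\ \in\ V .
\end{equation*}
Using $\|\bm u_i\|=1$, $\langle c_1,\bm u_1\rangle=0$ and $d_1\ge 1$, one computes $\nabla_{\bm x_1}g(\bm u)=c_1$ and $\nabla_{\bm x_j}g(\bm u)=0$ for $j\ge 2$, so $\di(g|_\manif)_{\bm u}$ is the covector $v=(v_1,\dots,v_m)\mapsto\langle c_1,v_1\rangle$. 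Running the symmetric construction in each block and taking linear combinations realizes every element of $T^*_{\bm u}\manif$, which is the required surjectivity; by Step~2 this yields $\Sigma\pitchfork Z$, hence $f|_\manif$ is Morse for almost every $f\in V$. This step is where the hypothesis $d_i\ge 1$ is genuinely used, and it is the one I expect to be the main obstacle.

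\emph{Step 4: from ``almost every'' to ``generic''.} Finally, the set of $f\in V$ for which $f|_\manif$ fails to be Morse is the projection to $V$ of
\begin{equation*}
  \bigl\{(\bm x,\lambda,f)\in\manif\times\R^m\times V:\ \nabla_{\bm x_i}f(\bm x)+2\lambda_i\bm x_i=0\ (i\in[m]),\ \det H(\bm x,\lambda,f)=0\bigr\},
\end{equation*}
where $H$ is the bordered Hessian of the Lagrangian, whose nonvanishing determinant encodes nondegeneracy of $\nabla^2 L$ on $T_{\bm x}\manif$. This defining set is real algebraic, so by the Tarski--Seidenberg theorem its image in $V$ is semialgebraic, and since it has Lebesgue measure zero by Steps~2--3 its Zariski closure is a proper subvariety of $V$. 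Taking ``generic'' to mean ``outside this proper subvariety'' and invoking \cite{huangnie_newflatness_2024} as in Step~1 then gives both finite convergence of the moment-SOS hierarchy and the flat-truncation certificate, completing the proof.
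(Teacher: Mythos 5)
Your proposal is correct and follows essentially the same route as the paper: reduce to the local optimality conditions of Huang--Nie--Yuan (Theorem~\ref{KKTfiniteconvergence}), observe that \eqref{LIC} holds everywhere and \eqref{SCC} is vacuous, identify \eqref{SOSC} at minimizers with nondegeneracy of critical points of $f|_\sphs$, and establish that a generic $f$ is Morse via a parametric Sard/transversality argument whose crux is the surjectivity of $b\mapsto d_{\bm u}f_b$ onto $T^*_{\bm u}\sphs$ at every $\bm u$. The one place you genuinely diverge is the proof of that surjectivity: the paper (Lemmas~\ref{Phi_pn_surjective_lemma}--\ref{surjectivity_lemma}) verifies it at the single point $\pn$ using coordinate monomials and then transports it to an arbitrary point by block-diagonal Householder reflections, which preserve both the product of spheres and the multidegree; you instead exhibit, directly at an arbitrary $\bm u$, the polynomial $\langle\bm u_1,\bm x_1\rangle^{d_1-1}\langle c_1,\bm x_1\rangle\prod_{i\ge 2}\langle\bm u_i,\bm x_i\rangle^{d_i}$ with $c_1\perp\bm u_1$, whose differential at $\bm u$ is exactly $v\mapsto\langle c_1,v_1\rangle$. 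Your computation is correct (the $\langle c_1,\bm u_1\rangle=0$ factor kills all the unwanted terms, including the whole gradient in the blocks $j\ge 2$), and it is arguably more economical since it bypasses the isometry-transport lemmas; the paper's version has the mild advantage of isolating the group-action argument, which is reusable for other invariant families. Your Steps~2 and~4 respectively unpack the black-box Theorem~\ref{nondeg_generic} and reproduce the semialgebraicity observation of Remark~\ref{measure_zero_rmk}, so no gap there.
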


\begin{rmk}
The statement above also holds for any larger space of polynomials, such as generic homogeneous polynomials or generic nonhomogeneous polynomials, with the same proof.
\end{rmk}

Genericity in the statement above refers to the coefficient space of the objective function. In other words, for a multihomogeneous polynomial $f$ with coefficients outside a Lebesgue measure zero subset of the space of coefficients, the moment-SOS hierarchy has finite convergence for \eqref{POPmultisphere}.

The multihomogeneous structure of the objective function allows the polynomial optimization problem \eqref{POPmultisphere} to model the tensor optimization problem of finding the best rank one approximation of an arbitrary tensor, as studied in \cite{nie_rankone_2014}. We describe this application in Subsection \ref{tensorapp}.

\subsection*{Related Works}

The case $m=1$ in Theorem \ref{main_result} was proven by Huang \cite[Corollary 3.5]{huang_homosphere_2023}. Huang's work relies on the finite convergence result of \cite{nie_localopt_2014}. Instead, we use a result from \cite{huangnie_newflatness_2024}, which improves that of \cite{nie_localopt_2014}. Both results require that certain local optimality conditions hold at all the global minimizers of our polynomial optimization problem. Huang shows that the local optimality conditions hold generically by using projective elimination theory to exploit the algebraic structure of the problem. In the case of \eqref{POPmultisphere}, where a multihomogeneous objective function is considered, this technique fails. The structure of a multihomogeneous polynomial causes `too much' to be eliminated, and the information is lost. However, Huang's argument is likely to work for an objective function which is just homogeneous (rather than multihomogeneous) on the product of spheres.

The work of Lee and Ph{\d{a}}m in \cite{lee_genericcpt_2016,lee_generic_2017} is also closely related. The authors consider a polynomial optimization problem with some assumptions on the objective and constraining polynomials. They show, using a variant of Sard's theorem, that if a generic linear perturbation is applied to the objective and constraining polynomials, the minimizers of the perturbed polynomial optimization problem will satisfy certain local optimality conditions. While our techniques are similar, there are two important points of divergence. First, we do not consider perturbations of the objective function, which would potentially alter its multihomogeneous structure. Rather, we show that outside some negligible subset of the input space, any objective function satisfies the required conditions. Second, by using techniques from Morse theory, we weaken the assumptions needed to apply Sard's theorem. These points are especially crucial when the structure of the objective function is rigid. This is the case in \eqref{POPmultisphere}, where we must preserve multihomogeneity.

In Section \ref{inequalities_sec}, we use the differential geometric notion of transversality to prove the genericity of local optimality conditions for the minimizers of the problem \eqref{POPmultisphere} with additional inequality constraints. This was motivated by the use of transversality in \cite{springarn_optcondi_1982}.

The work of the second author in \cite{magron_convgrate_2025}, studies the rate of convergence of the moment-SOS hierarchy when applied to a polynomial optimization problem on the product of spheres. 
The resulting quadratic convergence rate is obtained by extending the polynomial kernel technique used for a single sphere in \cite{fang2021sum}, and for other feasible sets in \cite{slot2022sum}; see \cite{laurent2024overview} for a recent survey of convergence rates for hierarchies in polynomial optimization. 
%When coupled with Theorem \ref{main_result}, this work shows that the moment-SOS hierarchy is a promising technique for problems which can be modeled by the polynomial optimization problem \eqref{POPmultisphere}. 
When coupled with with Theorem \ref{main_result}, the results of \cite{fang2021sum,magron_convgrate_2025} establish that the moment-SOS hierarchy  provides a constructive and computationally tractable approach for polynomial optimization problems formulated as in \eqref{POPmultisphere}.
To strengthen these results, further work can be done to find an upper bound on the finite convergence order, see Section \ref{further_work_sec}.

The problem \eqref{POPmultisphere} models the tensor optimization problem of finding the best rank one approximation of a given tensor, see Subsection \ref{tensorapp} for details. This tensor optimization problem has received attention from other fields. For example, in \cite{friedland_rankonelinalg_2013}, linear algebra techniques are used, while in \cite{chen_rankoneriemann_2025}, techniques from Riemannian geometry are used. The formulation and approach in this paper can be found in \cite{nie_rankone_2014}. There, extensive numerical results are presented. This situates our work in a recent line of research studying the applications of polynomial optimization and semidefinite programming to tensor optimization problems. For further reference, see \cite{nie_tensornuclear_2017,nie_cubic_2025,nie_robusttensor_2025,huangnie_tensor_2024}. 
An important problem arising from quantum information theory is the one of testing the separability of quantum states. 
This separability test can be practically achieved via the DPS hierarchy of semidefinite programs from \cite{doherty2004complete}. 
%\cite{harrow_sdpfordps_2017}
We comment further on connections to the DPS hierarchy in Section \ref{further_work_sec}.

\subsection*{Outline}

We briefly outline the paper. In Section \ref{preliminaries}, we introduce the necessary preliminaries: local optimality conditions from classical nonlinear optimization, the moment-SOS hierarchy, and the necessary background for our main application to tensor optimization. In Section \ref{morse_sec}, we develop the tools needed to prove our Theorem \ref{main_result}. These tools come from differential geometry and Morse theory. An effort has been made to make these notions accessible to the nonexpert. Section \ref{finiteconvergence_sec} is dedicated to the proof of Theorem \ref{main_result}. In Section \ref{inequalities_sec}, we extend our main result to include inequality constraints. In that case, we show finite convergence of the moment-SOS hierarchy using the notion of parametric transversality from differential geometry. In Section \ref{further_work_sec}, we close by describing some directions for possible further work.

\subsection*{Acknowledgements}

This work has been supported by European Union’s HORIZON–MSCA-2023-DN-JD programme under the Horizon Europe (HORIZON) Marie Skłodowska-Curie Actions, grant agreement 101120296 (TENORS).

\section{Preliminaries}\label{preliminaries}

\subsection{Local Optimality Conditions}\label{local_opt_cond_section}
	
We begin by reviewing local optimality conditions from classical nonlinear optimization. A classic reference is \cite{bertsekas_nonlinear_2016}. As stated in the introduction, we have polynomials $f,g_1,\dots,g_{\bar{m}}, h_1,\dots,h_m\in\R [x]$ defining an optimization problem \eqref{POP}. A feasible point $u$ is called a \textit{local minimum}, if there exists $\varepsilon > 0$ such that 
\[\text{for all}\; x \in B_\epsilon(u)\cap \manif \;\; f(u)\leq f(x),\]
where $B_\varepsilon(u)$ denotes the ball of radius $\varepsilon$ centered at $u$ and $\manif$ is the feasible semialgebraic set of \eqref{POP}. When the inequality above is strict for all $x \neq u$, we say $u$ is a \textit{strict local minimum}. In the following, we suppose $u$ is a local minimum of \eqref{POP}. Define the set of \textit{active constraints at} $u$ as $J(u) = \{j \in [\bar{m}] \;|\; g_j(u)=0\}$. So for $j\in [\bar{m}] \setminus J(u)$ we have that $g_j(u)>0$ is an \textit{inactive constraint}. We say the \textit{linear independence constraint qualification condition} \eqref{LIC} is satisfied at $u$ if the set of gradients
\begin{equation}\label{LIC}
	\{\nabla_x h_i(u)\}_{i\in[m]} \cup \{\nabla_x g_j(u)\}_{j\in J(u)}
	\tag{LIC}
\end{equation}
is linearly independent. This is also known as \textit{regularity}. 
For $u$ satisfying \eqref{LIC}, there exist real scalars $\{\lambda_i\}_{i\in[m]} \cup \{\mu_j\}_{j\in[\bar{m}]}$ such that the following conditions, called \textit{first order optimality condition} \eqref{FOOC} and the \textit{complementary condition} \eqref{CC}, hold:
\begin{equation}\label{FOOC}
	\nabla_x f (u) \; = \; \sum_{j=1}^{\bar{m}} \mu_j \nabla_x g_j(u)\; + \;\sum_{i=1}^m \lambda_i \nabla_x h_i(u)
	\tag{FOOC}
\end{equation}
\begin{equation}\label{CC}
\begin{aligned}
\mu_j \geq 0, \;\; &\text{for all} \;\; j \in J(u),\\
\mu_j=0, \;\; &\text{for all} \;\; j \in [\bar{m}]\setminus J(u).
\end{aligned}
\tag{CC}
\end{equation}
These two conditions together are known as the \textit{Karush Kuhn Tucker} (KKT) conditions. The scalars $\{\lambda_i\}_{i\in[m]} \cup \{\mu_j\}_{j\in[\bar{m}]}$ are known as the \textit{Lagrange multipliers} of $u$. If a feasible point $p$ (not necessarily a local minimum) satisfies the (KKT) conditions, we say it is \textit{critical point}. If, for a local minimum $u$, we have
\begin{equation}\label{SCC}
\begin{aligned}
\mu_j > 0, \;\; &\text{for all} \;\; j \in J(u)\\
\mu_j=0, \;\; &\text{for all} \;\; j \in [\bar{m}]\setminus J(u)
\end{aligned}
\tag{SCC}
\end{equation}
we say $u$ satisfies the \textit{strict complementary condition} \eqref{SCC}. We define the \textit{Lagrangian} of \eqref{POP} as 
\begin{equation}\label{lagrangian_function}
	\mathcal{L}(x,\lambda, \mu) = f(x) - \sum_{j=1}^{\bar{m}} \mu_j g_j(x) - \sum_{i=1}^m \lambda_i h_i(x).
\end{equation}
This allows us to write the condition \eqref{FOOC} more concisely as,
\[\nabla_x \mathcal{L} (u,\lambda,\mu) = 0.\]
Also under the \eqref{LIC} assumption on $u$, we have the \textit{second order necessary condition} \eqref{SONC}, which uses the Hessian of the Lagrangian,
\begin{equation}\label{SONC}
	v^T \left(\nabla^2_x \mathcal{L} (u)\right)v \geq 0 \quad\text{for all}\quad v\in \left(\,\bigcap_{i=1}^m \nabla_x h_i(u)^\perp \right) \;\bigcap\; \left(\,\bigcap_{j\in J(u)} \nabla_x g_j(u)^\perp\right),
	\tag{SONC}
\end{equation}
where $\nabla_x h_i(u)^\perp = \{v \in \R^n \;|\; h_i(u)^Tv = 0\}.$ When the inequality above is strict for $v\neq0$, we say $u$ satisfies the \textit{second order sufficiency condition} \eqref{SOSC} 
\begin{equation}\label{SOSC}
	v^T \left(\nabla^2_x \mathcal{L} (u)\right)v > 0 \quad\text{for all}\quad v\in \left(\,\bigcap_{i=1}^m\nabla_x h_i(u)^\perp\right) \; \bigcap \; \left(\,\bigcap_{j\in J(u)} \nabla_x g_j(u)^\perp\right) \setminus \{0\}.
	\tag{SOSC}
\end{equation}
The strict complementary condition and the second order sufficiency condition are sufficient to prove that $u$ is a local minimum. In summary, we have the following proposition.
\begin{prop}[{\cite[Sections 3.1 \& 3.2]{bertsekas_nonlinear_2016}}]\label{local_min_conditions}
	If $u$ is a local minimum of \eqref{POP} satisfying \eqref{LIC}, then it also satisfies the conditions \eqref{FOOC}, \eqref{CC}, and \eqref{SONC}. 
	
	Alternatively, if a feasible point $u$ satisfies the conditions \eqref{FOOC}, \eqref{SCC}, and \eqref{SOSC}, then $u$ is a strict local minimum.
\end{prop}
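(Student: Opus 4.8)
\section*{Proof proposal}

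This is the classical first- and second-order KKT theory (see \cite[Sections 3.1 \& 3.2]{bertsekas_nonlinear_2016}), so I only sketch the structure of the two implications.

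\emph{Necessity.} Since $g_j(u) > 0$ for $j \in [\bar m]\setminus J(u)$, these constraints stay positive on a neighbourhood of $u$, so it suffices to treat $u$ as a local minimum of $f$ subject only to $h_i(x) = 0$ ($i \in [m]$) and $g_j(x) \geq 0$ ($j \in J(u)$). Let $T = \{v : \nabla_x h_i(u)^T v = 0 \text{ for } i\in[m],\ \nabla_x g_j(u)^T v \geq 0 \text{ for } j \in J(u)\}$ be the linearized cone. Fixing $v \in T$, the implicit function theorem — applied to the $h_i$ together with those active $g_j$ with $\nabla_x g_j(u)^T v = 0$, which is where \eqref{LIC} is essential — produces a feasible arc $\gamma(t)$ with $\gamma(0) = u$, $\dot\gamma(0) = v$; differentiating $t \mapsto f(\gamma(t))$ at $0$ then gives $\nabla_x f(u)^T v \geq 0$ for all $v \in T$. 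By Farkas' lemma (equivalently Motzkin's transposition theorem), this means $\nabla_x f(u)$ lies in the cone generated by $\{\pm\nabla_x h_i(u)\}_{i\in[m]} \cup \{\nabla_x g_j(u)\}_{j\in J(u)}$, yielding $\lambda_i \in \R$ and $\mu_j \geq 0$ ($j \in J(u)$) with \eqref{FOOC}; setting $\mu_j = 0$ for $j \notin J(u)$ gives \eqref{CC}. For \eqref{SONC}, restrict $f$ to the manifold $M = \{x : h_i(x) = 0 \ (i\in[m]),\ g_j(x) = 0\ (j \in J(u))\}$, whose tangent space at $u$ is exactly the subspace in \eqref{SONC}; as $f|_M$ has an unconstrained local minimum at $u$, the second derivative of $t\mapsto f(\gamma(t))$ at $0$ is nonnegative for every curve $\gamma$ in $M$ through $u$, and using $h_i(\gamma(t))\equiv 0$, $g_j(\gamma(t))\equiv 0$ together with \eqref{FOOC} to eliminate the acceleration term $\ddot\gamma(0)$ turns this into $v^T(\nabla_x^2\mathcal{L}(u))v \geq 0$.

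\emph{Sufficiency.} Here the multipliers are given, so \eqref{LIC} is not needed. Suppose $u$ is feasible with multipliers satisfying \eqref{FOOC}, \eqref{SCC}, \eqref{SOSC} but is not a strict local minimum; pick feasible $x_k \to u$, $x_k \neq u$, with $f(x_k) \leq f(u)$, write $x_k = u + t_k v_k$ with $t_k \downarrow 0$, $\|v_k\| = 1$, and pass to a subsequence with $v_k \to v$, $\|v\| = 1$. First-order expansion of the constraints gives $\nabla_x h_i(u)^T v = 0$ and $\nabla_x g_j(u)^T v \geq 0$ ($j \in J(u)$), and of $f$ gives $\nabla_x f(u)^T v \leq 0$; substituting into \eqref{FOOC} yields $0 \geq \nabla_x f(u)^T v = \sum_{j \in J(u)} \mu_j\,\nabla_x g_j(u)^T v \geq 0$, so each term vanishes, and since $\mu_j > 0$ by \eqref{SCC} we get $\nabla_x g_j(u)^T v = 0$ for all $j\in J(u)$, i.e. $v$ lies in the subspace of \eqref{SOSC}. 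Now expand the Lagrangian; since $\nabla_x\mathcal{L}(u,\lambda,\mu) = 0$,
\[
\mathcal{L}(x_k,\lambda,\mu) = \mathcal{L}(u,\lambda,\mu) + \tfrac12 t_k^2\, v_k^T\bigl(\nabla_x^2\mathcal{L}(u)\bigr)v_k + o(t_k^2),
\]
while $\mathcal{L}(x_k,\lambda,\mu) = f(x_k) - \sum_{j\in J(u)}\mu_j g_j(x_k) \leq f(x_k) \leq f(u) = \mathcal{L}(u,\lambda,\mu)$ because $\mu_j > 0$, $g_j(x_k)\geq 0$ and $h_i(x_k) = 0$. Hence $v_k^T(\nabla_x^2\mathcal{L}(u))v_k + o(1) \leq 0$, and letting $k \to \infty$ gives $v^T(\nabla_x^2\mathcal{L}(u))v \leq 0$ with $v \neq 0$ in the subspace of \eqref{SOSC}, a contradiction.

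\emph{Main obstacle.} The delicate half is necessity: the careful construction, under \eqref{LIC}, of a feasible arc through each direction of the linearized cone (the one place regularity cannot be dropped), followed by the correct theorem of the alternative to extract multipliers with the right signs, and the bookkeeping that identifies the Hessian of $f|_M$ with $v \mapsto v^T(\nabla_x^2\mathcal{L}(u))v$. Sufficiency is, by contrast, an elementary two-term Taylor argument once one notices that the Lagrangian dominates $f$ along feasible sequences.
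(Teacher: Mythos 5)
Your proposal is correct: the paper does not prove this proposition but cites it to Bertsekas, and your sketch is precisely the classical argument given there (feasible arcs under \eqref{LIC} plus Farkas for \eqref{FOOC}--\eqref{CC}, restriction to the active-constraint manifold for \eqref{SONC}, and the two-term Taylor expansion of the Lagrangian along a violating sequence for sufficiency). No gaps; nothing further is needed.
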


\subsection{The Moment-SOS Hierarchy}\label{mom/sos_section}
	
In this subsection, we introduce the moment-SOS hierarchy and discuss its convergence and certification. References for the moment-SOS hierarchy include \cite{lasserre_introtopop_2015,laurent_survey_2009,nie_momentbook_2023}. References for real algebraic geometry include \cite{powers_certificates_2021,scheiderer_course_2024,schmudgen_moment_2017}.
	
In the general polynomial optimization problem \eqref{POP}, the feasible semialgebraic set is given as
\begin{equation}\label{feasible_semialg_set}
\manif = \{x\in\R^n \; | \; h_i(x) = 0 \;\text{for all}\; i\in [m]\} \;\cap \;\{x\in\R^n \; | \; g_j(x)\geq 0 \; \text{for all}\; j \in [\bar{m}]\}.
\end{equation}

Let $\Sigma[x]\subset \R[x] = \R[x_1,\dots,x_n]$ be the set of polynomials which can be written as a sum of squares. We define the \textit{ideal} generated by the polynomials $h_i$, and the \textit{quadratic module} generated by the polynomials $g_j$ as 
\[(h) = (h_1,\dots,h_m) = \left\{\sum_{i=1}^{m} p_i h_i \;|\; p_i\in\R [x] \;\text{for}\; i\in[m] \right\},\]
\[\QM(g) =\QM(g_1,\dots,g_{\bar{m}}) = \Bigg\{\sigma_0 +\sum_{j=1}^{\bar{m}}\sigma_jg_j \;|\; \sigma_j \in \Sigma[x] \;\text{for}\; j\in[\bar{m}]\cup \{0\}\Bigg\}.
\]
If the problem has no inequality constrains ($\bar{m}=0$), then we set $\QM(g) = \Sigma[x]$. If a polynomial $p$ is in the set $(h)+\QM(g)$, then is it clearly nonnegative on the feasible set $\manif$. We use this to write the \textit{SOS relaxation} of the problem \eqref{POP},
\begin{equation}\label{SOS}
	\begin{aligned}
		f_{\sos}=\; &\sup \; \; \lambda \\
		&\st\;  f-\lambda \in (h) + \QM(g).
	\end{aligned}
	\tag{SOS}
\end{equation}
Here $\lambda$ gives a lower bound approximation on $f_{\min}$. Truncating the degree of the polynomials turns \eqref{SOS} into a finite dimensional program,
\begin{equation}\label{SOSk}
	\begin{aligned}
		f_{\sos}^k = \; &\sup \; \; \lambda \\
		&\st  \;  f-\lambda \in (h)_{2k} + \QM(g)_{2k},
	\end{aligned}
	\tag{SOS, $k$}
\end{equation}
where
\[(h)_{2k} = \left\{\sum_{i=1}^{m} p_ih_i \;|\; p_i \in \R [x] \; \text{for}\; i \in [m], \; \deg(p_ih_i)\leq 2k\right\},\]
\[\QM(g)_{2k} = \Bigg\{\sigma_0 + \sum_{j=1}^{\bar{m}} \sigma_j g_j \;|\; \sigma_j \in \Sigma[x]\; \text{for}\; j\in [\bar{m}] \cup \{0\},\; \deg(\sigma_j g_j) \leq 2k\Bigg\}.\]
Thus, we have that $(h)_{2k} + \QM(g)_{2k}$ is a subset of the finite dimensional vector space of polynomials with degree less than or equal to $2k$, denoted by $\R [x]_{2k}$. 
The use of Gram representation for SOS polynomials turns \eqref{SOSk} into a semidefinite program;  see \cite[Chp 3]{powers_certificates_2021} for details. 

\begin{defn}\label{archQM}
	Given polynomials $h_1,\dots h_m,$ $g_1,\dots, g_{\bar{m}} \in \R [x]$, we say that the sum $(h) + \QM(g)$ is \textit{archimedean}, if there exists $R >0$, such that $R-\sum_{i=1}^{n} x_i^2 \in (h) + \QM(g)$.
\end{defn}
As an example, in the problem \eqref{POPmultisphere}, let $s_1(\bm{x}),\dots,s_m(\bm{x})$ be the polynomials giving the product of spheres constraint, i.e., 
\[
s_i(\bm{x}) = 1 - \|\bm{x}_i\|^2, \;\; \text{for each}\;\; i \in [m].
\]
Then, the sum $(h) + \QM(g) = (s) + \Sigma[x]$ is archimedean as it contains the ball constraint 
\[\sum_{i=1}^m s_i(\bm{x}) = \sum_{i=1}^m (1 - \|\bm{x}_i\|^2)= m - \|\bm{x}\|^2.\]
We now introduce the dual side of the hierarchy. Consider the following reformulation of \eqref{POP},
\begin{equation}\label{prob}
	\begin{aligned}
	f_{\min} = \;&\inf  \; \; \int_{\manif} f \; \di \mu \\
		&\st \; \mu\;  \text{is a probability measure supported on}\; \manif.
	\end{aligned}
\end{equation}
Define the $\alpha$ \textit{moment} of the measure $\mu$ as $y_\alpha = \int_{\manif} x^\alpha \di \mu$. This gives an infinite sequence $y=(y_\alpha)_{\alpha\in \N^n}$, which we call the \textit{moment sequence of} $\mu$.

\begin{defn}\label{moment_matrix}
Fix a sequence $y\in \R^{\N^n}$ and polynomial $g\in \R [x]$, with $g=\sum_{\alpha\in \N^n}g_\alpha x^\alpha$. We define the \textit{pseudo-moment matrix} associated with $y$ as
$$M(y)_{\alpha,\beta} = y_{\alpha+\beta},\;\; \alpha,\beta \in \N^n.$$
If $y$ comes from a measure, we say $M(y)$ is the \textit{moment matrix} of $y$. 
Let $\vecc(g)$ be the vector of coefficients of the polynomial $g$. We define a new sequence $gy \in \R^{\N^n}$, with $gy= M(y)\vecc(g),$ and $ (gy)_\alpha = \sum_{\delta\in \N^n} g_\delta y_{\alpha +\delta}$, which is a finite sum, since $g$ is a polynomial. We associate the following matrix with the sequence $gy$, 
\begin{equation}\label{shiftedsequence}
M(gy)_{\alpha,\beta} = (gy)_{\alpha+\beta} = \sum_{\delta\in \N^n} g_\delta y_{\alpha +\beta+\delta}.
\end{equation}
The matrix $M(gy)$ is called the \textit{localizing matrix} associated with $g$ and $y$. 
\end{defn}

The matrices defined above are infinite. As we did with the ideal and quadratic module, we define a truncation.

\begin{defn}
For a multi-index $\alpha \in \N$, we set $|\alpha| = \alpha_1 + \dots + \alpha_n$. Given $k \in \N$ and $y\in \R^{\N^n}$, we define the \textit{truncated moment matrix} of $y$ as $M_k(y)_{\alpha,\beta} = (y)_{\alpha+\beta}$, for all $\alpha, \beta \in \N^n$ with $|\alpha|, |\beta| \leq k$. Similarly, given $g\in \R [x]$, the \textit{truncated localizing matrix} of the sequence $gy$ is defined by $M_k(gy)_{\alpha,\beta} = (gy)_{\alpha+\beta}$, for all $\alpha, \beta \in \N^n$ with $|\alpha|, |\beta| \leq k$. 
\end{defn}
The following theorem, known as the dual version of Putinar's Positivstellensatz, gives us the sufficient conditions we need for an arbitrary sequence $y\in \R^{\N^n}$ to be the moment sequence of a measure. For a matrix $A$, we write $A\succeq0$ to denote that $A$ is positive semidefinite. We say the infinite pseudo-moment matrix $M(y)$ is positive semidefinite,
if the truncated pseudo-moment matrix $M_k(y)$ is positive semidefinite, for all $k \in \N$.

\begin{thm}[{\cite[Theorem 2.44]{lasserre_introtopop_2015}}]\label{Putinarmom}
Let $\manif$ be the semialgebraic set given in \eqref{feasible_semialg_set}, and suppose its defining polynomials are such that $(h)+\QM(g)$ is archimedean. Consider a sequence $y\in\R^{\N^n}$. There exists a measure $\mu$ with support contained in $\manif$, such that $y_\alpha = \int x^\alpha \di \mu$ for all $\alpha \in \N^n$, if and only if, the following conditions hold,
\[
M(y) \succeq 0, \; M(g_jy) \succeq 0, \;\text{and}\; M(h_iy)=0, \;\text{for all} \; j\in [\bar{m}] \; \text{and} \; i \in [m].
\]
\end{thm}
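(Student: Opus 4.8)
The plan is to treat the two implications separately; the forward (``only if'') direction is a direct computation, while the converse rests on Putinar's Positivstellensatz in its archimedean form together with Haviland's theorem. For the ``only if'' direction, suppose $y_\alpha = \int x^\alpha \di\mu$ for a positive Borel measure $\mu$ supported on $\manif$. For any $p \in \R[x]$ with $\deg p \le k$ one has $\vecc(p)^T M_k(y)\vecc(p) = \int p^2 \di\mu \ge 0$, so $M(y) \succeq 0$; applying the same identity to the positive measure $g_j\,\di\mu$ (positive because $g_j \ge 0$ on $\manif$) gives $M(g_j y) \succeq 0$; and since $h_i$ vanishes on $\manif$, every entry $(h_i y)_\gamma = \int x^\gamma h_i\,\di\mu$ is zero, so $M(h_i y) = 0$.

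For the ``if'' direction I would introduce the Riesz functional $L_y \colon \R[x] \to \R$ defined by $L_y(x^\alpha) = y_\alpha$ and extended linearly, and first reinterpret the three matrix conditions as properties of $L_y$. Writing a sum of squares as $\sigma = \sum_\ell p_\ell^2$, positivity of $M(y)$ (through its truncations) gives $L_y(\sigma) = \sum_\ell \vecc(p_\ell)^T M(y)\vecc(p_\ell) \ge 0$; likewise $M(g_j y) \succeq 0$ gives $L_y(\sigma g_j) = \sum_\ell \vecc(p_\ell)^T M(g_j y)\vecc(p_\ell) \ge 0$ for every $\sigma \in \Sigma[x]$; and $M(h_i y) = 0$ forces $(h_i y)_\gamma = 0$ for all $\gamma$, hence $L_y(q h_i) = 0$ for every $q \in \R[x]$. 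Summing, $L_y$ is nonnegative on the cone $(h) + \QM(g)$. The next step upgrades this to nonnegativity of $L_y$ on every polynomial nonnegative on $\manif$: if $p \ge 0$ on $\manif$ and $\epsilon > 0$, then $p + \epsilon > 0$ on $\manif$, so by Putinar's Positivstellensatz (here the archimedean hypothesis enters) $p + \epsilon \in (h) + \QM(g)$, whence $0 \le L_y(p + \epsilon) = L_y(p) + \epsilon\,y_0$; since $y_0 = L_y(1)$ is the $(0,0)$ entry of $M(y) \succeq 0$ and hence nonnegative, letting $\epsilon \downarrow 0$ yields $L_y(p) \ge 0$.

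Finally, $\manif$ is closed --- indeed compact, as archimedeanness forces $\manif \subseteq \{x \in \R^n \mid R - \sum_i x_i^2 \ge 0\}$ for some $R > 0$ --- so Haviland's theorem applies: a linear functional on $\R[x]$ that is nonnegative on every polynomial nonnegative on $\manif$ is of the form $q \mapsto \int q\,\di\mu$ for a positive Borel measure $\mu$ supported on $\manif$. In particular $y_\alpha = L_y(x^\alpha) = \int x^\alpha \di\mu$, as required. (If $y_0 = 0$, positive semidefiniteness of $M(y)$ forces $y \equiv 0$ and $\mu = 0$ works, so one may assume $y_0 > 0$.) The main obstacle is the invocation of Putinar's Positivstellensatz, which is the one genuinely nontrivial ingredient and the place where the archimedean condition is used essentially; the remaining steps are routine dictionary translations between moment matrices and linear functionals, together with the $\epsilon$-perturbation passing from strict positivity to nonnegativity on $\manif$ and Haviland's theorem for the final representation.
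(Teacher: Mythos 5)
The paper states this result as a citation of \cite[Theorem 2.44]{lasserre_introtopop_2015} and gives no proof of its own; your argument is correct and is precisely the standard one behind the cited result (direct computation for the ``only if'' direction; Riesz functional, the $\epsilon$-perturbation combined with Putinar's Positivstellensatz, and Haviland's theorem on the compact set $\manif$ for the converse). No gaps: the needed facts $y_0\geq 0$ and closedness/compactness of $\manif$ are both justified.
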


The above theorem allows us to formulate the \textit{moment relaxation} \eqref{mom} of the polynomial optimization problem \eqref{POP}, as
\begin{equation}\label{mom}
\begin{aligned}
	f_{\mom} = \;&\inf &&f^Ty\\
	&\st && M(y)\succeq 0 \\
	&\;\; &&M(g_jy)\succeq 0,\; j\in [\bar{m}]\\
	&\;\; &&M(h_iy) = 0, \; i \in [m]\\
	&\;\;  &&y_0 =1, \; y \in \R^{\N^n}.
\end{aligned}
\tag{mom}
\end{equation}
The first index of $y$ equalling one above, corresponds to the fact that we considered a probability measure in \eqref{prob}. We set
\begin{equation}\label{degrees}
d_{g_j} = \left\lceil \frac{\deg (g_j)}{2}\right\rceil \quad \text{and} \quad
d_{h_i} = \left\lceil \frac{\deg (h_i)}{2}\right\rceil.
\end{equation}
We truncate the relaxation to the following finite dimensional problem,
\begin{equation}\label{momk}
	\begin{aligned}
		f_{\mom}^k = \;&\inf &&f^Ty\\
		&\st && M_k(y)\succeq 0 \\
		&\;\; &&M_{k-d_{g_j}}(g_jy)\succeq 0,\; j\in [\bar{m}]\\
		&\;\; &&M_{k-d_{h_i}}(h_iy) = 0, \; i\in[m]\\
		&\;\;  &&y_0 =1, \; y \in \R^{\N^n_{2k}},
	\end{aligned}
	\tag{mom, $k$}
\end{equation}
which is the dual semidefinite program to \eqref{SOSk}.
The discussion above leads to the following definition.
\begin{defn}\label{mom/sos}
We define the \textit{moment-SOS hierarchy} as the two sequences of semidefinite programs \eqref{SOSk} and \eqref{momk} for $k\in \N$.
\end{defn}

\begin{defn}\label{finiteconvergencedef}
We say the moment-SOS hierarchy \textit{has finite convergence}, or \textit{converges at a finite order}, if for some $k_0\in \N$, we have 
\[f_{\sos}^{k_0}  = f_{\mom}^{k_0} = f_{\min}, \;\; \text{for all} \;\; k\geq k_0.\]
\end{defn}

By Putinar's Positivstellensatz (\cite[Theorem 7.3]{powers_certificates_2021} or \cite[Theorem 5.3.1]{scheiderer_course_2024}), when $(h) + \QM(g)$ is archimedean, the sequence $\{f_{\sos}^k\}$ converges to $f_{\min}$ as $k$ tends to infinity. See \cite[Theorem 6.2]{lasserre_globalopt_2000}, \cite[Theorem 6.8]{laurent_survey_2009} or \cite[Theorem 5.2.2]{nie_momentbook_2023} for details. However, in practice, it is preferable to know that the moment-SOS hierarchy converges in finitely many steps. If we have a result guaranteeing finite convergence, we would still need a certificate to know at which order convergence has occurred. Such an optimality certificate will come from the moment side of the hierarchy. With $d_f =\left\lceil \frac{\deg (f)}{2}\right\rceil$, set
\begin{equation}
d_{\manif} = \max\big\{1, d_{g_j},d_{h_i}\; |\; j\in[\bar{m}], \; i\in[m]\big\} \quad \text{and} \quad \bar{d} = \max\{d_f,d_{\manif}\}.
\end{equation}

\begin{defn}
We define the \textit{flat truncation condition} as, the existence of an integer $t \in [\bar{d},k]$, such that
\begin{equation}\label{flat_trunc_cond}
\rank M_{t-d_{\manif}} (y) = \rank M_{t}(y).
\end{equation}
where $k$ is as in \eqref{momk} and the notation of \eqref{degrees} is used.
\end{defn}

\begin{thm}[{\cite[Theorem 6.18]{laurent_survey_2009}}]\label{flat_trunc_thm}
Suppose $y$ is an optimizer of \eqref{momk}, which satisfies the flat truncation condition \eqref{flat_trunc_cond} for some integer $t$. Then, we have finite convergence of the moment hierarchy at order $k$, that is $f_{\mom}^k = f_{\min}$, and $y$ has an associated atomic measure of the form
\[
\mu = \sum_{i=1}^r c_i \, \delta_{x(i)}, \quad \text{with}\;\; c_i\in \R \;\; \text{and} \;\; x(i) \in \manif \;\; \text{for all} \;\; i \in [r],
\]
where $\delta_{(\cdot)}$ denotes the Dirac measure, $r=\rank M_t(y)$ and $\manif$ is the feasible set described in \eqref{feasible_semialg_set}.
\end{thm}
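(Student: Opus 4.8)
\emph{Proof plan.} The statement is, essentially, the flat extension theorem of Curto and Fialkow together with the localization of its representing measure to $\manif$, so the plan is to reduce to it in three moves: propagate the rank condition, extract the atomic measure, and locate its support. First I would note that the flat truncation condition \eqref{flat_trunc_cond} says more than ``$M_t(y)$ is a flat extension of $M_{t-1}(y)$''. Since $M_s(y)$ is a principal submatrix of $M_{s+1}(y)$, the ranks form a nondecreasing chain
\[
\rank M_{t-d_{\manif}}(y) \;\le\; \rank M_{t-d_{\manif}+1}(y) \;\le\; \cdots \;\le\; \rank M_t(y),
\]
so \eqref{flat_trunc_cond} forces $\rank M_s(y) = r$ for every $s$ with $t-d_{\manif}\le s\le t$, where $r=\rank M_t(y)$. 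In particular $M_t(y)$ is a flat extension of $M_{t-1}(y)$ (each column indexed by a degree-$t$ monomial lies in the span of the columns indexed by lower-degree monomials), and, importantly for the support step, this stabilization already holds from level $t-d_{\manif}$ on.

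Second, I would invoke the flat extension theorem: a positive semidefinite $M_t(y)$ that is a flat extension of $M_{t-1}(y)$ admits a representing measure, and it is $r$-atomic. Concretely, flatness makes the ``multiplication by $x_i$'' operators well defined on the finite-dimensional quotient $\R[x]_t/\ker M_t(y)$; these operators pairwise commute (again by flatness), hence are simultaneously diagonalizable, and their joint eigenvalues furnish points $x(1),\dots,x(r)\in\R^n$ with positive weights $c_1,\dots,c_r$ such that $y_\alpha=\int x^\alpha\,\di\mu$ for all $|\alpha|\le 2t$, where $\mu=\sum_{i=1}^r c_i\,\delta_{x(i)}$ and $\supp\mu=\{x(1),\dots,x(r)\}$ consists of exactly $r$ distinct points. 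The normalization $y_0=1$ gives $\sum_i c_i=1$, so $\mu$ is a probability measure.

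Third, I would show $\supp\mu\subseteq\manif$ using the localizing constraints of \eqref{momk}. Since $t\le k$, each $M_s(h_iy)$ with $s\le t-d_{h_i}$ is a principal submatrix of $M_{k-d_{h_i}}(h_iy)=0$, hence itself zero, and likewise $M_s(g_jy)\succeq 0$ for $s\le t-d_{g_j}$. Moreover, writing $M_s(\mu)=V^T\mathrm{diag}(c_\ell)V$ with $V$ the matrix of monomials (up to degree $s$) evaluated at the atoms, and using that $\rank M_s(\mu)=\rank M_s(y)=r$ for all $s\in[t-d_{\manif},t]$ by the first step, the evaluation map $p\mapsto(p(x(\ell)))_{\ell=1}^r$ is already surjective on $\R[x]_{t-d_{\manif}}$; so for each atom there is $p_\ell$ of degree at most $t-d_{\manif}$ with $p_\ell(x(j))=\delta_{\ell j}$. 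Feeding $\vecc(p_\ell)$ into these localizing matrices — whose relevant entries are genuine moments of $\mu$, since $\deg(p_\ell^2 h_i)\le 2t$ and $\deg(p_\ell^2 g_j)\le 2t$ — yields $c_\ell\,h_i(x(\ell))=0$ and $c_\ell\,g_j(x(\ell))\ge 0$; as $c_\ell>0$, every atom lies in $\manif$.

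Finally, $\mu$ is a probability measure supported on $\manif$, hence admissible for the measure reformulation \eqref{prob}, so $\int f\,\di\mu\ge f_{\min}$; and since $t\ge\bar d\ge d_f$ we have $\deg f\le 2t$, whence $\int f\,\di\mu=f^Ty=f_{\mom}^k$ because $y$ is optimal. As \eqref{momk} relaxes \eqref{prob} (the moment sequence of a Dirac at any feasible point is admissible, so $f_{\mom}^k\le f(x)$ for every feasible $x$), we also get $f_{\mom}^k\le f_{\min}$, and therefore $f_{\mom}^k=f_{\min}$. I expect the main obstacle to be the flat extension theorem itself — building the commuting multiplication operators out of flatness and reading off the atoms and weights from their joint spectrum; a secondary but delicate point is the degree bookkeeping in the support step, where one must use the precise form of \eqref{flat_trunc_cond} (rank stabilization already from level $t-d_{\manif}$) to ensure the interpolating polynomials $p_\ell$ have low enough degree for the level-$k$ localizing constraints to bite.
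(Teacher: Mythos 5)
The paper does not prove this statement; it imports it directly from the cited reference (Laurent's survey, Theorem 6.18). Your reconstruction — rank stabilization from level $t-d_{\manif}$, the Curto--Fialkow flat extension theorem to extract the $r$-atomic measure, interpolating polynomials of degree at most $t-d_{\manif}$ fed into the localizing matrices to place the atoms in $\manif$, and the sandwich $f_{\min}\le\int f\,\di\mu=f^Ty=f_{\mom}^k\le f_{\min}$ — is correct and is precisely the standard argument given in that reference, so there is nothing to compare against within the paper itself.
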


The points $x(i)$ in the statement above are precisely the minimizers of \eqref{POP}. There is a numerical linear algebra based algorithm to extract these minimizers, see \cite{henrion_detecting_2005} or \cite[Section 6.7]{laurent_survey_2009}.

We now present the result that will be one of the main tools to proving our result, Theorem \ref{main_result}. 

\begin{thm}[{\cite[Theorem 3.2]{huangnie_newflatness_2024}}]\label{KKTfiniteconvergence}
Consider the polynomial optimization problem \eqref{POP}, and suppose $(h)+\QM(g)$ is archimedean. If the linear independence constraint qualification condition \eqref{LIC}, the strict complementary condition \eqref{SCC}, and the second order sufficiency condition \eqref{SOSC} hold at every global minimizer of \eqref{POP}, then the moment-SOS hierarchy applied to \eqref{POP} has finite convergence. 

Furthermore, every solution of the moment relaxation \eqref{momk} satisfies the flat truncation condition for a sufficiently high order of the hierarchy $k$.
\end{thm}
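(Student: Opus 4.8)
The plan is to adapt the optimality-conditions argument of Nie \cite{nie_localopt_2014}, sharpening its flat-truncation step. Normalise so that $f_{\min}=0$. Since $(h)+\QM(g)$ is archimedean the feasible set $\manif$ is compact. At every global minimizer $u$ condition \eqref{LIC} holds by hypothesis, so Proposition \ref{local_min_conditions} gives that \eqref{FOOC}, \eqref{CC} and \eqref{SONC} hold there; together with the hypothesised \eqref{SCC} and \eqref{SOSC}, the second part of Proposition \ref{local_min_conditions} shows each global minimizer is a \emph{strict} local minimum, hence an isolated point of the set of global minimizers. That set is closed (continuity of $f$, closedness of $\manif$) and bounded, hence compact, so it is finite; write it $\{u^{(1)},\dots,u^{(r)}\}$. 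On the compact set $\manif\setminus\bigcup_\ell B_{\varepsilon_0}(u^{(\ell)})$ the continuous function $f$ is strictly positive, so $f\ge\varepsilon>0$ there for some $\varepsilon$.

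The second step is a local normal form at each $u^{(\ell)}$. Using \eqref{LIC} and the implicit function theorem I would pass to local coordinates $(t_1,\dots,t_N)$ centred at $u^{(\ell)}$ in which the equalities $h_i$ cut out a coordinate subspace and the active inequalities $g_j$, $j\in J(u^{(\ell)})$, become the coordinate functions $t_1,\dots,t_s$ (so that, locally, $\manif=\{t_1\ge 0,\dots,t_s\ge 0\}$ inside that subspace), while the inactive $g_j$ stay strictly positive after shrinking. Taylor expansion then yields $f=\sum_{j=1}^{s}c_j t_j+Q(t_{s+1},\dots,t_N)+(\text{higher order})$, where each $c_j>0$ — these are the Lagrange multipliers of $u^{(\ell)}$, positive by \eqref{SCC} — and $Q$ is a positive-definite quadratic form, being the Hessian of the Lagrangian restricted to the tangent space of the active constraints, positive definite by \eqref{SOSC}. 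This is exactly Marshall's boundary Hessian condition at $u^{(\ell)}$.

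The third step, which I expect to be the main obstacle, is to patch these local data into a single degree-bounded representation. Invoking Marshall's representation theorem under the boundary Hessian conditions — equivalently, Scheiderer's local–global principle for archimedean quadratic modules applied to the local sum-of-squares normal forms of the previous step together with the uniform bound $f\ge\varepsilon$ away from the minimizers — one obtains a finite $k_0$ with $f\in (h)_{2k_0}+\QM(g)_{2k_0}$ (recall $f_{\min}=0$). Hence $f_{\sos}^{k_0}=f_{\min}$, and since $f_{\sos}^k$ is nondecreasing in $k$ and $f_{\sos}^k\le f_{\mom}^k\le f_{\min}$ for all $k$ (weak duality between \eqref{SOSk} and \eqref{momk}, and $f_{\mom}^k\le f_{\min}$ via the Dirac measure at a minimizer), we get $f_{\sos}^k=f_{\mom}^k=f_{\min}$ for every $k\ge k_0$: finite convergence.

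For the flat-truncation claim, fix $k\ge k_0$ and let $y$ be any optimizer of \eqref{momk}, so $f^{\mathsf T}y=f_{\min}$. Since $\manif$ is compact, the truncations of all such $y$, over all $k$, lie in a fixed compact set; along any sequence $k_\nu\to\infty$ a subsequence of optimizers converges entrywise on each fixed truncation level to a full moment sequence of a probability measure $\mu^\star$ on $\manif$ with $\int f\,\di\mu^\star=f_{\min}$, hence with $\supp\mu^\star\subseteq\{u^{(1)},\dots,u^{(r)}\}$ — a finitely atomic measure. For moment sequences of such measures the Curto–Fialkow flat-extension theorem forces $\rank M_{t-d_{\manif}}(y)=\rank M_t(y)$ for all large $t$. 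Upgrading this from the limit to \emph{every} optimizer $y$ at all sufficiently large orders $k$, without the auxiliary hypothesis needed in \cite{nie_localopt_2014}, is the content of the Huang–Nie refinement; once it is established, Theorem \ref{flat_trunc_thm} applies and yields finite convergence of the moment hierarchy together with the atomic representing measure, completing the proof.
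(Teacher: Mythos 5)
The paper does not prove this statement at all: it is quoted verbatim as \cite[Theorem 3.2]{huangnie_newflatness_2024} and used as a black box, so there is no internal proof to compare against. Judged on its own terms, your reconstruction of the \emph{first} half (finite convergence) follows the standard Nie--Marshall route and is essentially sound in outline: LIC, SCC and SOSC give finitely many global minimizers, the local normal form you describe is exactly Marshall's boundary Hessian condition, and Marshall's representation theorem (via Scheiderer's local--global principle, using archimedeanity and $f\geq\varepsilon$ away from the minimizers) yields $f-f_{\min}\in (h)_{2k_0}+\QM(g)_{2k_0}$, whence $f_{\sos}^{k}=f_{\mom}^{k}=f_{\min}$ for $k\geq k_0$ by weak duality. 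This is the argument of \cite{nie_localopt_2014}.

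The genuine gap is the ``Furthermore'' clause, which is precisely the new content of \cite{huangnie_newflatness_2024} and is not established by your argument. Your limiting-measure step does not work as stated: the flat truncation condition \eqref{flat_trunc_cond} is a rank equality between two \emph{finite} truncations of a \emph{fixed} optimizer $y$ of \eqref{momk} at a \emph{fixed} order $k$, and entrywise convergence of a subsequence of optimizers to the moment sequence of a finitely atomic measure controls neither the ranks of the finite-$k$ matrices (rank is only lower semicontinuous, so the approximating optimizers may have strictly larger rank at every level) nor all optimizers (only the extracted subsequence). You then write that upgrading the conclusion to every optimizer at all sufficiently large $k$ ``is the content of the Huang--Nie refinement; once it is established\dots'' --- but that upgrade \emph{is} the statement to be proven, so the proposal is circular at exactly the point where new work is needed. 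The actual argument in \cite{huangnie_newflatness_2024} proceeds differently: one substitutes the degree-$2k_0$ representation of $f-f_{\min}$ into the complementarity relation $\langle f-f_{\min},y\rangle=0$ for an optimizer $y$ of \eqref{momk}, deduces kernel constraints on $M_k(y)$ and the localizing matrices forcing the truncated sequence to be concentrated on the finite set of minimizers, and from this derives the rank stabilization for $k$ large --- none of which appears in your sketch. As written, the proposal proves only the first assertion of the theorem.
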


The theorem above is a strengthening of the result \cite[Theorem 1.1]{nie_localopt_2014}, both of which build off work by Marshall, see \cite[Chapter 9.5]{marshall_positive_2008}.

In \cite{huang_homosphere_2023}, the polynomial optimization problem of a generic homogeneous polynomial as the objective function on the unit sphere is studied. There, \cite[Theorem 1.1]{nie_localopt_2014} is used to show finite convergence of the moment-SOS hierarchy.

\begin{thm}[{\cite[Corollary 3.5]{huang_homosphere_2023}}]\label{sphere_finite}
Let $f$ be a generic homogeneous polynomial. For the following polynomial optimization problem of minimizing $f$ on the unit sphere
\begin{equation}\label{sphere}
	\begin{aligned}
		f_{\min} = \;&\inf  \; \; f (x)\\
		&\st \; x \in \mathbb{S}^{n-1},
	\end{aligned}
\end{equation}
the moment-SOS has finite convergence.
\end{thm}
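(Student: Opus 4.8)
The plan is to derive Theorem~\ref{sphere_finite} from Theorem~\ref{KKTfiniteconvergence} by verifying that its local optimality hypotheses hold generically. In \eqref{sphere} there is a single equality constraint $h_1(x) = 1 - \|x\|^2$ and no inequality constraints, so $\bar m = 0$ and the strict complementary condition \eqref{SCC} is vacuous; the sum $(h_1) + \Sigma[x]$ is archimedean because $1 - \|x\|^2 = h_1 \in (h_1)$; and $\nabla_x h_1(u) = -2u \neq 0$ for every $u \in \sph^{n-1}$, so the linear independence constraint qualification \eqref{LIC} holds at every feasible point with no genericity needed. Hence, by Theorem~\ref{KKTfiniteconvergence}, it suffices to show that for $f$ outside a Lebesgue-null subset of the space $\R[x]_{=d}$ of homogeneous forms of degree $d = \deg f$, the second order sufficiency condition \eqref{SOSC} holds at every global minimizer of \eqref{sphere}.

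I would obtain \eqref{SOSC} by showing that, for generic $f$, the restriction $f|_{\sph^{n-1}}$ is a \emph{Morse function} on the sphere, i.e., all of its critical points are nondegenerate. Indeed, a global minimizer $u$ is in particular a local minimizer satisfying \eqref{LIC}, so by Proposition~\ref{local_min_conditions} it satisfies \eqref{FOOC} and \eqref{SONC}; the former says $\nabla_x f(u) \parallel u$, so $u$ is a critical point of $f|_{\sph^{n-1}}$, and the latter says that the Hessian of the Lagrangian \eqref{lagrangian_function} is positive semidefinite on $\nabla_x h_1(u)^\perp = u^\perp = T_u\sph^{n-1}$. If $u$ is moreover a nondegenerate critical point, this restricted Hessian is nonsingular, hence positive definite on $T_u\sph^{n-1}$, which is precisely \eqref{SOSC}. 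Thus everything reduces to the (classical in flavour) statement that a generic homogeneous form restricts to a Morse function on $\sph^{n-1}$.

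This last statement I would establish by a parametric transversality argument, performed so as to respect the homogeneity of $f$. Set $V = \R[x]_{=d}$ and consider the polynomial bundle map $\Phi : V \times \sph^{n-1} \to T\sph^{n-1}$ taking $(f,x)$ to the Riemannian gradient $\bigl(x,\ (I - xx^{T})\nabla_x f(x)\bigr)$ (using $\|x\| = 1$); for fixed $f$ its zeros are exactly the critical points of $f|_{\sph^{n-1}}$, and nondegeneracy of such a critical point is transversality of the section $\sigma_f = \Phi(f,\cdot)$ to the zero section $Z$ there. The crucial step is that $\Phi$ is transverse to $Z$: for a fixed $x \in \sph^{n-1}$ and an arbitrary target $v \in T_x\sph^{n-1} = x^\perp$, the form $f_v(y) = (v^{T}y)(x^{T}y)^{d-1} \in V$ has $\nabla_x f_v(x) = v$ (since $v^{T}x = 0$ and $\|x\| = 1$), hence $\sigma_{f_v}(x) = v$; so the derivative of $\Phi$ in the $V$-direction alone already surjects onto each fiber. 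The parametric transversality theorem then gives that for $f$ outside a Lebesgue-null (indeed a proper algebraic) subset of $V$, the section $\sigma_f$ is transverse to $Z$, i.e., $f|_{\sph^{n-1}}$ is Morse. Combining with the reduction above and Theorem~\ref{KKTfiniteconvergence} yields finite convergence for all such $f$.

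The main obstacle is exactly this homogeneity constraint in the transversality step: one is not free to perturb $f$ by an arbitrary polynomial --- in particular no linear or low-degree correction is admissible --- so the usual ``add a generic linear functional'' device is unavailable, and one must instead produce, at each point $x$ and each tangent direction $v$, an explicit degree-$d$ form with prescribed sphere-gradient at $x$; the family $f_v(y) = (v^{T}y)(x^{T}y)^{d-1}$ does the job, but one must check the computation at actual critical points and that it is uniform enough to feed the parametric transversality theorem. A secondary point requiring care is the passage from \eqref{SONC}, which holds automatically at every global minimizer since \eqref{LIC} is automatic, to \eqref{SOSC}, where Morse nondegeneracy is the active ingredient, together with the bookkeeping that genericity in $\R[x]_{=d}$ is the intended notion (and, as the remark after Theorem~\ref{main_result} notes, enlarging the admissible class of $f$ only helps). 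A purely algebraic alternative to invoking Sard's theorem, closer to the projective-elimination approach of \cite{huang_homosphere_2023}, is to observe that the $f$ with a degenerate critical point on $\sph^{n-1}$ lie in the zero set of a discriminant in the coefficients of $f$; the transversality computation above is precisely what shows this discriminant is not identically zero, hence that the bad set is Lebesgue-null.
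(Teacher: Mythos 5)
Your proposal is correct and follows essentially the same route as this paper's proof of the general Theorem \ref{main_result} specialized to $m=1$: reduce to Theorem \ref{KKTfiniteconvergence}, observe that \eqref{LIC} and \eqref{SCC} are automatic and the quadratic module is archimedean, and obtain \eqref{SOSC} generically by showing that a generic form restricts to a Morse function on the sphere via a parametric transversality/Sard argument (the paper itself only cites this statement from Huang, whose original proof instead uses projective elimination theory). The one difference is in verifying surjectivity of the coefficient-to-cotangent map: you exhibit the explicit form $(v^{T}y)(x^{T}y)^{d-1}$ with prescribed sphere-gradient at every point $x$, whereas the paper checks surjectivity only at the special point $\pn$ using monomials and then transports it to arbitrary points by a Householder rotation (Lemmas \ref{Phi_pn_surjective_lemma}--\ref{surjectivity_lemma}); your direct construction is slightly cleaner for a single sphere, though it does not extend verbatim to the multihomogeneous setting where the paper needs the rotation argument.
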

	
Note that problem \eqref{POPmultisphere} on the product of spheres generalizes problem \eqref{sphere}. Thus, our aim is to generalize Theorem \ref{sphere_finite}, by showing that the moment-SOS hierarchy applied to the polynomial optimization problem \eqref{POPmultisphere} has finite convergence. For this, we use Theorem \ref{KKTfiniteconvergence}.
	
\subsection{Application to Tensors}\label{tensorapp}

In this subsection, we use the polynomial optimization problem \eqref{POPmultisphere} to model a problem in tensor optimization, as discussed in \cite{nie_rankone_2014}. We introduce the necessary background on tensor theory, references of which include \cite{comon_symtensors_2008,hogben_linalghandbook_2014,landsberg_tensors_2012}.

For $n_1,\dots, n_m \in \N$, we define a \textit{real tensor $\mathcal{A}$ of order m}, as an element of the space $\R^{n_1\times \dots \times n_m}$. While tensors may have entries from any field, we consider tensors with real entries and remark about tensors with complex entries below at the end of the section.

For two tensors $\mathcal{A}, \mathcal{B} \in \R^{n_1\times \dots \times n_m}$, the \textit{Hilbert Schmidt inner product} is given by
\[\langle \mathcal{A},\mathcal{B}\rangle = \sum_{j_1=1}^{n_1} \; \dots \sum_{j_m=1}^{n_m} \; \mathcal{A}_{j_1\dots j_m}\; \mathcal{B}_{j_1\dots j_m}.\]
The inner product is symmetric, i.e., $\langle \mathcal{A} ,\mathcal{B} \rangle = \langle \mathcal{B}, \mathcal{A}\rangle$. We define the \textit{Hilbert Schmidt norm} as,
\[\|\mathcal{A}\|^2 = \langle\mathcal{A},\mathcal{A}\rangle = \sum_{j_1=1}^{n_1} \; \dots \sum_{j_m=1}^{n_m} \; |\mathcal{A}_{j_1\dots j_m}|^2.\]

\begin{defn}
We say a tensor $\rkone \in \R^{n_1\times \dots \times n_m}$ is \textit{rank one}, if it can be written as the outer product of unit vectors $\bm{u}_i \in \R^{n_i}$ times a scalar $a\in \R \setminus \{0\}$, i.e.,
\begin{equation}\label{rankonetensor}
	\rkone = a \bm{u}_1 \otimes \dots \otimes \bm{u}_m, \; a\in \R \setminus \{0\}, \; \|\bm{u}_i\| = 1 \;\text{for all}\; i \in [m].
\end{equation}
\end{defn}

We can associate a polynomial to a tensor $\mathcal{A} \in \R^{n_1\times \dots \times n_m}$ through the inner product, 
\begin{equation}\label{tensor_poly}
    \mathcal{A}(\bm{x}) = \;\langle \mathcal{A}, \bm{x}_1 \otimes \dots \otimes \bm{x}_m \rangle \; = \;\sum_{j_1=1}^{n_1} \dots \sum_{j_m =1}^{n_m}\; \mathcal{A}_{j_1\dots j_m} \; x_{1 j_1} \dots x_{m j_m}.
\end{equation}
Note that, the polynomial above is multilinear in the vectors $\bm{x}_1,\dots , \bm{x}_m$, i.e., with the notation in the introduction $\mathcal{A}(\bm{x}) \in \R[\bm{x}]_{=(1,\dots,1)}$.
We give an example of this notation.
\begin{ex}
	Consider the following real tensor $\mathcal{A} \in \R^{2\times 3 \times 2}$
	\begin{equation}\label{arbtensorexample}
	\mathcal{A} = \left.
	\left(\begin{matrix}
		2 &5&3\\
		-1&3&0
	\end{matrix}\; \; \right\vert \;\;
	\begin{matrix}
		-8&4&1\\
		7&-6&-2
	\end{matrix}\right),
\end{equation} 
where we write the tensor slice by slice over the third index, i.e., the entries in first block on the left have their third index equalling one. We set 
\[\bm{x}_1 = \left(\begin{matrix} x_{11} \\ x_{12}\end{matrix}\right), \qquad \bm{x}_2 = \left(\begin{matrix} x_{21} \\ x_{22} \\ x_{23}\end{matrix}\right), \qquad \bm{x}_3 = \left(\begin{matrix} x_{31} \\ x_{32}\end{matrix}\right).\]
Then, from \eqref{tensor_poly}, we have the following multilinear polynomial,
\[
\begin{aligned}
\langle \mathcal{A},\bm{x}_1 \otimes \bm{x}_2 \otimes \bm{x}_3 \rangle &= \sum_{j_1 = 1 }^{2} \; \sum_{j_2=1}^{3} \; \sum_{j_3=1}^{2} \mathcal{A}_{j_1 j_2 j_3} \; x_{1j_1} \; x_{2j_2} \; x_{3j_3} \\
&= 2 x_{11} x_{21} x_{31} - 8x_{11} x_{21} x_{32} + 5 x_{11} x_{22} x_{31} + 4 x_{11} x_{22} x_{32} \\
&\;\;\; + 3 x_{11} x_{23} x_{31} + x_{11} x_{23} x_{32} -  x_{12} x_{21} x_{31} +7 x_{12} x_{21} x_{32}\\
&\;\;\; + 3 x_{12} x_{22} x_{31} -6 x_{12} x_{22} x_{32} -2 x_{12} x_{23} x_{32}.
\end{aligned}
\]
\end{ex} 

Given a tensor $\mathcal{A} \in \R^{n_1\times \dots \times n_m}$, we consider the problem of finding the best rank one approximation of $\mathcal{A}$. Setting $\rkonex = a \,\bm{x}_1 \otimes \dots \otimes \bm{x}_m$ as an indeterminate rank one tensor, we formulate the problem as,
\begin{equation}\label{bestrankone}
\begin{aligned}
& \min \;\; &&\|\mathcal{A} - \mathcal{X}\|^2 \\
&\st\; &&\rank(\mathcal{X})= 1 \\
&\; &&\mathcal{X} \in \R^{n_1\times \dots \times n_m}.
\end{aligned}
\end{equation}
We use the notions above to simplify the objective function of \eqref{bestrankone}:
\[ \|\mathcal{A} - \mathcal{X}\|^2 = \langle \mathcal{A} - \mathcal{X}, \mathcal{A} - \mathcal{X}\rangle = \|\mathcal{A}\|^2 - 2 a \langle \mathcal{A},\bm{x}_1 \otimes \dots \otimes \bm{x}_m\rangle + a^2.\]
Treating $a$ as a variable, we have a quadratic univariate polynomial, which attains its minimum at $a = \langle \mathcal{A},\bm{x}_1 \otimes \dots \otimes \bm{x}_m\rangle$. This value of $a$ gives the minimum value of the univariate quadratic as $\|\mathcal{A}\|^2 - a^2$. This means that to minimize the value of the problem \eqref{bestrankone}, we need to maximize the value of $|a|$. With this, we can eliminate $a$ from \eqref{bestrankone} to get the equivalent problem,
\begin{equation}\label{bestrankonesimple}
\begin{aligned}
&\max \;\; | \langle \mathcal{A},\bm{x}_1 \otimes \dots \otimes \bm{x}_m\rangle| \\
& \st \; \bm{x} = (\bm{x}_1,\dots, \bm{x}_m) \in \mathbb{S}^{n_1-1}\times \dots \times \mathbb{S}^{n_m-1}.
\end{aligned}
\end{equation}
By the association of multihomogeneous polynomials to tensors, the problem \eqref{bestrankonesimple} becomes a pair of polynomial optimization problems,
\begin{equation}\label{pairofPOPs}
\begin{aligned}
 	a^+ = &\max \;\; \mathcal{A}(\bm{x}) \quad \qquad \qquad\qquad a^- = \hspace{-3mm} &&\min \;\; \mathcal{A}(\bm{x}) \\
 	&\st\;\; \bm{x}_i \in \sph^{n_i-1} &&\st\;\; \bm{x}_i \in \sph^{n_i-1}\\
 	&\qquad \quad i \in [m], &&\qquad \quad i \in [m].
\end{aligned}
\end{equation}
Due to the absolute value in \eqref{bestrankonesimple}, the maximal value of the problem \eqref{bestrankonesimple} is $\max\{|a^+|,|a^-|\}$. 

Thus, the tensor optimization problem of finding the best rank one approximation of a given tensor $\mathcal{A} \in \R^{n_1\times \dots \times n_m}$ can be modeled by the polynomial optimization problem \eqref{POPmultisphere}. With this, we have the following corollary of Theorem \ref{main_result}.

\begin{cor}\label{tensor_finite_converg}
Let $\mathcal{A} \in \R^{n_1\times \dots \times n_m}$ be a generic real tensor. For the reformulation of the problem of finding the best rank one approximation of $\mathcal{A}$ in \eqref{pairofPOPs}, the moment-SOS hierarchy has finite convergence. Furthermore, the optimality certificate of flat truncation for the moment hierarchy holds for a sufficiently high order of the hierarchy.
\end{cor}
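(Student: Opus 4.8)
The plan is to deduce the statement directly from Theorem~\ref{main_result} by identifying tensors with multilinear polynomials and by reducing the maximization problem to a minimization problem. The starting point is the observation, already made around \eqref{tensor_poly}, that the assignment $\mathcal{A} \mapsto \mathcal{A}(\bm{x})$ is linear and sends the tensor basis element with a single nonzero entry in position $(j_1,\dots,j_m)$ to the monomial $x_{1j_1}\cdots x_{mj_m}$; these monomials are exactly a basis of $\R[\bm{x}]_{=(1,\dots,1)}$, and their number equals $\dim \R^{n_1\times\cdots\times n_m} = \prod_i n_i$. Hence $\mathcal{A}\mapsto\mathcal{A}(\bm{x})$ is a linear isomorphism of finite-dimensional real vector spaces, so both it and its inverse are continuous and in particular send Lebesgue-null sets to Lebesgue-null sets in either direction. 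Consequently ``$\mathcal{A}$ is a generic tensor'' and ``$\mathcal{A}(\bm{x})$ is a generic element of $\R[\bm{x}]_{=(1,\dots,1)}$'' are equivalent statements. Since we assume $n_i \geq 2$ for all $i\in[m]$, the multidegree $(1,\dots,1)$ is an admissible multidegree in the setup of Theorem~\ref{main_result}.

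First I would treat the minimization problem. Applying Theorem~\ref{main_result} to the generic multihomogeneous polynomial $f = \mathcal{A}(\bm{x})$ of multidegree $(1,\dots,1)$ shows that, for all $\mathcal{A}$ outside a Lebesgue-null subset $Z^-$ of $\R^{n_1\times\cdots\times n_m}$, the moment-SOS hierarchy applied to $a^- = \min\{\mathcal{A}(\bm{x}) : \bm{x}\in \mathbb{S}^{n_1-1}\times\cdots\times\mathbb{S}^{n_m-1}\}$ has finite convergence, and flat truncation holds at a sufficiently high order of the hierarchy.

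Next I would treat the maximization problem. Here one uses that $a^+ = \max \mathcal{A}(\bm{x}) = -\min\,(-\mathcal{A})(\bm{x})$, that $(-\mathcal{A})(\bm{x}) = -\mathcal{A}(\bm{x}) \in \R[\bm{x}]_{=(1,\dots,1)}$, and that the moment and SOS relaxations attached to the maximization of $\mathcal{A}(\bm{x})$ coincide, up to an overall sign, with those attached to the minimization of $(-\mathcal{A})(\bm{x})$ — this is the standard way the hierarchy of Definition~\ref{mom/sos} is applied to a maximization problem. Since $\mathcal{A}\mapsto-\mathcal{A}$ is a measure-preserving linear bijection, $-\mathcal{A}$ is generic precisely when $\mathcal{A}$ is, so Theorem~\ref{main_result} applied to $(-\mathcal{A})(\bm{x})$ produces a Lebesgue-null set $Z^+ = -Z^-$ outside of which the moment-SOS hierarchy for $a^+$ has finite convergence with flat truncation. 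Taking $\mathcal{A}\notin Z^+\cup Z^-$, which is again a Lebesgue-null set, gives both conclusions simultaneously, which is exactly the assertion for the pair \eqref{pairofPOPs}.

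Since this is a corollary of Theorem~\ref{main_result}, no genuinely hard step is expected; the only points requiring care are the bookkeeping ones, namely verifying that the tensor-to-polynomial correspondence is a measure-class-preserving isomorphism onto $\R[\bm{x}]_{=(1,\dots,1)}$ so that the genericity hypothesis transfers, checking that the multidegree $(1,\dots,1)$ is covered by Theorem~\ref{main_result} under the standing assumption $n_i\geq 2$, and making the min/max reduction explicit so that a single exceptional null set controls both problems in \eqref{pairofPOPs}.
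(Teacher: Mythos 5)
Your proposal is correct and follows essentially the same route as the paper, which states the corollary as an immediate consequence of Theorem~\ref{main_result} after establishing the reduction to the pair of problems \eqref{pairofPOPs}; the paper leaves implicit exactly the bookkeeping you make explicit (the linear isomorphism $\mathcal{A}\mapsto\mathcal{A}(\bm{x})$ onto $\R[\bm{x}]_{=(1,\dots,1)}$ transferring genericity, and handling the maximization via $-\mathcal{A}$ with a union of two null sets). No gaps.
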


\begin{rmk}
When the tensor $\mathcal{A}$ is symmetric the associated polynomial is homogeneous, see \cite[Section 3]{comon_symtensors_2008} for definitions and details. Moreover, the best rank one approximation of a symmetric tensor can be chosen to be symmetric, see \cite[Theorem 1]{friedland_rankonesym_2013}. In that case the polynomial optimization problem consists of optimizing a homogeneous polynomial over the unit sphere. Thus, for a symmetric tensor, the statement above is a corollary of Theorem \ref{sphere_finite}. 
\end{rmk}

\begin{rmk}
Suppose $(\bm{x}^*_1,\dots, \bm{x}^*_m) \in \sph^{n_1-1} \times \dots \times \sph^{n_m-1}$ is a maximizer of \eqref{bestrankonesimple}, and set $a^* = \langle \mathcal{A},\bm{x}^*_1 \otimes \dots \otimes \bm{x}^*_m\rangle$, then the best rank one tensor approximating the tensor $\mathcal{A}$ is the tensor $\rkonex^* = a\, \bm{x}^*_1 \otimes \dots \otimes \bm{x}^*_m$. Note that, to get the best rank one approximation tensor, one needs to extract the maximizers of problem \eqref{bestrankonesimple}. This is facilitated by the guarantee of the optimality certification mentioned in Corollary \ref{tensor_finite_converg}, see Section \ref{mom/sos_section} for details.
\end{rmk}

\begin{rmk}\label{complex_sphere}
For tensors with complex entries, we can double the number of variables to obtain an equivalent real polynomial optimization formulation. For details about this procedure, see \cite[Section A.1]{gribling_complex_2022}.
\end{rmk}

\section{Morse Theory for the Working Optimizer}\label{morse_sec}
	
In this section, we introduce notions from differential geometry and Morse theory, which will be the tools to prove our main result in Section \ref{finiteconvergence_sec}. An effort was made to make this an accessible introduction to the nonexpert, so that this approach can be further applied in optimization contexts. For background on differential geometry, see \cite{lee_smoothmani_2013,tu_manifolds_2011}, with \cite[Chapter 1.7]{guillemin_difftop_2010} particularly relevant. For further reading on Morse theory, see \cite{nicolaescu_morse_2011}.
We use similar symbols to the previous sections, so the reader can anticipate their application.
	
\subsection{Basic Notions of Morse Theory}\label{morse_basics_section}

Given an open set $U \subseteq \R^n$, we say that a function $\varphi: U \to \R^m$ is \emph{smooth} if it is of class $C^{\infty}$.

\begin{defn}
A \textit{$k$ dimensional manifold} $\manif$ of $\R^n$ is a subset $\manif \subseteq \R^n$, such that for every point $p\in \manif$, there exists a neighbourhood $U\ni p$ of $\R^n$ and a smooth bijection $\varphi: U \rightarrow V\subset \R^n$ such that $\varphi^{-1}$ is smooth and
\[
\varphi(U\cap \manif) = V\cap \pi_k\left(\R^n\right) \subset \R^n.
\]
Here, $\pi_k\left(\R^n\right)= \big\{(a_1,\dots,a_k,0,\dots,0) \in \R^n\;|\; a_1,\dots,a_k \in \R\big\}$ is the projection onto the first $k$ coordinates.
\end{defn}

Note that some texts define a manifold as a standalone object, i.e., not embedded in $\R^n$. The definition above can be found under the definition of regular submanifold in, for example, \cite[Chapter 9]{tu_manifolds_2011}. 
%We define a \textit{submanifold} $\mathcal{M}$ of $\manif$, as a subset of $\manif$ which satisfies the definition of a manifold, for some dimension.

\begin{defn}\label{curve}
A \textit{curve} on a manifold $\manif$ is a smooth function $\gamma: (-a,a)\rightarrow \R^n$, with $a>0$, such that the image of $\gamma$ is fully contained in $\manif$. We denote the derivative or `velocity vector' of a curve $\gamma$, as $\dot{\gamma}: (-a,a)\rightarrow\R^n$. 
\end{defn}
This leads to the definition of the tangent space at a point in a manifold.
\begin{defn}\label{tanspace_def}
We say that a curve $\gamma$ on $\manif$ is \emph{centered at $p \in \manif$} if $\gamma(0) = p$. A vector $v \in \R^n$ is a \textit{tangent vector} at $p$ if there exists a curve $\gamma$ on $\manif$ centered at $p$ and such that $\dot{\gamma}(0) = v$. The \emph{tangent space at $p$}, denoted by $T_p \manif$, is the set of all tangent vectors at $p$,
\[
T_p \manif = \big\{ v\in \R^n \; | \; \text{there exists a curve $\gamma $ on} \; \manif \; \st \; \gamma(0)=p \; \text{and} \; \dot{\gamma}(0) = v \big\}.
\]
For $v\in T_p\manif$, we say $\gamma$ is a \textit{representing} curve for $v$ if $\gamma(0)=p$ and $\dot{\gamma}(0) = v$. In other words, $v$ is an equivalence class of curves and $\gamma$ is a representative of this class, where two curves are equivalent if their velocity vectors agree at $p$.
\end{defn}
There exists other, equivalent, definitions of the tangent space, see \cite[Chapter 8.1]{tu_manifolds_2011},\cite[Chapter 3]{lee_smoothmani_2013}, or \cite[Chapter 1.2]{guillemin_difftop_2010}. The following statement describes the tangent space as a vector subspace of $\R^n$, the ambient space of our manifold $\manif$.
\begin{prop}[{\cite[Proposition 3.10]{lee_smoothmani_2013}}]\label{tanspace_dim_prop}
Consider a manifold $\manif\subset \R^n$, and a point $p\in \manif$. The tangent space $T_p\manif$ is a subspace of $\R^n$ and has dimension equal to that of the manifold $\manif$. Thus, we have the following isomorphism of real vector spaces $T_p\manif \cong \R^{\dim(\manif)}$.
\end{prop}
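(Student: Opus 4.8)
The statement to prove is Proposition \ref{tanspace_dim_prop}: the tangent space $T_p\manif$ of a $k$-dimensional manifold $\manif \subset \R^n$ at a point $p$ is a linear subspace of $\R^n$ of dimension $k$. Let me sketch how I would prove it.

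\textbf{Approach.} The natural strategy is to use the defining chart of the manifold near $p$ to reduce to the model case of the linear subspace $\pi_k(\R^n)$, where the claim is transparent, and then transport the structure back via the diffeomorphism. The key is that a diffeomorphism $\varphi$ identifies curves on $\manif$ with curves on $\pi_k(\R^n)$, and its derivative $d\varphi_p$ is a linear isomorphism on $\R^n$ that carries $T_p\manif$ onto $T_{\varphi(p)}\pi_k(\R^n)$.

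\textbf{Key steps.}
\begin{enumerate}
\item Fix the chart $\varphi: U \to V$ with $\varphi(U\cap\manif) = V \cap \pi_k(\R^n)$ from the definition of manifold. First establish the model case: for the linear subspace $L := \pi_k(\R^n)$, show $T_qL = L$ for every $q \in L$. Indeed, any curve in $L$ through $q$ has velocity in $L$ (since the last $n-k$ coordinates are constantly zero, so their derivatives vanish), giving $T_qL \subseteq L$; conversely, for any $v \in L$, the straight line $t \mapsto q + tv$ is a curve in $L$ through $q$ with velocity $v$, giving $L \subseteq T_qL$.
\item Show that $\varphi$ induces a bijection between curves on $\manif$ centered at $p$ and curves on $V\cap L$ centered at $\varphi(p)$: if $\gamma$ is a curve on $\manif$ through $p$, then $\varphi\circ\gamma$ is a curve on $V\cap L$ through $\varphi(p)$ (its image lies in $\varphi(U\cap\manif)\subseteq V\cap L$), and conversely $\varphi^{-1}\circ\delta$ recovers a curve on $\manif$; both directions are smooth since $\varphi,\varphi^{-1}$ are smooth.
\item Apply the chain rule: $(\varphi\circ\gamma)\dot{\ }(0) = d\varphi_p(\dot\gamma(0))$. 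Hence $d\varphi_p$ maps $T_p\manif$ into $T_{\varphi(p)}(V\cap L) = T_{\varphi(p)}L = L$ (using step 1, shrinking $V$ does not change tangent vectors since they are local), and by the inverse map $d(\varphi^{-1})_{\varphi(p)}$ the reverse inclusion holds. So $d\varphi_p$ restricts to a bijection $T_p\manif \to L$. Since $d\varphi_p$ is a linear isomorphism of $\R^n$ (being the derivative of a diffeomorphism), $T_p\manif = (d\varphi_p)^{-1}(L)$ is a linear subspace of $\R^n$ of dimension $\dim L = k = \dim\manif$.
\item Conclude $T_p\manif \cong \R^{\dim(\manif)}$ as a choice of basis gives the isomorphism.
\end{enumerate}

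\textbf{Main obstacle.} The only subtle point is making precise that tangent vectors are a \emph{local} notion --- replacing $U$ by a smaller neighborhood, or $V$ by $V \cap L$, does not change which vectors arise as velocities of curves --- and carefully justifying that $d\varphi_p$ is a well-defined linear isomorphism on all of $\R^n$ (this is just the ordinary derivative of the smooth map $\varphi$ defined on the open set $U \subseteq \R^n$, invertible because $\varphi$ is a diffeomorphism, so $d\varphi_p \circ d(\varphi^{-1})_{\varphi(p)} = \id$). Everything else is routine application of the chain rule. This is exactly the content of \cite[Proposition 3.10]{lee_smoothmani_2013}, so one may alternatively just cite it, but the sketch above is the self-contained argument.
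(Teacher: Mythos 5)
Your proof sketch is correct: the paper gives no proof of this proposition and simply cites \cite[Proposition 3.10]{lee_smoothmani_2013}, and your chart-based argument (reduce to the model subspace $\pi_k(\R^n)$, transport curves through the diffeomorphism $\varphi$, and use that $d_p\varphi$ is a linear isomorphism of $\R^n$) is essentially the standard proof found in that reference. The local subtleties you flag (shrinking the curve's domain so its image stays in $U$, and invertibility of $d_p\varphi$ via the chain rule applied to $\varphi^{-1}\circ\varphi = \id$) are exactly the right points to be careful about, and your treatment of them is sound.
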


\begin{defn}\label{cotanspace_def}
For a point $p\in \manif$, we define the \emph{cotangent space} at $p$, denoted by $T^*_p\manif$, to be the algebraic dual of the tangent space $T_p\manif$.
\end{defn}

Since the tangent space is a finite dimensional real vector space, it and its dual are isomorphic as real vector spaces.

The central objects of study in differential geometry are smooth maps between manifolds. We consider smooth functions $f \colon \R^n \to \R^{n'}$ such that the image by $f$ of a manifold $\manif$ is included in another manifold $\manif'\subset \R^{n'}$. Suppose that $f$ is such a function. For a point $p\in \manif$, the map $f$ induces a linear map from the tangent $T_p\manif$ to $T_{f(p)}\manif'$. This comes from postcomposing a curve on $\manif$ with $f$ and applying the chain rule, as follows,
\begin{equation}\label{pushforward}
	(f\circ \gamma)' (0) = f'(\gamma(0)) \,\gamma'(0)= 
	\left(\begin{matrix}
	\frac{\partial f_1}{\partial x_1}(p) & \dots & 	\frac{\partial f_1}{\partial x_n}(p) \\
	\vdots & & \vdots \\
	\frac{\partial f_{n'}}{\partial x_1}(p) & \dots & 	\frac{\partial f_{n'}}{\partial x_n}(p)
	\end{matrix} \right) \; 
	\left(\begin{matrix}
	\dot{\gamma}(0)_1 \\
	\vdots \\
	\dot{\gamma}(0)_n
	\end{matrix}\right).
\end{equation}
This allows us to define the \textit{differential} or \textit{pushforward} of $f$, as the map $d_pf$ between the tangent spaces given by the Jacobian matrix of $f$ evaluated at a point $p$, which we denote by $J_pf$, i.e.,
\[
\begin{aligned}
d_p f \;:\; T_p\manif \; &\longrightarrow \; T_{f(p)}\manif' \\
v \; &\longmapsto \; J_pf\, v.
\end{aligned}
\]
Consider two smooth maps $f:\manif \rightarrow \manif'$ and $g:\manif' \rightarrow \manif''$. The differential of the composition $(g\circ f)$ can be computed by the chain rule,
\begin{equation}\label{chain_rule}
    d_p (g\circ f) = d_{f(p)}g \circ d_{p}f: T_p \manif \longrightarrow T_{(g\circ f) (p)}\manif''.
\end{equation}
For details see \cite[Chapter 1.2]{guillemin_difftop_2010} or \cite[Theorem 8.5]{tu_manifolds_2011}.

For our applications we are interested in the case $\manif'=\R$. This simplifies the picture, since the Jacobian is just the gradient. We have the following:
\begin{equation}
	\begin{aligned}
	f: \; \manif &\longrightarrow \; \R, \quad &&p  \longmapsto \; f(p)\\
	d_p f :\;  T_p\manif &\longrightarrow \; T_{f(p)}\R, \qquad &&v  \longmapsto \nabla_x f(p)^T \; v 
	%T^*_p\manif &\longleftarrow T^*_{f(p)} \R \;:\; d^*_pf, \qquad &&\ell\big(d_pf(\cdot)\big) \longmapsfrom \ell(\cdot).
	\end{aligned}
\end{equation}
	
\begin{rmk}\label{differential_functional}
	From Proposition \ref{tanspace_dim_prop}, we have that $T_q\R\cong \R$, for any $q\in\R$. Thus, the pushforward of $f$ at any point $p\in \manif$ is a linear functional on the tangent space $T_p\manif$. In other words, we have $d_p f \in T_p^*\manif$, for all $p \in \manif$.
\end{rmk}
	
\begin{defn}\label{critical_point}
We say a point $p\in \manif$ is a \textit{critical point of $f$ (on $\manif$)}, if for every curve $\gamma$ with $\gamma(0)=p$ on $\manif$, we have
\begin{equation}\label{critical_point_firstorder}
	(f\circ \gamma)' (0) = 0.
\end{equation}
Note that $f\circ\gamma: (-a,a) \rightarrow \R$, so the expression \eqref{critical_point_firstorder} can be written out as,
\begin{equation}\label{f_compose_gamma}
	(f\circ\gamma)'(0) = f(\gamma(0))' = \nabla_x f(\gamma(t))^T \dot{\gamma}(t) \; \big|_{t=0}= \sum_{i=1}^n \frac{\partial f}{\partial x_i}(p) \;\dot{\gamma}_i(0) = 0.
\end{equation}
\end{defn}
We compare the notion of criticality introduced in Subsection \ref{local_opt_cond_section} with the one above, in Subsection \ref{morse_lagrangian_section}. For now, Definition \ref{critical_point} can be thought of as a first order condition. In the next definition, we introduce what can be thought of as a second order condition.
	
\begin{defn}\label{nondeg_def}
Let  $p\in \manif$ be a critical point of $f$. Suppose
\begin{equation}\label{nondeg_expression}
	(f\circ \gamma)''(0)\neq 0,
	\end{equation}
for all curves $\gamma$ on $\manif$, such that $\gamma(0)=p$ and $\dot{\gamma}(0) \neq 0$. Then, we say $p$ is a \textit{nondegenerate} critical point. If all the critical points of $f$ are nondegenerate, we say $f$ is \textit{Morse (on $\manif$)}. Building off of \eqref{f_compose_gamma}, we expand \eqref{nondeg_expression} as
\begin{equation}\label{nondeg_expression_expanded}
\begin{aligned}
	(f\circ \gamma)''(0) &= \left(\sum_{i=1}^n \frac{\partial f}{\partial x_i}\big(\gamma(t)\big) \;\dot{\gamma}_i(t)\right)' \; \Big|_{t=0} \\
	&= \left(\sum_{j=1}^{n}\sum_{i=1}^n \frac{\partial^2 f}{\partial x_j\partial x_i}\big(\gamma(t)\big) \;\dot{\gamma}_i(t) \; \dot{\gamma}_j(t) \; +\; \sum_{i=1}^n \frac{\partial f}{\partial x_i}\big(\gamma(t)\big) \;\ddot{\gamma}_i(t)  \right)\; \Big|_{t=0} \\
	&= \dot{\gamma}(0)^T \nabla_x^2 f(p) \;\dot{\gamma}(0) \;+ \;\sum_{i=1}^n \frac{\partial f}{\partial x_i}(p) \;\ddot{\gamma}_i(0) \neq 0.
\end{aligned}
\end{equation}
\end{defn}

\subsection{Bridge to Classical Nonlinear Optimization}\label{morse_lagrangian_section}

In this subsection, we connect the notions of Subsections \ref{local_opt_cond_section} and \ref{morse_basics_section}. We consider a manifold defined as the vanishing set of real polynomials.
	
\begin{prop}\label{regular_level_set_thm}
Given polynomials $h_1,\dots,h_m \in \R[x]$, consider the set 
\[
\sphs = \{x\in \R^n \; | \; h_i(x)=0\; \text{for each} \; i \in [m]\}\subset \R^n.
\]
If $\sphs$ is nonempty and the set of gradient vectors $\{\nabla_x h_i(p)\}_{i=1}^m$ is linearly independent for all $p\in \sphs$, then $\sphs$ is a manifold of dimension $n-m$.
\end{prop}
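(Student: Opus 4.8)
The plan is to derive the statement from the Implicit Function Theorem, exactly as in the classical regular value theorem; since the $h_i$ are polynomials they are $C^\infty$, so all smoothness hypotheses are automatic. First I would fix an arbitrary point $p \in \sphs$ and collect the constraints into a single map $h = (h_1,\dots,h_m) \colon \R^n \to \R^m$. Its Jacobian $J_p h$ has the vectors $\nabla_x h_i(p)^T$ as its rows, so the linear independence hypothesis says precisely that $\rank J_p h = m$; in particular $m \le n$, which already makes $n-m$ a sensible dimension.

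Next I would pass to convenient coordinates. Because $J_p h$ has rank $m$, some choice of $m$ of its columns is linearly independent. Composing with a permutation of the coordinates of $\R^n$ — a linear automorphism of $\R^n$, hence its own diffeomorphism, which therefore does not affect whether $\sphs$ satisfies the chart condition in the definition of a manifold — I may assume these are the last $m$ columns. Writing $x = (y,z)$ with $y \in \R^{n-m}$, $z \in \R^m$ and $p = (y_0,z_0)$, this says that the $m \times m$ block $\frac{\partial h}{\partial z}(p)$ is invertible. The Implicit Function Theorem then yields open neighbourhoods $W \ni y_0$ in $\R^{n-m}$ and $W' \ni z_0$ in $\R^m$ together with a smooth map $g \colon W \to W'$ with $g(y_0) = z_0$ such that, for $(y,z) \in W \times W'$, one has $h(y,z) = 0$ if and only if $z = g(y)$.

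With this in hand I would exhibit the chart explicitly. Set $U = W \times W'$, an open neighbourhood of $p$ in $\R^n$, and define $\varphi \colon U \to \R^n$ by $\varphi(y,z) = (y,\, z - g(y))$. This map is smooth, and $\psi(y,w) = (y,\, w + g(y))$ is a smooth two-sided inverse on the open set $V := \varphi(U)$, so $\varphi$ is a diffeomorphism of $U$ onto $V$. For $(y,z) \in U$ we have $(y,z) \in \sphs$ if and only if $z = g(y)$, i.e.\ if and only if the last $m$ coordinates of $\varphi(y,z)$ vanish; hence $\varphi(U \cap \sphs) = V \cap \pi_{n-m}(\R^n)$. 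Since $p \in \sphs$ was arbitrary, $\sphs$ is a manifold of dimension $n-m$.

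The argument is essentially bookkeeping and there is no serious obstacle; the only points demanding a little care are (i) recording that a coordinate permutation of $\R^n$ transports the chart condition, so the "without loss of generality" step is legitimate, and (ii) checking that $\varphi$ and $\psi$ are genuinely mutually inverse on the open image $V$, rather than on all of $W \times \R^m$. One could alternatively just invoke the submersion/regular value theorem from \cite{lee_smoothmani_2013,tu_manifolds_2011}, but the construction above makes the chart required by the paper's definition completely explicit.
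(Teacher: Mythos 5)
Your proof is correct and is essentially the paper's argument with the black box opened: the paper simply invokes the Regular Level Set Theorem of \cite{tu_manifolds_2011}, and your Implicit Function Theorem construction is precisely the standard proof of that theorem, made explicit so as to produce the chart demanded by the paper's definition of a manifold. Both the coordinate-permutation step and the verification that $\varphi$ and $\psi$ are mutually inverse on the open image $V$ are handled correctly, so nothing is missing.
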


The proposition above is an easy consequence of the Regular Level Set Theorem, \cite[Theorem 9.9]{tu_manifolds_2011}, which states that the level set of a smooth map is a manifold, if its Jacobian has full rank on the whole level set. The assumption that the set of gradient vectors $\{\nabla_x h_i(p)\}_{i=1}^m$ is linearly independent ensures that the Jacobian has full rank on the level set $\sphs = h^{-1}(0) = (h_1^{-1}(0),\dots,h_m^{-1}(0))$.

Recall the Lagrangian from Subsection \ref{local_opt_cond_section}
\[
\mathcal{L}(x,\lambda) = f(x) - \sum_{i=1}^{m} \lambda_i h_i(x),
\]
which is a smooth function from $\R^{2n}$ to $\R$. The following lemma gives an equivalent formulation of the tangent space, in the context where the manifold is given by the vanishing locus of polynomials.
	
\begin{lem}\label{tanspace_lemma}
	With $\sphs$ as in Proposition \ref{regular_level_set_thm}, we have the following formulation of the tangent space at a point $p\in \sphs$ 
	\[T_p\sphs = \big\{v\in \R^n \; | \; \nabla_x h_i(p)^T v = 0,\; \; \text{for each}\; i \in [m]\big\}.\]
\end{lem}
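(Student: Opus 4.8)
The plan is to prove the two inclusions separately, using Proposition \ref{tanspace_dim_prop} (the dimension count) to upgrade one of them into an equality. Write $V_p = \{v \in \R^n \mid \nabla_x h_i(p)^T v = 0 \text{ for each } i \in [m]\}$ for the claimed tangent space; this is the intersection of the $m$ hyperplanes $\nabla_x h_i(p)^\perp$, and since the gradients $\{\nabla_x h_i(p)\}_{i=1}^m$ are linearly independent by the hypothesis of Proposition \ref{regular_level_set_thm}, $V_p$ has dimension exactly $n - m$.

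First I would show $T_p\sphs \subseteq V_p$. Take $v \in T_p\sphs$ and let $\gamma \colon (-a,a) \to \R^n$ be a representing curve, so $\gamma(0) = p$, $\dot\gamma(0) = v$, and the image of $\gamma$ lies in $\sphs$. Since $\sphs = h^{-1}(0)$, we have $h_i(\gamma(t)) = 0$ identically in $t$ for each $i \in [m]$. Differentiating at $t = 0$ and applying the chain rule exactly as in \eqref{f_compose_gamma} gives
\[
0 = \frac{\di}{\di t}\Big|_{t=0} h_i(\gamma(t)) = \nabla_x h_i(p)^T \dot\gamma(0) = \nabla_x h_i(p)^T v,
\]
so $v \in V_p$, proving the inclusion.

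For the reverse inclusion I would invoke Proposition \ref{tanspace_dim_prop}: by Proposition \ref{regular_level_set_thm}, $\sphs$ is a manifold of dimension $n - m$, so $T_p\sphs$ is a subspace of $\R^n$ of dimension $n - m$. We have just shown $T_p\sphs \subseteq V_p$ and noted $\dim V_p = n - m = \dim T_p\sphs$, so the two subspaces coincide. This dimension-counting shortcut is what makes the proof clean; the alternative --- constructing, for an arbitrary $v \in V_p$, an explicit curve in $\sphs$ through $p$ with velocity $v$ --- is the only genuinely nontrivial part of the statement, and it is precisely what the Regular Level Set Theorem (already invoked for Proposition \ref{regular_level_set_thm}) supplies under the hood via the submersion local normal form; so the main obstacle is entirely absorbed into results we are permitted to assume. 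I would simply remark that this surjectivity direction is standard (see \cite[Proposition 3.10]{lee_smoothmani_2013} or \cite[Chapter 9]{tu_manifolds_2011}) rather than reprove it.
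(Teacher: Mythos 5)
Your proposal is correct and follows essentially the same route as the paper: the inclusion $T_p\sphs \subseteq \mathcal{T}$ via differentiating $h_i \circ \gamma$ along a representing curve, followed by the dimension count using linear independence of the gradients and Proposition \ref{tanspace_dim_prop}. The paper even makes the same remark afterward about the alternative curve-construction argument for the reverse inclusion.
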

\begin{proof}
	We refer to the set on the right hand side as $\mathcal{T}$. Fix $v \in T_p\sphs$ and let $\gamma$ be a representing curve on $\mathcal{S}$ for $v$, i.e., $\gamma(0)=p$ and $\dot{\gamma}(0)=v$. We have $h_i(\gamma(t))=0$ for all $t \in (-a,a)$ and $i\in [m]$. Thus, for any $i\in [m]$, we have $(h_i \circ \gamma)'(t) = 0$. We expand this expression for $t=0$,
	\[0 = (h_i \circ \gamma)'(0) = h_i(\gamma(0))' = \sum_{i=1}^n \frac{\partial h}{\partial x_i}(p) \;\dot{\gamma}_i(0)=\nabla_x h_i(p)^T \dot{\gamma}(0).\]
	So we have that $v\in \mathcal{T}$. This proves the inclusion $T_p\sphs \subseteq \mathcal{T}$. Conversely, note that $\mathcal{T}=\bigcap_{i=1}^m \big(\nabla_x h_i(p)\big)^\perp$. Since the set of gradient vectors $\{\nabla_x h_i(p)\}_{i=1}^m$ is linearly independent, we have $\dim(\mathcal{T}) = n-m$, which, by Proposition \ref{tanspace_dim_prop}, finishes the proof.
\end{proof}
	
Note, that there exists proofs in the literature where the second containment is shown by constructing a curve to represent the tangent vector $v$, see \cite[Proposition 3.23]{lee_smoothmani_2013} or \cite[Proposition 8.16]{tu_manifolds_2011}.

The next two results relate the concept of criticality in the context above to that of Subsection \ref{local_opt_cond_section}. We start with the first order optimality condition \eqref{FOOC}. 

\begin{prop}\label{criticalpoint_lagrangian}
	With $\sphs$ as in Proposition \ref{regular_level_set_thm}, consider a smooth function $f$ from $\R^n$ to $\R$. A point $p\in \mathcal{S}$ is a critical point of $f$ if and only if there exist Lagrange multipliers $\lambda=(\lambda_1,\dots,\lambda_m)$ such that $\nabla_x \mathcal{L}(p,\lambda) =0$.
\end{prop}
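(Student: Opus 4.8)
The plan is to unwind both sides of the equivalence using the tangent space description from Lemma \ref{tanspace_lemma} and elementary linear algebra (orthogonal complements). First I would recall that, by Definition \ref{critical_point}, $p$ is a critical point of $f$ on $\sphs$ if and only if $(f\circ\gamma)'(0)=0$ for every curve $\gamma$ on $\sphs$ centered at $p$, which by \eqref{f_compose_gamma} reads $\nabla_x f(p)^T\dot\gamma(0)=0$. As $\gamma$ ranges over all such curves, $\dot\gamma(0)$ ranges over all of $T_p\sphs$ by Definition \ref{tanspace_def}. Hence $p$ is critical if and only if $\nabla_x f(p)^T v = 0$ for every $v\in T_p\sphs$, i.e. $\nabla_x f(p) \in (T_p\sphs)^\perp$.

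Next I would invoke Lemma \ref{tanspace_lemma}, which gives $T_p\sphs = \bigcap_{i=1}^m \bigl(\nabla_x h_i(p)\bigr)^\perp = \Span\{\nabla_x h_1(p),\dots,\nabla_x h_m(p)\}^\perp$. Taking orthogonal complements of both sides and using that $W^{\perp\perp}=W$ for a subspace $W$ of $\R^n$, we obtain $(T_p\sphs)^\perp = \Span\{\nabla_x h_1(p),\dots,\nabla_x h_m(p)\}$. Therefore $p$ is a critical point of $f$ on $\sphs$ if and only if $\nabla_x f(p)$ lies in this span, i.e. there exist scalars $\lambda_1,\dots,\lambda_m\in\R$ with $\nabla_x f(p) = \sum_{i=1}^m \lambda_i \nabla_x h_i(p)$.

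Finally I would observe that this last identity is exactly $\nabla_x \mathcal{L}(p,\lambda)=0$, since from \eqref{lagrangian_function} (with no inequality constraints) $\nabla_x \mathcal{L}(x,\lambda) = \nabla_x f(x) - \sum_{i=1}^m \lambda_i \nabla_x h_i(x)$. This closes the equivalence and completes the proof.

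There is essentially no hard step here: the only mild subtlety is making sure one is entitled to pass freely between "$\dot\gamma(0)$ ranges over all of $T_p\sphs$'' — which is the definition of the tangent space, so it is immediate — and the orthogonal-complement manipulation, which relies on the linear independence hypothesis of Proposition \ref{regular_level_set_thm} only insofar as it guarantees $\sphs$ is genuinely a manifold so that Lemma \ref{tanspace_lemma} applies; the identity $(T_p\sphs)^\perp = \Span\{\nabla_x h_i(p)\}$ itself holds regardless. I would keep the write-up to a few lines, citing Lemma \ref{tanspace_lemma} and Definitions \ref{critical_point} and \ref{tanspace_def}.
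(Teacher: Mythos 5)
Your proposal is correct and follows essentially the same route as the paper's proof: unwind criticality to $\nabla_x f(p)\in (T_p\sphs)^\perp$, identify $(T_p\sphs)^\perp$ with $\Span\{\nabla_x h_i(p)\}$ via Lemma \ref{tanspace_lemma}, and read off the Lagrange-multiplier identity. The only cosmetic difference is that you phrase both directions symmetrically through orthogonal complements, whereas the paper argues the forward direction and notes the steps are reversible.
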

\begin{proof}
	Fix $v\in T_p\mathcal{S}$ and let $\gamma$ be a representing curve on $\mathcal{S}$. If $p$ is critical, then, by definition, we have $(f\circ\gamma)'(0)=0$. By the computation in \eqref{f_compose_gamma}, this implies $\nabla_x f(p)^T v =0$. Since $v$ was arbitrary, we have that $\nabla_x f(p) \in T_p\mathcal{S}^\perp$. By Proposition \ref{tanspace_lemma}, this means that $\nabla_x f(p)$ is in the span of $\{ \nabla_x h_i(p) \; | \; i \in [m]\}$. Thus, there exists scalars $\lambda_1,\dots, \lambda_m$ such that $\nabla_x f(p) = \sum_{i=1}^{m} \lambda_i \nabla_x h_i (p)$, which gives $\mathcal{L}(p,\lambda) = 0$. All the steps are reversible, and the claim follows.
	\end{proof}

The next result relates the notion of nondegeneracy to the second order sufficiency condition \eqref{SOSC}.
	
\begin{prop}\label{criticalpoint_nondeg_lagrangian}
	With $\sphs$ as in Proposition \ref{regular_level_set_thm}, let $p\in \mathcal{S}$ be a critical point of $f$ with Lagrange multipliers $\lambda= (\lambda_1,\dots, \lambda_m)$. The point $p$ is nondegenerate if and only if $v^T \nabla_x^2 \mathcal{L}(p,\lambda)v \neq 0$, for all nonzero tangent vectors $v\in T_p\mathcal{S} \setminus \{0\}$.
\end{prop}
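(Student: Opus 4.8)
The plan is to reduce the whole statement to one algebraic identity: for \emph{any} curve $\gamma$ on $\sphs$ centered at $p$, the second derivative $(f\circ\gamma)''(0)$ depends only on the velocity $v=\dot\gamma(0)$ and in fact equals $v^T\nabla_x^2\mathcal{L}(p,\lambda)\,v$. Once this is established, both implications fall out immediately: nondegeneracy of $p$ (Definition \ref{nondeg_def}) says exactly that $(f\circ\gamma)''(0)\neq 0$ whenever $\dot\gamma(0)\neq 0$, and by the definition of the tangent space (Definition \ref{tanspace_def}) the velocities $\dot\gamma(0)$ of curves on $\sphs$ through $p$ range precisely over $T_p\sphs$.

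First I would start from the expansion already recorded in \eqref{nondeg_expression_expanded},
\[
(f\circ\gamma)''(0) \;=\; \dot\gamma(0)^T \nabla_x^2 f(p)\,\dot\gamma(0) \;+\; \nabla_x f(p)^T \ddot\gamma(0).
\]
The apparent obstacle is the second summand, which involves the acceleration $\ddot\gamma(0)$ and so is not a priori determined by $v$. The key move is to eliminate it using the constraints. Since $p$ is a critical point of $f$ with multipliers $\lambda$, Proposition \ref{criticalpoint_lagrangian} gives $\nabla_x f(p)=\sum_{i=1}^m \lambda_i \nabla_x h_i(p)$, hence $\nabla_x f(p)^T\ddot\gamma(0)=\sum_{i=1}^m \lambda_i\,\nabla_x h_i(p)^T\ddot\gamma(0)$. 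On the other hand, because the image of $\gamma$ lies in $\sphs$ we have $h_i(\gamma(t))\equiv 0$; differentiating twice and evaluating at $t=0$ yields
\[
\dot\gamma(0)^T \nabla_x^2 h_i(p)\,\dot\gamma(0) \;+\; \nabla_x h_i(p)^T \ddot\gamma(0) \;=\; 0,
\]
so that $\nabla_x h_i(p)^T\ddot\gamma(0) = -\,\dot\gamma(0)^T \nabla_x^2 h_i(p)\,\dot\gamma(0)$. Substituting back and recalling $\nabla_x^2\mathcal{L}(p,\lambda)=\nabla_x^2 f(p)-\sum_{i=1}^m \lambda_i \nabla_x^2 h_i(p)$, the acceleration terms cancel and we obtain $(f\circ\gamma)''(0)=v^T\nabla_x^2\mathcal{L}(p,\lambda)\,v$ with $v=\dot\gamma(0)$, as claimed.

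Finally I would conclude both directions. For the forward implication, assuming $p$ is nondegenerate, take any $v\in T_p\sphs\setminus\{0\}$ and a representing curve $\gamma$ (which exists by Definition \ref{tanspace_def}); then $v^T\nabla_x^2\mathcal{L}(p,\lambda)\,v=(f\circ\gamma)''(0)\neq 0$. For the converse, let $\gamma$ be any curve on $\sphs$ with $\gamma(0)=p$ and $\dot\gamma(0)\neq 0$; then $\dot\gamma(0)\in T_p\sphs\setminus\{0\}$ by Lemma \ref{tanspace_lemma}, so the hypothesis gives $(f\circ\gamma)''(0)=\dot\gamma(0)^T\nabla_x^2\mathcal{L}(p,\lambda)\,\dot\gamma(0)\neq 0$, i.e.\ $p$ is nondegenerate. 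The only genuine content is the cancellation of the $\ddot\gamma(0)$ terms via the second derivative of the constraint equations; the rest is bookkeeping with the definitions.
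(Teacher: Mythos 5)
Your proof is correct and follows essentially the same route as the paper: both reduce the statement to the identity $(f\circ \gamma)''(0) = v^T \nabla_x^2 \mathcal{L}(p,\lambda)\,v$ and then invoke Definitions \ref{tanspace_def} and \ref{nondeg_def}. The only (cosmetic) difference is how the acceleration term is eliminated: the paper first replaces $f$ by $\mathcal{L}(\cdot,\lambda)$ along the curve and kills the $\ddot{\gamma}(0)$ term via $\nabla_x \mathcal{L}(p,\lambda)=0$, whereas you differentiate the constraints $h_i\circ\gamma\equiv 0$ twice and substitute — the same two ingredients, rearranged.
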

\begin{proof}
Fix $v\in T_p\mathcal{S}\setminus \{0\}$ and let $\gamma$ be a representing curve on $\mathcal{S}$. Note that, $(f \circ \gamma)(t) = \mathcal{L}(\gamma(t), \lambda)$, since $h_i(q) = 0$ for all $q \in \mathcal{S}$ and $i\in [m]$. Thus, 
	\[
	(f\circ \gamma)''(0) = \left(\frac{\partial^2}{\partial t^2}\,\mathcal{L}(\gamma(t), \lambda) \right)\; \big|_{t=0}.
	\]
By a computation similar to that in \eqref{nondeg_expression_expanded}, we have 
	\[(f\circ \gamma)''(0) = v^T \nabla_x \mathcal{L}(p,\lambda) \; v  \; + \; \sum_{j=1}^{n} \left( \frac{\partial f}{\partial x_j} \big(p\big) \; - \; \sum_{i=1}^{m} \lambda_i\frac{\partial h_i}{\partial x_j}\big(p\big) \right) \ddot{\gamma}_j(0).\]
The second summand on the right hand side is zero since $p$ is a critical point. By the definition of nondegeneracy (Definition \ref{nondeg_def}) the claim follows.
\end{proof}

\subsection{Parametric Families}\label{morse_family_section}

In this subsection, we introduce results regarding genericity in differential geometry. We begin with Sard's classical Theorem about the critical points of a smooth map.

\begin{thm}[Sard's Theorem]\label{sard}
Let $f: U \rightarrow \R^n$ be a smooth map defined on an open set $U\subset \R^l$. Consider the set of critical points of $f$,
\[ 
\mathcal{C}_U =\{p \in U \;|\; \rank J_p f < n\}.
\]
Then the image $f(\mathcal{C}_U)\subset \R^n$ has Lebesgue measure zero.
\end{thm}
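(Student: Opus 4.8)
The plan is to prove Sard's theorem by the classical route: filter the critical set $\mathcal{C}_U$ by the order of vanishing of the derivatives of $f$, and induct on the source dimension $l$. I assume $n \geq 1$ (for $n = 0$ the statement is vacuous), with base case $l = 0$, where $U$ is at most a point and the image is trivially a null set. For the inductive step, write $C = \mathcal{C}_U$, and for each integer $i \geq 1$ let $C_i \subseteq C$ be the set of $p \in U$ at which every partial derivative of every component of $f$ of order between $1$ and $i$ vanishes. This gives a descending chain $C \supseteq C_1 \supseteq C_2 \supseteq \cdots$. Since a countable union of null sets is null, and since for any fixed $k$ one has $C = (C \setminus C_1) \cup \bigcup_{i=1}^{k-1}(C_i \setminus C_{i+1}) \cup C_k$, it suffices to establish: (a) $f(C \setminus C_1)$ is null; (b) $f(C_i \setminus C_{i+1})$ is null for each $i \geq 1$; and (c) $f(C_k)$ is null once $k$ is large enough that $n(k+1) > l$, which is always achievable because $f$ is $C^\infty$.

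For (a), near a point of $C \setminus C_1$ some first-order partial, say $\partial f_1/\partial x_1$, is nonzero, so $\phi := (f_1, x_2, \dots, x_l)$ is a local diffeomorphism; in the new coordinates $f$ takes the form $y \mapsto (y_1, \widetilde f_2(y), \dots, \widetilde f_n(y))$, hence maps each slice $\{y_1 = c\}$ into $\{z_1 = c\}$, and its Jacobian is block triangular, so $y$ is critical for $f$ iff its projection is critical for the slice map $\R^{l-1} \to \R^{n-1}$. The inductive hypothesis makes the critical values of each slice map null in $\R^{n-1}$, and Fubini's theorem promotes this to nullity of the critical values of $f$ in this chart; a countable cover of $C \setminus C_1$ by such charts finishes (a). For (b), near a point of $C_i \setminus C_{i+1}$ there is an $i$-th order partial $w$ of some component with $\partial w/\partial x_s$ nonzero there, so locally $W := w^{-1}(0)$ is a manifold of dimension $l - 1$, and $C_i$ is locally contained in $W$ because $w$, being itself an $i$-th order partial derivative, vanishes on $C_i$. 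Since $i \geq 1$ gives $C_i \subseteq C_1$, every point of $C_i$ near the base point is a critical point of the restriction $f|_W$, so applying the inductive hypothesis to $f|_W$ (read in a chart of $W$) shows $f(C_i \cap U')$ is null; a countable cover completes (b).

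Part (c) is the quantitative heart of the argument, and the step I expect to be the main obstacle. Fix a closed cube $Q \subseteq U$ of side $\delta$. Since all partial derivatives of $f$ up to order $k$ vanish along $C_k$, Taylor's theorem with remainder yields a constant $c = c(Q)$ with $\|f(p+h) - f(p)\| \leq c\|h\|^{k+1}$ whenever $p \in C_k \cap Q$ and $p + h \in Q$. Partition $Q$ into $r^l$ subcubes of side $\delta/r$: each subcube meeting $C_k$ has image contained in a ball of radius $O(r^{-(k+1)})$, hence of $n$-volume $O(r^{-n(k+1)})$, and there are at most $r^l$ such subcubes, so the outer measure of $f(C_k \cap Q)$ is $O(r^{\,l - n(k+1)})$. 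Choosing $k$ with $n(k+1) > l$ and letting $r \to \infty$ forces this to $0$, so $f(C_k \cap Q)$ is null; covering $U$ by countably many such cubes gives (c), and assembling (a)--(c) completes the proof. (The case $l < n$ is subsumed: then $C = U$ and $C_1$ may be empty, but step (a) still handles $U \setminus C_1$ and the induction bottoms out at $l = 0$; alternatively one disposes of $l < n$ directly, a $C^1$ map being locally Lipschitz and the Lipschitz image of a lower-dimensional set being null.)
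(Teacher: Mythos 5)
The paper does not prove Sard's theorem at all; it states it and refers the reader to Milnor \cite{milnor_topdiff_1965} (or Federer) for the proof. Your argument is exactly the classical proof given in that reference --- induction on the source dimension $l$, the filtration $C \supseteq C_1 \supseteq C_2 \supseteq \cdots$ by order of vanishing, the chart-plus-Fubini step for $C \setminus C_1$, restriction of $f$ to the hypersurface $w^{-1}(0)$ for $C_i \setminus C_{i+1}$, and the Taylor/cube-subdivision estimate for $C_k$ with $n(k+1) > l$ --- and it is correct in outline; the only point left implicit is the measurability needed to invoke Fubini in step (a), which is standardly dispatched by observing that the sets of critical values in question are countable unions of compact sets.
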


This theorem comes from \cite{sard_crit_1942} following an earlier work of Morse \cite{morse_crit_1939}\footnote{This paper is by Anthony Morse, who is \textit{not} the mathematician that Morse Theory is named after. That would be Marston Morse.}. A proof can be found in \cite{milnor_topdiff_1965}, or see \cite{federer_geomsr_1969} for a proof of a more general statement. Intuitively, Sard's theorem states that even though a smooth function $f$ may have many critical points in the domain, the set of the values of those points is relatively small in the codomain.

We now consider a parametric family of smooth functions on a manifold $\manif\subset\R^n$, represented by the smooth map $F:\R^c \times \R^n \rightarrow \R$, where \[F(b,\cdot) = f_b:\R^n\longrightarrow\R, \quad b \in \R^c\] is smooth. We assume $F$ is linear in the first arguement, i.e., $F(ab_1+b_2,\cdot) = af_{b_1} + f_{b_2}$, with $a\in\R$.

\begin{ex}\label{coefficient_space_ex}
The parameter space we are interested in is the space of coefficients of the set of multihomogeneous polynomials $\R[\bm{x}]_{=(d_1,\dots,d_m)}$. Recall that the dimension of the vector space of homogeneous polynomials in $n$ variables of degree $d$ is $\binom{n+d-1}{n-1}$, hence we have that
\begin{equation}\label{dim_ofcoefficients}
c = \prod_{i=1}^m \binom{n_i + d_i - 1}{n_i -1}.
\end{equation}
Thus, for $b\in \R^c$, we associate a multihomogeneous polynomial 
\[
F(b,\bm{x}) = f_b(\bm{x})\;=\sum_{\substack{\alpha_1 \in \N^{n_1} \dots \,\alpha_m \in \N^{n_m} \\ |\alpha_1| = d_1 \dots \,|\alpha_m| = d_m}} \; b_\alpha \bm{x}_1^{\alpha_1} \,\dots\,\bm{x}_m^{\alpha_m}
\]
which is of multidegree $(d_1,\dots,d_m)$. In this case, $F$ is linear in the first argument.
\end{ex}

Recall from Remark \ref{differential_functional}, that the differential of a real valued function on a manifold $\manif$ gives a linear functional of the tangent space. For each fixed $p\in \manif$, we define the map $\Phi_p$ from parameter space $\R^c$ to the cotangent space $T^*_p\manif$, where
\begin{equation}\label{Phi_p}
\begin{aligned}
    \Phi_p : \; \R^c &\longrightarrow T_p^*\manif \\
      b &\longmapsto d_p f_b \qquad 
    \Phi_p(b)(v) = d_pf_b(v) = \nabla_x f_b (p)^T \; v, \; \text{with} \; v\in T_p\manif.
\end{aligned}
\end{equation}
From the assumption of linearity of $F$ in the first argument, we have that $\Phi_p$ is also linear. Theorem \ref{sard} has spawned a variety of alternatives coming from different fields, see for example, \cite[Theorem 9.6.2]{bochnak_real_1998} for a statement in the context of semialgebraic geometry. It has also been used in the closely related field of semialgebraic programming, see the discussion in the introduction.  The following version can be found in \cite[Theorem 1.21]{nicolaescu_morse_2011} or \cite[Chapter 1.7]{guillemin_difftop_2010}, where proofs are given.
\begin{thm}[{\cite[Corollary 1.26]{nicolaescu_morse_2011}}]\label{nondeg_generic}
	Suppose that for all points $p$ on a manifold $\manif$ the map $\Phi_p$, defined in \eqref{Phi_p}, is surjective, and that $c \geq \dim \manif$. Then there exists a Lebesgue measure zero set $\mathcal{Z}\subset \R^c$, such that for all $b\notin \mathcal{Z}$ the function $f_b$ is Morse on $\manif$.
\end{thm}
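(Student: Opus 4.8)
The plan is to run the classical parametric transversality (``global Sard'') argument, localized so that Sard's Theorem (Theorem~\ref{sard}) applies literally. Since $\manif\subseteq\R^n$, it admits a countable cover by chart domains, and a countable union of Lebesgue-null sets is Lebesgue-null; hence it suffices to fix one chart $\varphi\colon U\to V$ of $\manif$ and produce a null set $\mathcal Z_U\subset\R^c$ off of which $f_b$ is Morse on $U\cap\manif$, and then take $\mathcal Z=\bigcup_U\mathcal Z_U$. Fixing such a chart, let $\Omega\subseteq\R^k$ ($k=\dim\manif$) be the open set corresponding to $V\cap\pi_k(\R^n)$ and let $\phi\colon\Omega\to U\cap\manif$ be the corresponding parametrization (the restriction of $\varphi^{-1}$), so that $d\phi_x\colon\R^k\to T_{\phi(x)}\manif$ is a linear isomorphism for every $x\in\Omega$. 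Put $g_b:=f_b\circ\phi\colon\Omega\to\R$. Because criticality and nondegeneracy are phrased via curves (Definitions~\ref{critical_point} and~\ref{nondeg_def}) and curves on $\manif$ through $\phi(x)$ correspond bijectively to curves in $\Omega$ through $x$, the point $\phi(x)$ is a nondegenerate critical point of $f_b$ on $\manif$ precisely when $\nabla_x g_b(x)=0$ and $\nabla^2_x g_b(x)$ is nonsingular. So I am reduced to showing that, outside a null set of $b$, the function $g_b$ is Morse on $\Omega$ in the ordinary sense.

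Introduce the universal gradient map
\[
  \Psi\colon\R^c\times\Omega\longrightarrow\R^k,\qquad \Psi(b,x)=\nabla_x g_b(x).
\]
The crux is that $0$ is a regular value of $\Psi$. For fixed $x$, linearity of $F$ in the parameter makes $b\mapsto\Psi(b,x)$ linear, and $\nabla_x g_b(x)$ is the image of the covector $d_{\phi(x)}f_b\in T^*_{\phi(x)}\manif$ under the isomorphism dual to $d\phi_x$ (chain rule for $f_b\circ\phi$); hence the partial differential $\partial_b\Psi(b,x)$ equals that isomorphism composed with $\Phi_{\phi(x)}$, which is surjective by hypothesis. Thus $\partial_b\Psi(b,x)$ alone is onto $\R^k$, so $\Psi$ is a submersion at every point and, by the regular level set theorem (cf.\ Proposition~\ref{regular_level_set_thm}), $W:=\Psi^{-1}(0)$ is a manifold of dimension $(c+k)-k=c$. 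The hypothesis $c\ge\dim\manif$ is exactly what makes such surjectivity possible; for $k\ge1$ it is in fact forced by surjectivity of $\Phi_p$, and for $k=0$ the manifold is discrete and the claim is vacuous.

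Now let $\pi\colon W\to\R^c$ be the projection $(b,x)\mapsto b$, a smooth map between manifolds of the same dimension $c$. Working in charts of $W$ and taking a countable union, Sard's Theorem gives that the set $\mathcal Z_U$ of critical values of $\pi$ is Lebesgue-null. If $b\notin\mathcal Z_U$, then for every $(b,x)\in\pi^{-1}(b)$ the differential $d_{(b,x)}\pi$ is surjective, hence (equal dimensions) injective. Since $d_{(b,x)}\Psi(\dot b,\dot x)=\partial_b\Psi(b,x)\dot b+\nabla^2_x g_b(x)\,\dot x$ and $T_{(b,x)}W=\ker d_{(b,x)}\Psi$, injectivity of the restricted projection $(\dot b,\dot x)\mapsto\dot b$ means that $\dot b=0$ together with $\nabla^2_x g_b(x)\,\dot x=0$ forces $\dot x=0$; that is, $\nabla^2_x g_b(x)$ is nonsingular. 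As the points $x$ with $(b,x)\in W$ are exactly the critical points of $g_b$, this shows $g_b$ is Morse on $\Omega$, hence $f_b$ is Morse on $U\cap\manif$. Taking $\mathcal Z=\bigcup_U\mathcal Z_U$ over the countable cover completes the argument.

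I expect the main obstacle to be the verification that $0$ is a regular value of $\Psi$: one must identify $\partial_b\Psi(b,x)$ with a composition involving $\Phi_{\phi(x)}$ so that the surjectivity hypothesis can be invoked, which is precisely where the linearity of $F$ in $b$ and the isomorphism $d\phi_x$ enter. A secondary, essentially routine, point requiring care is the passage to charts --- checking that nondegeneracy in the sense of Definition~\ref{nondeg_def} transports along $\phi$ to ordinary nonsingularity of $\nabla^2_x g_b$, and that the countable union $\bigcup_U\mathcal Z_U$ remains Lebesgue-null.
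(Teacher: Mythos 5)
The paper does not prove Theorem~\ref{nondeg_generic} at all --- it is quoted from Nicolaescu (Corollary~1.26) and Guillemin--Pollack --- so there is no in-paper proof to compare against. Your argument is the standard proof of that cited result: reduce to a countable chart cover, show the ``universal gradient'' map $\Psi(b,x)=\nabla_x g_b(x)$ is a submersion using surjectivity of $\Phi_{\phi(x)}$ composed with the isomorphism $(d_x\phi)^*$, apply Sard to the projection $W=\Psi^{-1}(0)\to\R^c$ between equidimensional manifolds, and read off nonsingularity of $\nabla^2_x g_b$ at critical points from injectivity of $d\pi$ on $T_{(b,x)}W$. All of these steps are correct, and the countability and regular-value verifications are handled properly.

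One caveat deserves mention, though it reflects an issue with the paper's definitions rather than with your argument. You translate ``nondegenerate in the sense of Definition~\ref{nondeg_def}'' into ``$\nabla^2_x g_b(x)$ nonsingular,'' and you call this routine. It is not: Definition~\ref{nondeg_def} (equivalently, Proposition~\ref{criticalpoint_nondeg_lagrangian}) requires $v^T H v\neq 0$ for \emph{every} nonzero tangent vector $v$, which for a symmetric form means $H$ is \emph{definite} on $T_p\manif$, strictly stronger than nonsingular (e.g.\ $H=\mathrm{diag}(1,-1)$ is nonsingular but annihilates $v=(1,1)$). With that literal reading the theorem would be false, since rich polynomial families stably produce saddle points with indefinite intrinsic Hessians. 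What your proof establishes is the standard statement --- every critical point has nonsingular intrinsic Hessian --- which is exactly what Nicolaescu's Corollary~1.26 asserts and exactly what the paper needs downstream (at a global minimizer, \eqref{SONC} plus nonsingularity upgrades to \eqref{SOSC}). So your proof is the right one; just be explicit that you are proving the nonsingular-Hessian version and not the condition of Definition~\ref{nondeg_def} verbatim.
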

The above theorem will be the central tool in the proof of our main result in Section \ref{finiteconvergence_sec}. 

\begin{rmk}\label{measure_zero_rmk}
If the Lebesgue measure zero set $\mathcal{Z}$ in Theorem \ref{nondeg_generic} is described by first-order formulas, then it is semialgebraic. Note that this is the case when the family $f_b$ consists of polynomials and $\mathcal{Z}$ describes the polynomials in the family which fail to be Morse. We have by \cite[Proposition 2.8.2]{bochnak_real_1998}, that taking the closure (Euclidean or Zariski) of a semialgebraic set does not increase its dimension. Thus, if the semialgebraic set $\mathcal{Z}$ is Lebesgue measure zero, it is not full dimensional. So in our case, $\mathcal{Z}$ is contained in, at most, an algebraic hypersurface.
\end{rmk}

\section{Finite Convergence on the Product of Spheres}\label{finiteconvergence_sec}
	
Our goal is to show that the moment-SOS hierarchy has finite convergence on the polynomial optimization problem \eqref{POPmultisphere}. To accomplish this, we would like to apply the finite convergence result, Theorem \ref{KKTfiniteconvergence}. We denote the feasible set of \eqref{POPmultisphere} by
\[
\sphs = \big\{\bm{x} = (\bm{x}_1,\,\dots\, ,\bm{x}_m)\; | \; s_i(\bm{x}) = 1 - \|\bm{x}_i\|^2 =0,\; i \in [m] \big\}.
\]
With $\sph^{n_i-1}\subset \R^{n_i}$ as the unit sphere, and $N=\sum_{i=1}^m n_i$, we have
\[
\sphs =\sph^{n_1-1} \times \dots \times \sph^{n_m -1} \subset \R^{n_1}\times \dots \times \R^{n_m} \cong \R^N.
\]In this section, we show that all the assumptions of Theorem \ref{KKTfiniteconvergence} are generically satisfied. To start, $(h)+\QM(g) = (s) + \Sigma[\bm{x}]$  is archimedean, as it contains the polynomial 
\[ m - \|\bm{x}\|^2 = \sum_{i=1}^m s_i(\bm{x}) = \sum_{i=1}^m (1 - \|\bm{x}_i\|^2).\]
Furthermore, we have to show that the local optimality conditions \eqref{LIC}, \eqref{SCC}, and \eqref{SOSC} hold at each global minimizer of \eqref{POPmultisphere}. Since there are no inequality constraints, \eqref{SCC} holds trivially. Moreover, the gradients of the constraints are given by the formula
\begin{equation}\label{constraintsLI}
	\nabla_x s_1 = 
	\, -2\left(\begin{matrix}
	x_{11} \\
	\vdots \\
	x_{1n_1} \\
	0 \\
	\vdots \\
	0
    \end{matrix}\right)
    ,\dots,
	\nabla_x s_i = 
	\, -2\left(\begin{matrix}
	0 \\
	\vdots \\
	0 \\
	x_{i1} \\
	\vdots \\
	x_{in_i} \\
	0 \\ 
	\vdots \\
	0
	\end{matrix}\right)
	,\dots,
	\nabla_x s_m = 
	\, -2\left(\begin{matrix}
	0 \\
	\vdots \\
	0 \\
	x_{m1} \\
	\vdots \\
	x_{mn_m} 
	\end{matrix}\right).
\end{equation}
This immediately shows that \eqref{LIC} also holds at every point of $\sphs$, because a sphere does not contain the zero vector. Note that, by Proposition \ref{regular_level_set_thm}, this implies that $\sphs\subset\R^N$ is a manifold.

What remains to show is that \eqref{SOSC} holds at every global minimizer. Recall from Proposition \ref{local_min_conditions}, that \eqref{SONC} holds at every local minimizer of an optimization problem, where \eqref{LIC} is satisfied. Hence, we only need to show that the inequality in \eqref{SONC} is sharp, i.e., for any global minimizer $\bm{p}$ of \eqref{POPmultisphere}, we need to show
\begin{equation}\label{NTS}
v^T \nabla_x^2 \mathcal{L}(\bm{p},\lambda)v \neq 0, \quad\text{for all}\quad v\in \Bigl( \bigcap_{i\in [m]}\nabla_x s_i(\bm{p})^\perp \Bigr) \setminus \{0\},   
\end{equation}
where $\lambda = (\lambda_1,\dots,\lambda_m)$ is the vector of Lagrange multipliers of $\bm{p}$. We show \eqref{NTS} by using the Sard type result Theorem \ref{nondeg_generic}. The consequence of Theorem \ref{nondeg_generic} can be understood in the context of local optimality conditions through Section \ref{morse_lagrangian_section}, specifically Proposition \ref{criticalpoint_nondeg_lagrangian}. In short, Theorem \ref{nondeg_generic} gives that under some assumptions from differential geometry a generic objective function $f$ is Morse, and $f$ being Morse means that all the local minimizers satisfy \eqref{NTS}. Note that the result is stronger than needed, as it shows that condition \eqref{NTS} holds for all local minimizers (and even all critical points), a superset of the global minimizers. To show that \eqref{NTS} holds generically, we will need the following proposition and lemma.

\begin{prop}\label{assumptions}
When considering the problem \eqref{POPmultisphere}, the assumption of surjectivity in Theorem \ref{nondeg_generic} holds. More precisely, let $\bm{p}\in \sphs = \sph^{n_1-1} \times \dots \times \sph^{n_m -1}$, and let $\R^c$ be the space of coefficients of the multihomogeneous polynomial ring $\R[\bm{x}]_{=(d_1,\dots,d_m)}$. Consider the map $\Phi_{\bm{p}}$ from $\R^c$ to the cotangent space $T^*_{\bm{p}}\sphs$, sending a vector of coefficients $b$ to the differential of the multihomogeneous polynomial $f_b$ evaluated at $\bm{p}$, as in \eqref{Phi_p}. We have that $\Phi_{\bm{p}}$ is surjective for all $\bm{p}\in \sphs$.
\end{prop}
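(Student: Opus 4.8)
The plan is to show that $\Phi_{\bm{p}}$ is surjective by exhibiting, for each $\bm{p} \in \sphs$ and each tangent vector $v \in T_{\bm{p}}\sphs$, a multihomogeneous polynomial $f_b$ of multidegree $(d_1,\dots,d_m)$ whose differential at $\bm{p}$ realizes a prescribed linear functional. Since $T^*_{\bm{p}}\sphs$ is finite-dimensional and $\Phi_{\bm{p}}$ is linear, it suffices to hit a spanning set of functionals; by Lemma~\ref{tanspace_lemma} (and Proposition~\ref{tanspace_dim_prop}) we have $T_{\bm{p}}\sphs = \bigcap_{i\in[m]}\nabla_x s_i(\bm{p})^\perp = \bigoplus_{i=1}^m (\bm{p}_i^\perp \cap \R^{n_i})$, since $\nabla_x s_i(\bm{p}) = -2\bm{p}_i$ sits in the $i$-th block. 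So a functional on $T_{\bm{p}}\sphs$ is determined by its action on each block $\bm{p}_i^\perp$, and I would construct test polynomials block by block.

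First I would fix a block $i_0$ and a vector $w \in \bm{p}_{i_0}^\perp$, and build a multihomogeneous polynomial that "sees" only the $i_0$-th direction $w$ at $\bm{p}$. The natural candidate is
\[
f_b(\bm{x}) \;=\; \Big(\textstyle\prod_{i\neq i_0}\langle \bm{p}_i, \bm{x}_i\rangle^{d_i}\Big)\cdot \langle \bm{p}_{i_0},\bm{x}_{i_0}\rangle^{d_{i_0}-1}\,\langle w,\bm{x}_{i_0}\rangle,
\]
which is multihomogeneous of multidegree $(d_1,\dots,d_m)$, hence lies in our coefficient space $\R^c$. Next I would compute $d_{\bm{p}}f_b(v)$ for $v = (v_1,\dots,v_m) \in T_{\bm{p}}\sphs$ using the product rule for the gradient: evaluating at $\bm{x}=\bm{p}$ and using $\langle \bm{p}_i,\bm{p}_i\rangle = 1$, every term in which a derivative falls on a factor $\langle \bm{p}_i,\bm{x}_i\rangle$ with $i \neq i_0$ contributes $d_i\langle\bm{p}_i,v_i\rangle$ times the other factors — but crucially I only need the total, and I expect the clean outcome $d_{\bm{p}}f_b(v) = \langle w, v_{i_0}\rangle + (\text{terms involving }\langle \bm{p}_i,v_i\rangle)$; since $v_i \in \bm{p}_i^\perp$ for a tangent vector, all those extra terms vanish, leaving $d_{\bm{p}}f_b(v) = \langle w, v_{i_0}\rangle$ (up to a harmless nonzero scalar from the exponent $d_{i_0}$, which I can absorb). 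Letting $w$ range over a basis of $\bm{p}_{i_0}^\perp$ and $i_0$ over $[m]$, these functionals span $T^*_{\bm{p}}\sphs$, giving surjectivity.

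The main point requiring care — the only real obstacle — is the degenerate interaction when some $d_i = 0$, or more precisely making sure the construction still produces a genuine multihomogeneous polynomial of the exact prescribed multidegree and that the relevant exponent $d_{i_0}-1$ is nonnegative; if $d_{i_0} = 0$ the problem is trivial in that block (the tangent functional there is forced to be zero anyway since $f_b$ does not depend on $\bm{x}_{i_0}$, but the objective is not constant in $\bm{x}_{i_0}$-direction only if $d_{i_0}\geq 1$ — and in \eqref{POPmultisphere} one may restrict attention to the blocks with $d_i \geq 1$ without loss of generality, or handle $d_i = 0$ separately by noting $\sphs$ can be reduced). The computation of $d_{\bm{p}}f_b$ itself is routine once one notes the Leibniz expansion and kills the $\langle\bm{p}_i,v_i\rangle$ terms via tangency; I would present it compactly rather than writing out all $m$ summands.
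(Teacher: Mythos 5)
Your proof is correct, and it takes a genuinely different route from the paper's. The paper first proves surjectivity at the single distinguished point $\pn=((1,\bm{0}),\dots,(1,\bm{0}))$ by hitting the dual basis with the monomials $x_{1,1}^{d_1}\cdots x_{i,1}^{d_i-1}x_{i,j}\cdots x_{m,1}^{d_m}$, and then transports the conclusion to an arbitrary $\bm{p}$ via a block-diagonal Householder rotation, which requires two auxiliary lemmas (that the rotation is an isometric isomorphism with full-rank differential, and that precomposition with it preserves the space $\R[\bm{x}]_{=(d_1,\dots,d_m)}$) together with the chain rule on cotangent spaces. You instead work directly at $\bm{p}$ with the adapted test polynomials $\bigl(\prod_{i\neq i_0}\langle\bm{p}_i,\bm{x}_i\rangle^{d_i}\bigr)\langle\bm{p}_{i_0},\bm{x}_{i_0}\rangle^{d_{i_0}-1}\langle w,\bm{x}_{i_0}\rangle$; your Leibniz computation is right (each cross term carries a factor $\langle\bm{p}_i,v_i\rangle=0$ or $\langle w,\bm{p}_{i_0}\rangle=0$, and the surviving term is exactly $\langle w,v_{i_0}\rangle$ with coefficient $1$, so no scalar actually needs absorbing), and the resulting functionals span $T^*_{\bm{p}}\sphs=\bigl(\bigoplus_i\bm{p}_i^\perp\bigr)^*$. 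Your polynomials are precisely the pullbacks of the paper's monomials under the rotation, so the two arguments are morally equivalent, but yours is self-contained and shorter, eliminating the transport machinery at the cost of a slightly longer pointwise computation. One small point: surjectivity genuinely fails if some $d_{i_0}=0$ (then no $f_b$ depends on $\bm{x}_{i_0}$, yet $\dim\bm{p}_{i_0}^\perp=n_{i_0}-1\ge 1$), so the hypothesis $d_i\ge 1$ for all $i$, which the paper assumes throughout (cf.\ Lemma~\ref{cgeqdim_lemma}), is needed for the statement itself and not merely for your construction; your remark on this case should be stated as a standing hypothesis rather than something to be ``handled separately.''
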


Recall that, we assume $n_i \geq 2$ for each $i$. The following lemma shows that the dimension of the coefficient space $c$, given in \eqref{dim_ofcoefficients}, is larger than the dimension of the manifold $\sphs$, which is equal to $n-m$, for any choice of the $d_i$'s.

\begin{lem}\label{cgeqdim_lemma}
Consider the integers $n_1,\dots,n_m,d_1,\dots,d_m,m \in \N$, such that $m\geq 1$, $d_i \geq 1$, and $n_i \geq 2$ for all $i$ in $[m]$. Then we have
\[
\sum_{i=1}^m (n_i - 1)  < \prod_{i=1}^m \binom{n_i + d_i - 1}{n_i -1}.
\]
\end{lem}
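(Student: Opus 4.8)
The plan is to reduce the inequality to a product-versus-sum comparison and prove it by induction on $m$. First I would establish the single-factor bound: for integers $n \geq 2$ and $d \geq 1$,
\[
\binom{n+d-1}{n-1} \geq n,
\]
with equality precisely when $d = 1$. Indeed, $\binom{n+d-1}{n-1} = \binom{n+d-1}{d}$ is the dimension of the space of degree-$d$ forms in $n$ variables, and for $d = 1$ it equals $n$; since this quantity is nondecreasing in $d$, the bound follows. In particular each factor $\binom{n_i + d_i - 1}{n_i - 1}$ is at least $n_i \geq 2$, and is at least $n_i + 1$ whenever $d_i \geq 2$ (for $d_i = 2$ it is $\binom{n_i+1}{2} = n_i(n_i+1)/2 \geq n_i + 1$ since $n_i \geq 2$).

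Next I would set $a_i := \binom{n_i + d_i - 1}{n_i - 1}$ and $b_i := n_i - 1$, so that $a_i \geq b_i + 1 \geq 2$ for all $i$, and it suffices to prove the purely combinatorial statement: if $a_1, \dots, a_m \geq 2$ are integers then $\prod_{i=1}^m a_i \geq 1 + \sum_{i=1}^m (a_i - 1)$, i.e. $\prod a_i > \sum_i b_i$ follows a fortiori from $a_i \geq b_i+1$. This sub-claim is proved by induction on $m$: the case $m = 1$ is $a_1 \geq a_1$; for the inductive step, writing $P = \prod_{i=1}^{m-1} a_i \geq 1 + \sum_{i=1}^{m-1}(a_i - 1)$ and using $a_m \geq 2$,
\[
P \cdot a_m \;\geq\; 2P \;=\; P + P \;\geq\; \Bigl(1 + \sum_{i=1}^{m-1}(a_i-1)\Bigr) + P \;\geq\; 1 + \sum_{i=1}^{m-1}(a_i - 1) + a_m,
\]
where the last step uses $P \geq a_m$; but $P \geq a_m$ need not hold in general, so instead I would use $P \cdot a_m \geq P + (a_m - 1) P \geq P + (a_m - 1) = \bigl(1 + \sum_{i=1}^{m-1}(a_i-1)\bigr) + (a_m - 1)$, which is exactly $1 + \sum_{i=1}^{m}(a_i - 1)$, as $(a_m - 1)P \geq a_m - 1$ since $P \geq 1$. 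Combining with $a_i - 1 \geq b_i = n_i - 1$ gives $\prod_i a_i \geq 1 + \sum_i (n_i - 1) > \sum_i (n_i - 1)$, which is the claim.

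The only genuinely delicate point is getting the induction bookkeeping right so that the strict inequality (rather than $\geq$) is secured; this is handled by the "$1 +$" slack carried through the induction, which is present because each $a_i \geq 2$. I expect no real obstacle here — the estimate $\binom{n+d-1}{n-1} \geq n$ is elementary and the induction is routine — so the main care is simply to phrase the multiplicative-over-additive step cleanly. An alternative, even shorter route I would consider is: since $a_i \geq 2$ for all $i$, induction gives directly $\prod_i a_i \geq 2^{m-1} a_m \geq a_1 + \dots + a_m$ whenever $m \geq 1$ (as $2^{m-1} \geq m$ and each $a_j \leq a_1 \cdots a_m / \prod_{k\neq j} a_k$... ), but the cleanest is the "$1 + \sum (a_i-1)$" induction above, and I would present that one.
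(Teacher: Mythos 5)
Your proposal is correct and follows essentially the same route as the paper: reduce to the case $d_i=1$ via monotonicity of $\binom{n_i+d_i-1}{n_i-1}$ in $d_i$ (so each factor is at least $n_i$), then induct on $m$ using the elementary estimate $P\cdot a_m = P + (a_m-1)P \ge P + (a_m-1)$, which is exactly the step the paper performs with $P=\prod_{i=1}^{m-1}n_i$. The only difference is cosmetic: you carry a ``$1+$'' slack through the induction to secure strictness, whereas the paper gets it directly from $(n_m-1) < (n_m-1)\prod_{i=1}^{m-1}n_i$.
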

\begin{proof}
Since the right hand side is increasing in $d_i$, it suffices to prove the claim for $d_1,\dots,d_m=1$. In this case, the right hand side is equal to $\prod_{i=1}^m n_i$. We proceed by induction on $m$, with the base case being trivial. The inductive hypothesis gives,
\begin{align*}
\sum_{i=1}^m (n_i - 1) &= \sum_{i=1}^{m-1} (n_i - 1) + (n_m - 1) < \prod_{i=1}^{m-1} n_i + (n_m -1) \\
& < \prod_{i=1}^{m-1} n_i + (n_m -1)\prod_{i=1}^{m-1} n_i = \prod_{i=1}^m n_i.
\end{align*}
\end{proof}

Thus, the assumption that $c \geq \dim \sphs$ in Theorem \ref{nondeg_generic} holds for \eqref{POPmultisphere}. We now need to show the surjectivity of the map $\Phi_{\bm{p}}$. We start by showing surjectivity at one point. Set
\begin{equation}\label{pn}
    \pn \; = \; \left(
    \left(\begin{matrix}
	1 \\
	\bm{0}
	\end{matrix}\right),
	\dots ,
	\left(\begin{matrix}
	1 \\
	\bm{0}
	\end{matrix}\right)
	\right),
\end{equation}
where $\bm{0}$ is the all zeros vector of the appropriate size. We fix the notation of Proposition \ref{assumptions} for the remainder of the section.
\begin{lem}\label{Phi_pn_surjective_lemma}
The map $\Phi_{\pn}$ is surjective.
\end{lem}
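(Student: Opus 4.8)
The plan is to compute the map $\Phi_{\pn}$ explicitly at the distinguished point $\pn$ and exhibit enough polynomials in $\R[\bm{x}]_{=(d_1,\dots,d_m)}$ whose differentials at $\pn$ span the cotangent space $T^*_{\pn}\sphs$. First I would identify the tangent space $T_{\pn}\sphs$: by \Cref{tanspace_lemma} and the formula \eqref{constraintsLI} for the gradients of the $s_i$, the gradient $\nabla_{\bm{x}} s_i(\pn)$ is proportional to the $i$-th block basis vector $e_{i1}$ (the vector whose only nonzero entry is in the first coordinate of the $i$-th block). Hence $T_{\pn}\sphs$ is the span of the basis vectors $\{e_{ij} : i\in[m],\, 2\le j\le n_i\}$, a space of dimension $\sum_i (n_i-1)=N-m$, and its dual $T^*_{\pn}\sphs$ has the same dimension with dual basis indexed by the same set of pairs $(i,j)$.

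Next I would choose, for each pair $(i,j)$ with $2\le j\le n_i$, an explicit monomial $q_{ij}\in\R[\bm{x}]_{=(d_1,\dots,d_m)}$ whose differential at $\pn$ is (a nonzero multiple of) the corresponding dual basis covector. The natural candidate is
\[
q_{ij}(\bm{x}) \;=\; x_{ij}\, x_{i1}^{d_i-1} \prod_{\ell\ne i} x_{\ell 1}^{d_\ell},
\]
which indeed has multidegree $(d_1,\dots,d_m)$. Computing its gradient at $\pn$, every coordinate of $\pn$ is $0$ except the $x_{\ell 1}$'s, which are $1$; differentiating with respect to $x_{ij}$ gives a nonzero value (namely $1$, since the remaining factors all evaluate to $1$ at $\pn$), while differentiating with respect to any other coordinate $x_{i'j'}$ yields a monomial still containing some variable that vanishes at $\pn$ — I should check this carefully for the coordinates $x_{\ell 1}$ with $\ell\ne i$ and for $x_{i1}$, where the derivative picks up a factor $x_{ij}$ which vanishes at $\pn$, and for the first coordinate in block $i$ it picks up a factor $x_{ij}$ as well, so in all cases the off-diagonal partials vanish. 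Thus $\Phi_{\pn}(q_{ij}) = d_{\pn} q_{ij}$ is, up to a nonzero scalar, the covector dual to $e_{ij}$ restricted to $T_{\pn}\sphs$. Since these covectors for $(i,j)$ ranging over $i\in[m]$, $2\le j\le n_i$ form a basis of $T^*_{\pn}\sphs$ and lie in the image of the linear map $\Phi_{\pn}$, the map is surjective.

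The main obstacle, and the only place requiring genuine care rather than bookkeeping, is verifying that the off-diagonal partial derivatives of $q_{ij}$ all vanish at $\pn$ — in particular handling the cases $j'=1$ within each block, since differentiating $x_{i1}^{d_i-1}$ or $x_{\ell 1}^{d_\ell}$ produces terms that do not obviously vanish. The resolution is that each such term retains the factor $x_{ij}$, which is a coordinate equal to $0$ at $\pn$ (here $j\ge 2$ is essential, and this is where the hypothesis $n_i\ge 2$ enters, guaranteeing such a $j$ exists); one should also note the edge case $d_i=1$, where $x_{i1}^{d_i-1}=1$ and the argument is even simpler. Once this is checked, linearity of $\Phi_{\pn}$ (which follows from the linearity of $F$ in its first argument, as noted after \eqref{Phi_p}) together with \Cref{cgeqdim_lemma} guaranteeing $c\ge\dim\sphs$ completes the verification that the hypotheses of \Cref{nondeg_generic} hold at $\pn$.
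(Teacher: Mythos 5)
Your proof is correct and follows essentially the same route as the paper: identify the basis $\{e_{ij}\}_{i\in[m],\,2\le j\le n_i}$ of $T_{\pn}\sphs$, and for each $(i,j)$ exhibit the same monomial $x_{ij}\,x_{i1}^{d_i-1}\prod_{\ell\ne i}x_{\ell 1}^{d_\ell}$ whose differential at $\pn$ is the corresponding dual basis covector, concluding by linearity of $\Phi_{\pn}$. The careful check you flag (that the partials with respect to the coordinates $x_{\ell 1}$ retain the vanishing factor $x_{ij}$) is exactly the computation the paper carries out.
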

\begin{proof}
Let $N=\sum_{i=1}^m n_i$. We first describe the tangent space $T_{\pn}\sphs$. From Lemma \ref{tanspace_lemma}, we have
\begin{equation}\label{tanspace_description}
    T_{\pn}\mathcal{S} = \{v\in \R^N \; | \; \nabla_x s_i(\pn)^T v = 0 \text{ for all } i \in [m] \}.
\end{equation}
With $\pn$ as in \eqref{pn}, we have that 
\[
\nabla_x s_i(\pn) = \left(\bm{0},\dots ,
	\left(\begin{matrix}
	1 \\
	\bm{0} 
	\end{matrix}\right),
	\dots,\bm{0}\right) 
	\in \R^{n_1}\times \dots \times \R^{n_m} \cong \R^N,
\]
where the nonzero entry is in the $i^{th}$ horizontal position. This allows us to describe a basis of $T_{\pn}\mathcal{S}$. With $i\in [m]$ and $j \in [n_i]\setminus \{1\}$, set 
\begin{equation}\label{e_ij}
    e_{ij} \; = \; \left(
    \left(\begin{matrix}
	0 \\
	\vdots \\
	0
	\end{matrix}\right),
	\dots ,
	\left(\begin{matrix}
	\bm{0}\\
	1 \\
	\bm{0}
	\end{matrix}\right)
	, \dots,
	\left(\begin{matrix}
	0\\
	\vdots \\
	0
	\end{matrix}\right)
	\right) \; \in \; \R^{n_1}\times \dots \times \R^{n_m},
\end{equation}
where the one is in the $i^{th}$ entry `horizontally' and the $j^{th}$ entry `vertically', and $\bm{0}$ is the all zeros vector of the appropriate size. Excluding $j=1$ accounts for the orthogonality condition in \eqref{tanspace_description}. Thus, the set 
\[
\bar{\mathcal{E}} = \left\{ e_{ij} \; |\; i \in [m] , \;  j \in [n_i]\setminus\{1\} \right\}
\]
is a basis for $T_{\pn}\sphs$. The codomain of the map $\Phi_{\pn}$ is the cotangent space $T_{\pn}^*\sphs$, which is defined as the algebraic dual of $T_{\pn}\sphs$. Since the tangent space is a finite dimensional real vector space, it is isomorphic to its dual. Through an abuse of notation, we also refer to the set $\bar{\mathcal{E}}$ as a basis of $T_{\pn}^*\sphs$.

Now, to prove the claim, it suffices to show that for all $e_{ij}\in \bar{\mathcal{E}}$, there exists $b_{ij} \in \R^c$ such that $\Phi_{\pn}(b_{ij})= e_{ij}$. Fix $i\in [m]$ and $j\in [n_i]\setminus\{1\}$ and consider the monomial 
\[
x_{1,1}^{d_1} \;\dots\; x_{i-1,1}^{d_{i-1}} \;\; x_{i,1}^{d_i-1} \;\; x_{i,j} \;\; x_{i+1,1}^{d_{i+1}} \;\dots\; x_{m,1}^{d_m},
\]
and let $b_{ij}$ be the corresponding coefficient vector. Note that $b_{ij}$ has only one non zero entry, which is equal to one. To compute
\[
\Phi_{\pn}\,(b_{ij})\; = \; d_{\pn} \left(x_{1,1}^{d_1} \;\dots\; x_{i-1 ,1}^{d_{i-1}} \;\; x_{i,1}^{d_i -1} \;\; x_{i,j} \;\; x_{i+1, 1}^{d_{i+1}} \;\dots\; x_{m,1}^{d_m} \right),
\]
we note that the right hand side above is a linear functional, which can be represented as a vector in $\R^{n_1}\times \dots \times \R^{n_m}$. The entries of this vector are given by, 
\[
\left(\frac{\partial}{\partial x_{kl}} \; x_{1,1}^{d_1} \;\dots\; x_{i-1,1}^{d_{i-1}} \;\; x_{i,1}^{d_i -1} \;\; x_{i,j} \;\; x_{i+1,1}^{d_{i+1}} \;\dots\; x_{m,1}^{d_m}\;\right) \Big|_{\pn} 
\]
with $k\in [m]$ and $l \in [n_k]$. Thus, by the definition of the point $\pn$, the above expression evaluates to
\[
\Phi_{\pn}\,(b_{ij}) = \; \begin{cases}
1 \quad \text{if} \; k=i \; \text{and} \; l=j, \\
0 \quad \text{else}.
\end{cases}\\
\]
So $\Phi_{\pn}$ maps $b_{ij}$ to $e_{ij}$, and surjectivity follows from linearity of $\Phi_{\pn}$.
\end{proof}

We show surjectivity at an arbitrary point $\bm{p}$ in $\sphs$ by translating the result of the above lemma to other points of $\sphs$. Translating requires us first, to move points on the manifold, and then to move their tangent vectors. The next lemma deals with the former.

\begin{lem}\label{rotation_lemma}
Consider two points $\pn$ and $\bm{p}$ in $\sphs$, with $\pn$ as in \eqref{pn} and $\bm{p}$ arbitrary. There exists an isometric isomorphism $\varphi_{\bm{p}}:\sphs\rightarrow\sphs$ such that,
\begin{enumerate}
    \item $\varphi_{\bm{p}}(\pn)=\bm{p}$, and
    \item the differential of $\varphi_{\bm{p}}$ has full rank at every point of $\sphs$.
\end{enumerate}
\end{lem}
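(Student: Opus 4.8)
The plan is to realize $\varphi_{\bm{p}}$ as a block-diagonal orthogonal linear map, exploiting the fact that the product of spheres carries a large transitive isometry group. For each $i\in[m]$, both $\bm{p}_i$ and the vector $\bm{e}^{(i)} := (1,\bm{0})\in\R^{n_i}$ lie on $\sph^{n_i-1}$, so completing $\bm{p}_i$ to an orthonormal basis of $\R^{n_i}$ (via Gram--Schmidt) yields an orthogonal matrix $R_i\in O(n_i)$ with $R_i\bm{e}^{(i)}=\bm{p}_i$. Setting $R=\operatorname{diag}(R_1,\dots,R_m)$, I would define $\varphi_{\bm{p}}\colon\sphs\to\sphs$ as the restriction of the ambient linear map $\bm{x}\mapsto R\bm{x}$, i.e. $\varphi_{\bm{p}}(\bm{x}_1,\dots,\bm{x}_m)=(R_1\bm{x}_1,\dots,R_m\bm{x}_m)$.

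Then I would verify the required properties in turn. First, $\varphi_{\bm{p}}$ is a well-defined self-map of $\sphs$: each $R_i$ is orthogonal, so $\|R_i\bm{x}_i\|=\|\bm{x}_i\|$, hence $s_i(\varphi_{\bm{p}}(\bm{x}))=s_i(\bm{x})$ and feasibility is preserved; it is a bijection with inverse $\bm{x}\mapsto(R_1^{T}\bm{x}_1,\dots,R_m^{T}\bm{x}_m)$, which has the same form. It is an isometry for the Riemannian metric induced from the Euclidean structure of $\R^N$, because $R$ is orthogonal and therefore preserves the ambient inner product and the induced inner product on every tangent space. Second, $\varphi_{\bm{p}}(\pn)=(R_1\bm{e}^{(1)},\dots,R_m\bm{e}^{(m)})=(\bm{p}_1,\dots,\bm{p}_m)=\bm{p}$ with $\pn$ as in \eqref{pn}, giving property (1). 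Third, for property (2): since $\varphi_{\bm{p}}$ is the restriction of a linear map, by \eqref{pushforward} its differential at any $q\in\sphs$ is $d_q\varphi_{\bm{p}}\colon v\mapsto Rv$, i.e. $d_q\varphi_{\bm{p}}=R|_{T_q\sphs}$. As $\varphi_{\bm{p}}$ is a diffeomorphism of $\sphs$ onto itself, $R$ maps $T_q\sphs$ isomorphically onto $T_{\varphi_{\bm{p}}(q)}\sphs$, so $d_q\varphi_{\bm{p}}$ has full rank at every $q\in\sphs$; alternatively, one checks directly that $\nabla_x s_i(Rq)=R\,\nabla_x s_i(q)$ (using orthogonality of $R_i$), whence $R$ carries $\bigcap_{i}\nabla_x s_i(q)^{\perp}$ bijectively onto $\bigcap_{i}\nabla_x s_i(Rq)^{\perp}$, which by Lemma \ref{tanspace_lemma} are exactly the two tangent spaces.

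There is no genuinely hard step here: the statement is essentially the observation that the block-diagonal orthogonal group $O(n_1)\times\dots\times O(n_m)$ acts transitively by isometries on $\sphs$. The only points warranting a sentence of care are that the map constructed is the restriction of an ambient \emph{linear} isomorphism of $\R^N$ (so that computing its pushforward via \eqref{pushforward} is immediate), and that one must invoke Lemma \ref{tanspace_lemma} to identify the image $R(T_q\sphs)$ with $T_{Rq}\sphs$. I would also note explicitly that $\varphi_{\bm{p}}$ extends to a linear self-map of the ambient $\R^N$, since this extension is what the next lemma will use to transport tangent vectors from $T_{\pn}\sphs$ to $T_{\bm{p}}\sphs$.
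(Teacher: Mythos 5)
Your proposal is correct and takes essentially the same approach as the paper: both realize $\varphi_{\bm{p}}$ as the restriction of a block-diagonal orthogonal linear map of $\R^N$ sending each $\bm{e}^{(i)}$ to $\bm{p}_i$, so that isometry, bijectivity, and full rank of the differential all follow from orthogonality and linearity. The only difference is cosmetic: the paper chooses each block to be the explicit Householder reflection $I_{n_i}-2uu^T$ with $u=(\pn_i-\bm{p}_i)/\|\pn_i-\bm{p}_i\|$, whereas you obtain $R_i$ by completing $\bm{p}_i$ to an orthonormal basis via Gram--Schmidt.
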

\begin{proof}
We give an example of such a isomorphism. Write 
\[
\pn = (\pn_1,\dots \pn_m) \quad \text{and} \quad \bm{p} = ( \bm{p}_1,\dots,\bm{p}_m),
\]
where $\pn_i$ and $\bm{p}_i$ are in $\sph^{n_i-1}$. Fix $i \in [m]$, we define a linear map $\varphi_{\bm{p}_i}(\bm{x}_i) = M_{\varphi_{\bm{p}_i}}\bm{x}_i$ which sends $\pn_i$ to $\bm{p}_i$. We set $\varphi_{\bm{p}_i}$ as the Householder transformation \cite{householder_reflection_1958} from $\R^{n_i}$ to itself, which is given by the matrix
\begin{equation}\label{householder}
M_{\varphi_{\bm{p}_i}} = I_{n_i} - 2uu^T, \;\;\text{where}\;\; u = \frac{\pn_i -\bm{p}_i}{\|\pn_i -\bm{p}_i\|}.
\end{equation}
If $\pn_i = \bm{p}_i$, we set $u =0$. It is clear that $\varphi_{\bm{p}_i}$ is an isometric isomorphism. Let $\varphi_{\bm{p}} = (\varphi_{\bm{p}_1},\dots,\varphi_{\bm{p}_m})$ be the linear function given by
\[
\varphi_{\bm{p}}(\bm{x}) =  M_{\varphi_{\bm{p}}} \bm{x},
\]
where $M_{\varphi_{\bm{p}}}$ is an $N\times N$ block diagonal matrix with blocks $M_{\varphi_{\bm{p}_i}}$ along the diagonal, making $\varphi_{\bm{p}}$ an isometric isomorphism sending $\pn$ to $\bm{p}$. Furthermore, since $\varphi_{\bm{p}}$ is a linear map, its Jacobian is also equal to $M_{\varphi_{\bm{p}}}$. Thus, the differential of $\varphi_{\bm{p}}$ has full rank.
\end{proof}

To translate the tangent vectors of points we need the next technical result, which will allow us to use the chain rule.

\begin{lem}\label{phi_keeps_mhomo}
Let $\varphi_{\bm{p}}$ be as in the proof of Lemma \ref{rotation_lemma}. Fix a multihomogeneous polynomial $f_{\bar{b}}$ in $\R[\bm{x}]_{=(d_1,\dots,d_m)}$, with coefficient vector $\bar{b}\in \R^c$. We have that the composition $f_{\bar{b}} \circ \varphi_{\bm{p}}$ is a multihomogeneous polynomial of the same multidegree as $f_{\bar{b}}$, i.e., 
\[\text{there exists}\;\; b\in \R^c \; \st \; f_b = f_{\bar{b}} \circ \varphi_{\bm{p}}\in \R[\bm{x}]_{=(d_1,\dots,d_m)}.\]
Furthermore, the analogous statement also holds for the inverse $\varphi_{\bm{p}}^{-1}$.
\end{lem}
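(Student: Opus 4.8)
The key observation is that $\varphi_{\bm p}$ respects the multigraded structure of $\R[\bm x]$ because it acts block-wise: it is given by a block-diagonal matrix $M_{\varphi_{\bm p}} = \operatorname{diag}(M_{\varphi_{\bm p_1}},\dots,M_{\varphi_{\bm p_m}})$, so $\varphi_{\bm p}$ sends the block of variables $\bm x_i$ to a linear combination of the variables $\bm x_i$ only (no mixing between blocks). The plan is to first record this block-wise action precisely, then argue that precomposition by such a map preserves each $i^{\text{th}}$ degree of a monomial, and hence preserves multidegree on the whole ring.

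**Step 1: block-wise action.** Write $\varphi_{\bm p}(\bm x) = (\,M_{\varphi_{\bm p_1}}\bm x_1,\dots, M_{\varphi_{\bm p_m}}\bm x_m\,)$. So if $\bm y_i := M_{\varphi_{\bm p_i}}\bm x_i$, then each coordinate $y_{ij}$ is a homogeneous linear form in the variables $x_{i1},\dots,x_{in_i}$ alone, i.e. $y_{ij} \in \R[\bm x_i]_{=1} \subset \R[\bm x]_{=(0,\dots,0,1,0,\dots,0)}$ with the $1$ in position $i$.

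**Step 2: preservation of multidegree.** Take a monomial $\bm x_1^{\alpha_1}\cdots \bm x_m^{\alpha_m}$ with $|\alpha_i| = d_i$ appearing in $f_{\bar b}$. Substituting gives $\bm y_1^{\alpha_1}\cdots \bm y_m^{\alpha_m}$; since $\bm y_i^{\alpha_i}$ is a homogeneous polynomial of degree $|\alpha_i| = d_i$ in the variables $\bm x_i$ only, it lies in $\R[\bm x]_{=(0,\dots,d_i,\dots,0)}$. The space $\R[\bm x]_{=(d_1,\dots,d_m)}$ is closed under products of this form (the $i^{\text{th}}$ degrees add), so the product lies in $\R[\bm x]_{=(d_1,\dots,d_m)}$. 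Summing over the monomials of $f_{\bar b}$ (each weighted by its coefficient) shows $f_{\bar b}\circ\varphi_{\bm p} \in \R[\bm x]_{=(d_1,\dots,d_m)}$; call its coefficient vector $b \in \R^c$, so $f_b = f_{\bar b}\circ\varphi_{\bm p}$. Concretely, $b$ depends linearly on $\bar b$ (the substitution is linear in the coefficients), which will be useful in the next lemma, though it is not strictly needed here.

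**Step 3: the inverse.** Since $M_{\varphi_{\bm p_i}} = I_{n_i} - 2uu^T$ is a Householder reflection, it is its own inverse (it is symmetric and orthogonal with $M_{\varphi_{\bm p_i}}^2 = I_{n_i}$), so $\varphi_{\bm p}^{-1} = \varphi_{\bm p}$ is again a block-diagonal orthogonal map of exactly the same type, and the argument of Steps 1--2 applies verbatim. This gives the "furthermore" clause.

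**Main obstacle.** There is no real obstacle — this is a bookkeeping lemma. The only thing to be careful about is making the block-diagonality claim airtight and phrasing the multidegree-addition step cleanly, i.e. noting that a change of variables mixing only the $\bm x_i$ among themselves cannot change the $i^{\text{th}}$ degree of any monomial, hence preserves the grading by multidegree. Everything else (Householder reflections being involutions, linearity of substitution) is immediate.
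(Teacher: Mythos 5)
Your proposal is correct and follows essentially the same route as the paper: the block-diagonal structure of $\varphi_{\bm{p}}$ means each block of variables $\bm{x}_i$ is replaced by linear forms in $\bm{x}_i$ alone, so every monomial's $i^{\text{th}}$ degree is preserved under substitution. The only cosmetic difference is in the ``furthermore'' clause, where you invoke the involutive property of Householder reflections while the paper simply notes that the inverse of a block-diagonal matrix is block-diagonal; both justifications are immediate and valid.
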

\begin{proof}
We recall some notation. We denote variables as $\bm{x} = (\bm{x}_1 , \dots , \bm{x}_m)$ with $\bm{x}_i = (x_{i1},\dots ,x_{in_i})$, for each $i\in [m]$. For $f \in \R[\bm{x}]_{=(d_1,\dots,d_m)}$, we write $f(\bm{x}) \,= \,\sum \,f_\alpha \,\bm{x}^{\alpha_1}_1 \,\dots\, \bm{x}^{\alpha_m}_m$, where $\alpha_i = (\alpha_{i1},\dots, \alpha_{in_i})$ and $|\alpha_i|=d_i$, for each $i$ in $[m]$, and so $\bm{x}_i^{\alpha_i} = x_{i1}^{\alpha_{i1}} \dots x_{in_i}^{\alpha_{in_i}}$. From Lemma \ref{rotation_lemma}, we have $\varphi_{\bm{p}} = (\varphi_{\bm{p}_1}, \dots, \varphi_{\bm{p}_m})$, where for each $i$ in $[m]$, $\varphi_{\bm{p}_i}$ is a linear map from $\R^{n_i}$ to itself. In other words, $\varphi_{\bm{p}_i}(\bm{x}_i)$ is a length $n_i$ vector where each entry is a linear polynomial in the variables $x_{i 1},\dots,x_{i n_i}$. Writing out $(f\circ\varphi_{\bm{p}}) (\bm{x})$, we have
\[
\begin{aligned}
f(\varphi_{\bm{p}}(\bm{x})) \, &=\, f\big(\varphi_{\bm{p}_1}(\bm{x}_1),\dots, \varphi_{\bm{p}_m}(\bm{x}_m)\big)\\
&= \, \sum \, f_\alpha \,\big(\varphi_{\bm{p}_1}(\bm{x}_1)\big)^{\alpha_1} \dots \big(\varphi_{\bm{p}_m}(\bm{x}_m)\big)^{\alpha_m}.
\end{aligned}
\]
Fix $i$ in $[m]$. Since $\varphi_{\bm{p}_i}(\bm{x}_i)$ is a vector with entries being linear polynomials, we have
\[
\big(\varphi_{\bm{p}_i}(\bm{x}_i)\big)^{\alpha_i} = \big(\ell_{i1}(x_{i 1},\dots,x_{i n_i})\big)^{\alpha_{i1}} \dots \, \big(\ell_{in_i}(x_{i 1},\dots,x_{i  n_i})\big)^{\alpha_{in_i}},
\]
where $\ell_{ij}$ is a homogeneous polynomial of degree one, for each $j$ in $[n_i]$. Every monomial in the product above will have degree $d_i=|\alpha_i|$, and hence the polynomial $f\circ \varphi_{\bm{p}}$ will be multihomogeneous of multidegree $(d_1,\dots,d_m)$.

Since the inverse of a block diagonal matrix is also block diagonal, the argument above is the same for $\varphi_{\bm{p}}^{-1}$.
\end{proof}

\begin{lem}\label{surjectivity_lemma}
Consider two points $\pn$ and $\bm{p}$ in $\sphs$, with $\pn$ as in \eqref{pn} and $\bm{p}$ arbitrary. If $\Phi_{\pn}$ is surjective, then so is $\Phi_{\bm{p}}$.
\end{lem}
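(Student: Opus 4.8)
The plan is to transport the surjectivity at $\pn$ to $\bm{p}$ using the isometric isomorphism $\varphi_{\bm{p}}$ from Lemma \ref{rotation_lemma} together with the fact, established in Lemma \ref{phi_keeps_mhomo}, that precomposition with $\varphi_{\bm{p}}$ (and with $\varphi_{\bm{p}}^{-1}$) preserves the space $\R[\bm{x}]_{=(d_1,\dots,d_m)}$. Concretely, $\varphi_{\bm{p}}$ restricts to a diffeomorphism $\sphs\to\sphs$ with $\varphi_{\bm{p}}(\pn)=\bm{p}$, so its differential at $\pn$ is a linear isomorphism $d_{\pn}\varphi_{\bm{p}}:T_{\pn}\sphs\to T_{\bm{p}}\sphs$ (this uses that the Jacobian $M_{\varphi_{\bm{p}}}$ has full rank and $\varphi_{\bm{p}}$ is a bijection of $\sphs$). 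Dualizing gives a linear isomorphism $(d_{\pn}\varphi_{\bm{p}})^*:T^*_{\bm{p}}\sphs\to T^*_{\pn}\sphs$.

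The key computation is the chain rule \eqref{chain_rule} in the form $d_{\pn}(f_b\circ\varphi_{\bm{p}})=d_{\bm{p}}f_b\circ d_{\pn}\varphi_{\bm{p}}$, i.e.\ $d_{\pn}(f_b\circ\varphi_{\bm{p}})=(d_{\pn}\varphi_{\bm{p}})^*\big(d_{\bm{p}}f_b\big)$. Now I would argue as follows. Let $\psi:\R^c\to\R^c$ be the linear map sending the coefficient vector $b$ to the coefficient vector of $f_b\circ\varphi_{\bm{p}}$; this is well-defined by Lemma \ref{phi_keeps_mhomo}, and it is invertible since precomposition with $\varphi_{\bm{p}}^{-1}$ provides a two-sided inverse. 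Then the chain rule identity says exactly that
\[
\Phi_{\pn}\circ\psi \;=\; (d_{\pn}\varphi_{\bm{p}})^*\circ\Phi_{\bm{p}},
\]
as maps $\R^c\to T^*_{\pn}\sphs$, because $\Phi_{\pn}(\psi(b))=d_{\pn}f_{\psi(b)}=d_{\pn}(f_b\circ\varphi_{\bm{p}})=(d_{\pn}\varphi_{\bm{p}})^*(d_{\bm{p}}f_b)=(d_{\pn}\varphi_{\bm{p}})^*(\Phi_{\bm{p}}(b))$. Since $(d_{\pn}\varphi_{\bm{p}})^*$ is a linear isomorphism, this gives $\Phi_{\bm{p}}=\big((d_{\pn}\varphi_{\bm{p}})^*\big)^{-1}\circ\Phi_{\pn}\circ\psi$, and surjectivity of $\Phi_{\pn}$ (hypothesis) composed with the surjective map $\psi$ and the surjective isomorphism yields surjectivity of $\Phi_{\bm{p}}$.

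I expect the only real subtlety — and the step to write carefully — is checking that $\varphi_{\bm{p}}$ genuinely restricts to a smooth map $\sphs\to\sphs$ to which the chain rule of Section \ref{morse_sec} applies, and that $d_{\pn}\varphi_{\bm{p}}$ as a map of tangent spaces is the restriction of $M_{\varphi_{\bm{p}}}$; both follow from $\varphi_{\bm{p}}$ being an isometric linear isomorphism preserving each factor $\sph^{n_i-1}$, but it should be stated explicitly. A minor bookkeeping point is confirming that $\psi$ is indeed linear and bijective: linearity is immediate from $f\mapsto f\circ\varphi_{\bm{p}}$ being linear in the coefficients, and bijectivity follows from the "furthermore" clause of Lemma \ref{phi_keeps_mhomo} giving the inverse $b\mapsto$ coefficients of $f_b\circ\varphi_{\bm{p}}^{-1}$. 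Everything else is a formal diagram chase, so there is no genuine obstacle here; it is really just assembling Lemmas \ref{rotation_lemma} and \ref{phi_keeps_mhomo} with the chain rule.
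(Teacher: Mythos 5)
Your proposal is correct and follows essentially the same route as the paper: transport the problem from $\bm{p}$ back to $\pn$ via the rotation of Lemma \ref{rotation_lemma}, apply the chain rule to identify differentials, and use Lemma \ref{phi_keeps_mhomo} to stay inside the coefficient space $\R^c$. The only difference is presentational — the paper argues element-wise with $\varphi_{\bm{p}}^{-1}$ (pull back a given $\ell\in T^*_{\bm{p}}\sphs$, lift it by surjectivity of $\Phi_{\pn}$, push forward), whereas you package the same facts into the commutative square $\Phi_{\pn}\circ\psi=(d_{\pn}\varphi_{\bm{p}})^*\circ\Phi_{\bm{p}}$ and conclude by composing surjections.
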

\begin{proof}
Let $\varphi_{\bm{p}}^{-1}$ be the inverse of the isomorphism in the proof of Lemma \ref{rotation_lemma}, which sends $\bm{p}$ to $\pn$ and has the representing matrix $M_{\varphi_{\bm{p}}}^{-1}$, as in \eqref{householder}.  As a linear map, the Jacobian of $\varphi_{\bm{p}}^{-1}$ is equal to its representing matrix. Thus, the differential is surjective and given by,\[\begin{aligned}
d_{\bm{p}}\varphi_{\bm{p}}^{-1}: T_{\bm{p}}\sphs \; &\longrightarrow \; T_{\pn}\sphs\\
v\; &\longmapsto \; M^{-1}_{\varphi_{\bm{p}}} \,v.
\end{aligned}\]

For a linear functional $\ell \in T_{\pn}^*\sphs$, we can send it to $T^*_{\bm{p}}\sphs$ by precomposing with the differential map above, i.e., $\ell(\cdot) \mapsto \ell \big( M^{-1}_{\varphi_{\bm{p}}} \,(\cdot)\big)$. This map is referred to as the pullback of $\varphi_{\bm{p}}^{-1}$ in differential geometry.

We show the map $\Phi_{\bm{p}}$ is surjective, by showing that for each $\ell\in T^*_{\bm{p}}\sphs$, there exists a coefficient vector $b\in\R^c$, such that $\Phi_{\bm{p}}(b)=d_{\bm{p}}f_b=\ell$.

Fix $\ell \in T^*_{\bm{p}}\sphs$ such that $\ell(\cdot)= \bar{\ell}\,\big(M^{-1}_{\varphi_{\bm{p}}} \,(\cdot)\big)$, for some $\bar{\ell}\in T^*_{\pn}\sphs$. By the surjectivity of $\Phi_{\pn}$, we have, for some $\bar{b}\in \R^c$,
\[
\Phi_{\pn}(\bar{b}) \;=\; d_{\pn}f_{\bar{b}}\;=\;\bar{\ell}.
\]
Since $\varphi_{\bm{p}}^{-1}(\bm{p})=\pn$, we have by the chain rule \eqref{chain_rule} that \[\ell=d_{\pn}f_{\bar{b}}\big( M^{-1}_{\varphi_{\bm{p}}} \,(\cdot)\big) = d_{\pn}f_{\bar{b}}\big( d_{\bm{p}}\varphi_{\bm{p}}^{-1} \,(\cdot)\big) =d_{\bm{p}}\big( f_{\bar{b}} \circ \varphi^{-1}_{\bm{p}}\big).\] By Lemma \ref{phi_keeps_mhomo}, we have that there exists $b\in\R^c$ such that $\ell=d_{\bm{p}}f_b = \Phi_{\bm{p}}(b)$, as required.
\end{proof}

We are now ready to prove Proposition \ref{assumptions}.

\begin{proof}[Proof of Proposition \ref{assumptions}]
We want to show that for arbitrary $\bm{p}\in \sphs$, the map $\Phi_{\bm{p}}$ is surjective. Lemma \ref{Phi_pn_surjective_lemma} gives that $\Phi_{\pn}$ is surjective. Thus, the assumption of Lemma \ref{surjectivity_lemma} is satisfied and we have that $\Phi_{\bm{p}}$ is surjective for all $\bm{p}$ in $\sphs$.
\end{proof}

Proposition \ref{assumptions} and Lemma \ref{cgeqdim_lemma} show that the assumptions of Theorem \ref{nondeg_generic} hold for the polynomial optimization problem \eqref{POPmultisphere}. This allows the application of Theorem \ref{nondeg_generic} to prove our main result.

\begin{proof}[Proof of Theorem \ref{main_result}]
When considering the optimization problem \eqref{POPmultisphere}, the assumptions of Theorem \ref{nondeg_generic} are satisfied by Proposition \ref{assumptions} and Lemma \ref{cgeqdim_lemma}. Hence, by the result of Theorem \ref{nondeg_generic}, a generic multihomogeneous polynomial $f$ is Morse. By Proposition \ref{criticalpoint_nondeg_lagrangian}, or the discussion preceding Proposition \ref{assumptions}, $f$ being Morse means that \eqref{SOSC} holds at every global minimizer of \eqref{POPmultisphere}. Thus, the assumptions of Theorem \ref{KKTfiniteconvergence} are satisfied, and so the moment-SOS hierarchy applied to \eqref{POPmultisphere} has finite convergence. Furthermore, the optimality certificate of flat truncation for the moment hierarchy holds for a sufficiently high order of the hierarchy.
\end{proof}

\section{Inequality Constraints and Transversality}\label{inequalities_sec}

In this section, we consider the addition of inequality constraints to the polynomial optimization problem over the product of spheres \eqref{POPmultisphere}. 
Recall the notation,
\[
\sphs = \{\bm{x}\in\R^N\;|\; s_i(\bm{x}) = 0,\; i \in [m]\}= \sph^{n_1-1}\times \dots \times \sph^{n_m-1}\subset \R^N.
\]
We formulate the new problem as,
\begin{equation}\label{POPmultisphere_ineqs}
\begin{aligned}
	f_{\min} = \; &\inf \; \; f (\bm{x})\\
	& \st \;  s_i(\bm{x}) = 1 - \|\bm{x}_i\|^2 =0, \;  i \in [m],\\
	& \qquad g_j(\bm{x}) \geq 0, \; j \in [\bar{m}],
\end{aligned}
\end{equation}
where $g_1,\dots,g_{\bar{m}}\in \R[\bm{x}]$. We extend our main result by showing that the moment-SOS hierarchy has finite convergence for \eqref{POPmultisphere_ineqs}. 
We do this in more generality: we consider a linear space of polynomials with coefficient space $\R^c$, and let $f_b$ be a polynomial given by the coefficient vector $b\in\R^c$. Then we consider the more general polynomial optimization problem, 
\begin{equation}\label{POP_general_ineqs}
\begin{aligned}
	f_{\min} = \; &\inf \; \; f_b (\bm{x})\\
	& \st \;  h_i(\bm{x}) = 0, \;  i \in [m],\\
	& \qquad g_j(\bm{x}) \geq 0, \; j \in [\bar{m}],
\end{aligned}
\end{equation}
and make the following assumption on the polynomials describing the feasible set. 
\begin{assump}\label{LI_assumption}
Consider polynomials $h_1,\dots,h_m$ with vanishing set
\[
\manif = \{\bm{x}\in \R^N \;|\; h_i(\bm{x})=0 \;\;\text{for all}\;\; i\in [m]\}.
\]
Assume the set of gradient vectors $\{\nabla_x h_i(\bm{p})\}_{i=1}^m$ is linearly independent for all $\bm{p}\in \manif$. Further, for $J\subset[\bar{m}]$, consider the set
\begin{equation}\label{S_g_J}
\begin{aligned}
\manif_J = \manif \, &\cap \, \{\bm{x}\in \R^N\;|\; g_j(\bm{x})=0\;\;\text{for all}\;\; j\in J\} \\
&\cap \, \{\bm{x}\in \R^N\;|\;g_l(\bm{x}) > 0\;\; \text{for all}\;\; l\notin J\}.
\end{aligned}
\end{equation}
For any $J$, such that $\manif_J$ is nonempty, and for every $\bm{p} \in \manif_J$, we assume that the vectors $\{\nabla_x g_j(\bm{p})\}_{j \in J} \cup \{\nabla_x h_i(\bm{p})\}_{i=1}^m$ are linearly independent.
\end{assump}

The main result of this section is the following Theorem.

\begin{thm}\label{general_ineqs_thm}
Consider a linear space of polynomials with coefficient space $\R^c$, and let $f_b$ be a generic polynomial given by coefficient vector $b\in\R^c$, i.e., for $b$ outside some Lebesgue measure zero subset of $\R^c$. 
Suppose that Assumption \ref{LI_assumption} holds for \eqref{POP_general_ineqs}. 
Furthermore, suppose that $c \ge \dim \manif$ and that the map $\Phi_p$ from parameter space $\R^c$ to the cotangent space $T^*_p\manif$, where
\begin{equation}\label{Phi_p_general}
\begin{aligned}
    \Phi_p : \; \R^c &\longrightarrow T_p^*\manif \\
      b &\longmapsto d_p f_b \qquad 
    \Phi_p(b)(v) = d_pf_b(v) = \nabla_x f_b (p)^T \; v, \; \text{with} \; v\in T_p\manif,
\end{aligned}
\end{equation}
is surjective. Then, for the problem \eqref{POP_general_ineqs}, the moment-SOS hierarchy has finite convergence. Furthermore, the optimality certificate of flat truncation for the moment hierarchy holds for a sufficiently high order of the hierarchy.
\end{thm}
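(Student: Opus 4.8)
The plan is to verify, for $b$ outside a Lebesgue measure-zero set, the hypotheses of Theorem~\ref{KKTfiniteconvergence} applied to \eqref{POP_general_ineqs}: archimedeanness of $(h)+\QM(g)$ (which for \eqref{POPmultisphere_ineqs} follows from $\sum_i s_i(\bm x)=m-\|\bm x\|^2$ lying in the module), together with \eqref{LIC}, \eqref{SCC} and \eqref{SOSC} at every global minimizer. Condition \eqref{LIC} needs no genericity: every feasible point $u$ lies in the stratum $\manif_{J(u)}$ of Assumption~\ref{LI_assumption}, so $\{\nabla_x h_i(u)\}_i\cup\{\nabla_x g_j(u)\}_{j\in J(u)}$ is linearly independent. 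The work is thus to exclude, off a negligible set of parameters, a global minimizer at which \eqref{SCC} or \eqref{SOSC} fails, and I would organize this by stratifying $\manif$ by the active set: for each $J\subseteq[\bar m]$ with $\manif_J\neq\emptyset$, Assumption~\ref{LI_assumption} together with Proposition~\ref{regular_level_set_thm}, applied on the open set where the inactive constraints are positive, makes $\manif_J$ a manifold of dimension $\dim\manif-|J|\le\dim\manif\le c$.

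For \eqref{SOSC} I would apply Theorem~\ref{nondeg_generic} on each $\manif_J$. The surjectivity hypothesis transfers: since $T_p\manif_J\subseteq T_p\manif$, the restriction $T_p^*\manif\twoheadrightarrow T_p^*\manif_J$ is surjective, and composing it with $\Phi_p$ yields exactly $b\mapsto d_p(f_b|_{\manif_J})$, hence a surjection; and $c\ge\dim\manif_J$. So there is a measure-zero set $\mathcal Z_J^{(1)}$ such that $f_b|_{\manif_J}$ is Morse for $b\notin\mathcal Z_J^{(1)}$. If $u$ is a global minimizer with $J(u)=J$, then near $u$ the feasible set contains $\manif_J$, so $u$ is a local minimizer of $f_b|_{\manif_J}$, hence a critical point, hence nondegenerate; Proposition~\ref{criticalpoint_nondeg_lagrangian} gives $v^\top\nabla_x^2\mathcal L(u,\lambda,\mu)\,v\neq 0$ on $T_u\manif_J\setminus\{0\}$, and by Lemma~\ref{tanspace_lemma} this subspace is precisely $\bigcap_i\nabla_x h_i(u)^\perp\cap\bigcap_{j\in J}\nabla_x g_j(u)^\perp$, with $\mu_j=0$ for $j\notin J$ so that the Lagrangians coincide. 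Combining with \eqref{SONC}, which holds by Proposition~\ref{local_min_conditions} because $u$ is a local minimizer satisfying \eqref{LIC}, a nonzero positive semidefinite form is positive definite, i.e.\ \eqref{SOSC} holds.

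The main obstacle is \eqref{SCC}, and this is where transversality enters. For each $J$ and each $j_0\in J$, I would form $\widehat M_{J,j_0}=\{\bm x: h_i(\bm x)=0\ \forall i,\ g_j(\bm x)=0\ \forall j\in J\setminus\{j_0\},\ g_l(\bm x)>0\ \forall l\notin J,\ \text{and }\{\nabla_x h_i(\bm x)\}_i\cup\{\nabla_x g_j(\bm x)\}_{j\in J\setminus\{j_0\}}\text{ linearly independent}\}$, an open submanifold of the zero set of $\{h_i\}_i\cup\{g_j\}_{j\in J\setminus\{j_0\}}$ that contains all of $\manif_J$. Consider the smooth map $\Psi\colon\widehat M_{J,j_0}\times\R^c\to T^*\widehat M_{J,j_0}\oplus\R$ sending $(p,b)\mapsto\bigl(d_p(f_b|_{\widehat M_{J,j_0}}),\,g_{j_0}(p)\bigr)$, where the second factor is a trivial line bundle. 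I claim $\Psi$ is transverse to the zero section: differentiating in $b$ already surjects onto the cotangent summand by the (restricted) surjectivity hypothesis, while at any zero $(p,b)$ of $\Psi$ one has $g_{j_0}(p)=0$, so $p$ has active set exactly $J$ and $p\in\manif_J$; then by Assumption~\ref{LI_assumption} $\nabla_x g_{j_0}(p)\notin\Span\bigl(\{\nabla_x h_i(p)\}_i\cup\{\nabla_x g_j(p)\}_{j\in J\setminus\{j_0\}}\bigr)=(T_p\widehat M_{J,j_0})^\perp$, so the $p$-derivative of the $\R$-component, $d_p(g_{j_0}|_{\widehat M_{J,j_0}})$, is nonzero. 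By the parametric transversality theorem (see~\cite{guillemin_difftop_2010}), for $b$ outside a measure-zero set $\mathcal Z_{J,j_0}^{(2)}$ the section $p\mapsto\Psi(p,b)$ is transverse to the zero section; but that zero section has codimension $\dim\widehat M_{J,j_0}+1>\dim\widehat M_{J,j_0}$, so the preimage is empty. Hence for such $b$ there is no $p\in\widehat M_{J,j_0}$ with $d_p(f_b|_{\widehat M_{J,j_0}})=0$ and $g_{j_0}(p)=0$; in particular no global minimizer $u$ with $J(u)=J$ can have $\mu_{j_0}=0$, since the KKT relation then forces $\nabla_x f_b(u)\in(T_u\widehat M_{J,j_0})^\perp$, contradicting emptiness. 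Setting $\mathcal Z=\bigcup_{J}\mathcal Z_J^{(1)}\cup\bigcup_{J,\,j_0\in J}\mathcal Z_{J,j_0}^{(2)}$, a finite union of measure-zero sets, every global minimizer of \eqref{POP_general_ineqs} satisfies \eqref{LIC}, \eqref{SCC}, \eqref{SOSC} for $b\notin\mathcal Z$, and Theorem~\ref{KKTfiniteconvergence} yields finite convergence and the flat-truncation certificate. The delicate points are to confirm that each $\manif_J$ and each $\widehat M_{J,j_0}$ is genuinely a manifold of the stated dimension under Assumption~\ref{LI_assumption}, and to keep the Lagrangian and tangent-space identifications consistent across strata.
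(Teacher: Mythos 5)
Your proposal is correct, and while the \eqref{LIC} and \eqref{SOSC} parts coincide with the paper's treatment (stratify by the active set $J$, note $T_{\bm p}\manif_J\subseteq T_{\bm p}\manif$ so the surjectivity of $\Phi_{\bm p}$ restricts, and apply Theorem~\ref{nondeg_generic} on each stratum), your argument for \eqref{SCC} takes a genuinely different route. The paper, for each $J$ and each \emph{subset} $\hat J\subseteq J$, embeds $\manif_J\times\R^m\times\R^{|\hat J|}$ into $\manif_J\times\R^N$ via $\psi_{\hat J}(\bm p,\lambda,\mu)=(\bm p,\sum_i\lambda_i\nabla_x h_i(\bm p)+\sum_{j\in\hat J}\mu_j\nabla_x g_j(\bm p))$ (Lemma~\ref{embedding_lemma}), proves the map $(b,\bm p)\mapsto(\bm p,\nabla_x f_b(\bm p))$ is transverse to each image submanifold, and concludes from the dimension count $\dim\manif+N-2|J|+|\hat J|\ge\dim\manif+N-|J|$ that the support $\hat J$ of the multiplier vector must equal $J$. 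You instead fix a \emph{single} index $j_0\in J$, relax the constraint $g_{j_0}=0$ to obtain the larger manifold $\widehat M_{J,j_0}$, and show that the section $\bm p\mapsto\bigl(d_{\bm p}(f_b|_{\widehat M_{J,j_0}}),g_{j_0}(\bm p)\bigr)$ of a rank-$(\dim\widehat M_{J,j_0}+1)$ bundle is generically transverse to, hence disjoint from, the zero section; a critical point with $\mu_{j_0}=0$ would produce exactly such a zero. Your version dispenses with the embedding lemma and the explicit multiplier bookkeeping, and needs only $|J|$ transversality statements per stratum rather than $2^{|J|}$; the price is that the cotangent bundle and its zero section must be realized concretely (e.g.\ as $\{(\bm p,w):\bm p\in\widehat M_{J,j_0},\,w\in T_{\bm p}\widehat M_{J,j_0}\}\times\R\subseteq\R^{2N+1}$) to invoke Theorem~\ref{transversality_parametric} in the paper's embedded-manifold setting, a point you rightly flag as delicate but which is routine. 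Both arguments rest on the same surjectivity hypothesis and the same ``codimension exceeds dimension, hence empty'' count. One further remark in your favor: you explicitly note the archimedean hypothesis required by Theorem~\ref{KKTfiniteconvergence}, which for the general problem \eqref{POP_general_ineqs} does not follow from the stated assumptions and is left implicit in the theorem as formulated.
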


Theorem \ref{general_ineqs_thm} readily implies that for the problem \eqref{POPmultisphere_ineqs} the moment-SOS hierarchy has finite convergence, for a generic multihomogeneous objective function $f$. This extends our main result, Theorem \ref{main_result}.

\begin{cor}\label{main_result_ineqs_thm}
Let $f$ be a generic multihomogeneous polynomial. For the polynomial optimization problem \eqref{POPmultisphere_ineqs}, if Assumption \ref{LI_assumption} with $h_i=s_i$ holds, the moment-SOS hierarchy has finite convergence. Furthermore, the optimality certificate of flat truncation for the moment hierarchy holds for a sufficiently high order of the hierarchy.
\end{cor}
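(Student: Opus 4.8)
The plan is to obtain the corollary as a direct instantiation of Theorem \ref{general_ineqs_thm}. Take the linear space of polynomials to be the multihomogeneous polynomials $\R[\bm{x}]_{=(d_1,\dots,d_m)}$ of a fixed multidegree, with coefficient space $\R^c$ where $c = \prod_{i=1}^m \binom{n_i+d_i-1}{n_i-1}$ as in Example \ref{coefficient_space_ex}, and take the equality constraints to be $h_i = s_i = 1 - \|\bm{x}_i\|^2$, so that the equality-constraint manifold is $\manif = \sphs = \sph^{n_1-1}\times\dots\times\sph^{n_m-1}$. It then suffices to verify the three hypotheses of Theorem \ref{general_ineqs_thm}: Assumption \ref{LI_assumption}, the inequality $c \ge \dim\manif$, and surjectivity of the map $\Phi_{\bm{p}}$ of \eqref{Phi_p_general} at every $\bm{p} \in \manif$.

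Assumption \ref{LI_assumption} is part of the hypothesis of the corollary, so nothing remains for it; I would only remark that it cannot be dispensed with, since the added inequality constraints $g_j$ are arbitrary (whereas the gradients $\nabla_x s_i$ are automatically linearly independent on $\sphs$, a sphere avoiding the origin, cf.\ \eqref{constraintsLI}), and that genericity in the statement still refers solely to the objective. For the remaining two hypotheses I would simply point back to Section \ref{finiteconvergence_sec}: by Proposition \ref{regular_level_set_thm} and the linear independence of $\{\nabla_x s_i\}$ on $\sphs$, $\manif = \sphs$ is a manifold of dimension $\sum_{i=1}^m(n_i-1)$, and Lemma \ref{cgeqdim_lemma} gives $\sum_{i=1}^m(n_i-1) < \prod_{i=1}^m\binom{n_i+d_i-1}{n_i-1} = c$; while the map $\Phi_{\bm{p}}\colon\R^c\to T^*_{\bm{p}}\sphs$ of \eqref{Phi_p_general} is exactly the map of Proposition \ref{assumptions}, which asserts its surjectivity for every $\bm{p}\in\sphs$. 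Applying Theorem \ref{general_ineqs_thm} then yields finite convergence of the moment-SOS hierarchy for \eqref{POPmultisphere_ineqs} together with the flat truncation certificate, for $f$ outside a Lebesgue-measure-zero subset of $\R^c$.

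I do not expect a real obstacle here: the corollary is essentially bookkeeping once Theorem \ref{general_ineqs_thm}, Proposition \ref{assumptions}, and Lemma \ref{cgeqdim_lemma} are available. The only point I would state explicitly is that the surjectivity hypothesis of Theorem \ref{general_ineqs_thm} concerns the cotangent space of the equality-constraint manifold $\manif$ (not of the smaller set cut out by the strict inequalities), and since here $\manif = \sphs$ this is precisely the content of Proposition \ref{assumptions}, so the identification is immediate.
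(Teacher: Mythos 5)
Your proposal is correct and matches the paper's own argument: the paper also derives this corollary as an immediate instantiation of Theorem \ref{general_ineqs_thm}, invoking Proposition \ref{assumptions} and Lemma \ref{cgeqdim_lemma} from Section \ref{finiteconvergence_sec} for the surjectivity and dimension hypotheses, with Assumption \ref{LI_assumption} taken as given. Your extra remark about the codomain of $\Phi_{\bm{p}}$ being $T^*_{\bm{p}}\sphs$ is a correct and harmless clarification.
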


Recall that in Section \ref{finiteconvergence_sec}, we showed the map \eqref{Phi_p_general} is surjective for a generic multihomogeneous polynomial when $\manif=\sphs$. Thus, Corollary \ref{main_result_ineqs_thm} is an easy consequence of Theorem \ref{general_ineqs_thm}. The motivation for the more general statement, Theorem \ref{general_ineqs_thm}, is that it implies the following corollary about sets other than the product of spheres.

\begin{cor}\label{ineqs_cor}
Consider the polynomial optimization problem \eqref{POP_general_ineqs} and suppose that Assumption \ref{LI_assumption} holds. 
Then the moment-SOS hierarchy has finite convergence when:
\begin{enumerate}
\item The objective function is a generic polynomial. 
\item The objective function is a generic homogeneous polynomial and the manifold $\mathcal{K}$ does not contain $0$. 
\end{enumerate}
\end{cor}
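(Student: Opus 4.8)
The plan is to obtain both statements as direct consequences of Theorem~\ref{general_ineqs_thm}. Since Assumption~\ref{LI_assumption} is part of the hypothesis, Proposition~\ref{regular_level_set_thm} shows that $\manif$ is a manifold of dimension $N-m$ (we may assume $\manif\neq\emptyset$, as otherwise the statement is vacuous, which also forces $m\le N$). It therefore remains to check the two remaining hypotheses of Theorem~\ref{general_ineqs_thm}: that $c\ge\dim\manif$ and that the map $\Phi_p$ of \eqref{Phi_p_general} is surjective for every $p\in\manif$. I would fix an arbitrary degree $d\ge 1$ and read ``polynomial'' (resp.\ ``homogeneous polynomial'') as ``polynomial of degree at most $d$'' (resp.\ ``homogeneous polynomial of degree $d$''), so that $\R^c$ is the associated coefficient space; then $c=\binom{N+d}{N}$ in case~(1) and $c=\binom{N+d-1}{N-1}$ in case~(2), and in both cases $c\ge N\ge N-m=\dim\manif$, settling the dimension condition.

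The substantive point is the surjectivity of $\Phi_p$. I would use that $\Phi_p$ factors as $b\mapsto\nabla f_b(p)\mapsto d_pf_b$, where the second arrow is the restriction of linear functionals $(\R^N)^*\to T_p^*\manif$, which is surjective; hence it suffices to prove that $\{\nabla f_b(p)\;|\;b\in\R^c\}$ spans $\R^N$. In case~(1) this is immediate, since the coefficient space contains every coordinate function $x_1,\dots,x_N$ and $\nabla x_k(p)=e_k$ for all $p$, so the span is already $\R^N$ irrespective of whether $0\in\manif$. In case~(2), fix $p\in\manif$; as $0\notin\manif$ we have $p\neq 0$, so I would set $\ell(x)=\langle p,x\rangle/\|p\|^2$, which satisfies $\ell(p)=1$ and $\nabla\ell\equiv p/\|p\|^2$, and consider the degree-$d$ forms $\ell^d$ and $x_k\ell^{d-1}$ for $k\in[N]$. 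A one-line computation gives $\nabla(\ell^d)(p)=d\,p/\|p\|^2$ and $\nabla(x_k\ell^{d-1})(p)=e_k+(d-1)p_k\,p/\|p\|^2$, and taking suitable linear combinations recovers each $e_k$; thus the span is $\R^N$ and $\Phi_p$ is surjective. This is exactly where the assumption $0\notin\manif$ enters, since at $p=0$ the gradient of every homogeneous form of degree $\ge 2$ vanishes.

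Having verified that $c\ge\dim\manif$ and that $\Phi_p$ is surjective for all $p\in\manif$, while Assumption~\ref{LI_assumption} holds by hypothesis, Theorem~\ref{general_ineqs_thm} yields finite convergence of the moment-SOS hierarchy and the flat truncation certificate, proving the corollary. I do not expect a genuine obstacle here: the argument is essentially bookkeeping around Theorem~\ref{general_ineqs_thm}, and the only step that requires an idea rather than a computation is the choice of the auxiliary forms $\ell^d$ and $x_k\ell^{d-1}$ witnessing surjectivity at an arbitrary nonzero point in case~(2).
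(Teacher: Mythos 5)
Your proof is correct and follows the same overall route as the paper: both arguments reduce to Theorem~\ref{general_ineqs_thm} and verify surjectivity of $\Phi_p$ by exhibiting explicit degree-$d$ forms whose gradients at $p$ span $\R^N$ (and then using that restriction of functionals from $\R^N$ to $T_p\manif$ is onto); case~(1) is handled identically via the coordinate functions. The only divergence is in case~(2): the paper permutes coordinates so that $p=(0,\dots,0,p_{\ell+1},\dots,p_N)$ with the trailing entries nonzero and takes the monomials $x_i x_N^{d-1}$ for $i\le \ell$ and $x_i^d$ for $i>\ell$, whose gradients at $p$ form a diagonal matrix with nonzero entries, whereas you use the coordinate-free family $\ell^d$ and $x_k\ell^{d-1}$ with $\ell(x)=\langle p,x\rangle/\|p\|^2$ and recover each $e_k$ by the linear combination $\nabla(x_k\ell^{d-1})(p)-\tfrac{(d-1)p_k}{d}\nabla(\ell^d)(p)$. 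Your computation checks out (including the degenerate instance $d=1$) and avoids the paper's permutation and case split, so either choice of witnesses is fine.
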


The items of Corollary \ref{ineqs_cor} already follow from the techniques used in  \cite{lee_genericcpt_2016,lee_generic_2017,springarn_optcondi_1982}. Note that the second item generalizes the main result of \cite{huang_homosphere_2023}, Theorem \ref{sphere_finite}, to other sets. In particular the moment-SOS hierarchy has finite convergence for polynomial optimization over the simplex, generically.

Like in Section \ref{finiteconvergence_sec}, we use Theorem \ref{KKTfiniteconvergence} to prove finite convergence. To use Theorem \ref{KKTfiniteconvergence}, we again employ techniques from differential geometry, references for which include \cite{guillemin_difftop_2010,lee_smoothmani_2013,tu_manifolds_2011}.

\subsection{Parametric Transversality}\label{transversality_sec}

We introduce the notion of transversality, and a Sard type result discussing its genericity. We define a \textit{submanifold} $\mathcal{N}$ of $\manif$, as a subset of $\manif$ which is also a manifold of some dimension. We now define the notion of transversality, more details can be found in \cite[Chapter 1.5]{guillemin_difftop_2010} or \cite[Chapter 6]{lee_smoothmani_2013}. 
\begin{defn}\label{transversality_def}
For manifolds $\mathcal{M}$ and $\mathcal{K}$, we say a map $\psi: \mathcal{M} \rightarrow \mathcal{K}$ is \emph{transverse to a submanifold $\mathcal{N}\subset \mathcal{K}$}, if 
\[
d_p \psi\big(T_p \mathcal{M}\big) + T_{\psi(p)} \mathcal{N} = T_{\psi(p)} \mathcal{K},\quad \text{for all} \; p \; \text{in the preimage} \; \psi^{-1}(\mathcal{N}).
\]
\end{defn}
Note that in the definition above, the inclusion `$\subseteq$' always holds.

A typical example of transversality is a map which embeds a line in $\R^3$, the image of which intersects a plane in just one point. Furthermore, for a line and a plane in $\R^3$ containing the origin, suppose the intersection is such that the line is fully contained in the plane. Then a small perturbation of the entries of the vector describing the line would cause the intersection to be just the origin. The following Sard type theorem is a formalization and generalization of this example, and it is known as the Parametric Transversality Theorem, see \cite[Chapter 2.3]{guillemin_difftop_2010} or \cite[Theorem 6.35]{lee_smoothmani_2013} for details.
\begin{thm}[Parametric Transversality]\label{transversality_parametric}
Suppose $\mathcal{M}$ and $\mathcal{K}$ are manifolds, and that $\mathcal{N}\subset \mathcal{K}$ is a submanifold. Consider a smooth function 
\[F: \R^c \times \mathcal{M} \rightarrow \mathcal{K}, \;\;\text{with}\;\;f_b = F(b,\cdot) \;\; \text{for}\;\; b\in \R^c.\]
Suppose $F$ is transverse to $\mathcal{N}$. Then there exists a Lebesgue measure zero set $\mathcal{V}\subset \R^c$, such that for all $b\notin \mathcal{V}$ the function $f_b$ is transverse to $\mathcal{N}$. 
\end{thm}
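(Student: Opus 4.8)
The plan is to run the classical argument that converts a statement about the whole family $f_b$ into Sard's theorem applied to a single projection. First I would observe that, since $F$ is transverse to $\mathcal{N}$, the preimage theorem for transverse maps (see \cite[Chapter 1.5]{guillemin_difftop_2010}; this extends Proposition \ref{regular_level_set_thm}, which is the case $\mathcal{N}=\{0\}$) yields that $\trans := F^{-1}(\mathcal{N})$ is a submanifold of $\R^c\times\mathcal{M}$ with $\codim\trans = \codim_{\mathcal{K}}\mathcal{N}$, and that its tangent spaces are $T_{(b,x)}\trans = (d_{(b,x)}F)^{-1}\bigl(T_{F(b,x)}\mathcal{N}\bigr)$; if $\trans=\emptyset$ then $f_b^{-1}(\mathcal{N})=\emptyset$ for every $b$ and the theorem is vacuous, so I assume otherwise. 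Next I would form the restriction $\pi\colon\trans\to\R^c$ of the first-coordinate projection $(b,x)\mapsto b$, cover $\trans$ by countably many coordinate charts, apply Theorem \ref{sard} (Sard's theorem) to $\pi$ read through each chart, and take the countable union of the resulting null sets to obtain a Lebesgue measure zero set $\mathcal{V}\subset\R^c$ containing all critical values of $\pi$. The claim then follows once I show: every regular value $b\notin\mathcal{V}$ of $\pi$ makes $f_b$ transverse to $\mathcal{N}$.

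For that last step I would fix such a $b$ and a point $x\in\mathcal{M}$ with $f_b(x)\in\mathcal{N}$, so that $(b,x)\in\trans$, and take an arbitrary $w\in T_{f_b(x)}\mathcal{K}$. Transversality of $F$ at $(b,x)$ provides a decomposition $w = d_{(b,x)}F(a,v) + n$ with $(a,v)\in T_b\R^c\times T_x\mathcal{M}$ and $n\in T_{f_b(x)}\mathcal{N}$. Since $b$ is a regular value, $d_{(b,x)}\pi\colon T_{(b,x)}\trans\to T_b\R^c$ is surjective, so I can pick $(a,v')\in T_{(b,x)}\trans$ with the same first component $a$; the tangent-space formula above then gives $d_{(b,x)}F(a,v')\in T_{f_b(x)}\mathcal{N}$. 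Using $F(b,\cdot)=f_b$, hence $d_{(b,x)}F(0,\cdot)=d_xf_b$, I would subtract to obtain
\[
w \;=\; d_{(b,x)}F(a,v') \;+\; d_xf_b(v-v') \;+\; n \;=\; \underbrace{d_xf_b(v-v')}_{\in\, d_xf_b(T_x\mathcal{M})} \;+\; \underbrace{\bigl(d_{(b,x)}F(a,v')+n\bigr)}_{\in\, T_{f_b(x)}\mathcal{N}},
\]
so that $d_xf_b(T_x\mathcal{M}) + T_{f_b(x)}\mathcal{N} = T_{f_b(x)}\mathcal{K}$; letting $x$ range over $f_b^{-1}(\mathcal{N})$ shows $f_b$ is transverse to $\mathcal{N}$, which is the assertion.

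The hard part is really just the diagram chase in the second paragraph, where the hypothesis ``$b$ is a regular value of $\pi$'' --- i.e.\ surjectivity of $d_{(b,x)}\pi$ --- is precisely what lets me replace the parameter direction $a$ by a genuine tangent vector of $\trans$, pushing the leftover discrepancy into $d_xf_b(T_x\mathcal{M})$. The only other points needing care are bookkeeping: invoking the transverse-preimage theorem together with its tangent-space description (this is what makes $\pi$ a smooth map on a manifold and supplies the inclusion used in the chase), and the routine upgrade of Sard's theorem from the Euclidean form of Theorem \ref{sard} to a map with manifold domain via charts and a countable union of null sets.
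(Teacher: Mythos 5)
Your proof is correct and is precisely the classical argument for the Parametric Transversality Theorem. The paper does not supply its own proof of this statement --- it cites \cite[Chapter 2.3]{guillemin_difftop_2010} and \cite[Theorem 6.35]{lee_smoothmani_2013} --- and your three steps (the transverse-preimage theorem applied to $F$ to make $\trans = F^{-1}(\mathcal{N})$ a submanifold with the stated tangent spaces, Sard's theorem applied via charts to the projection $\trans \to \R^c$, and the diagram chase showing that regular values of the projection yield transversality of $f_b$) reproduce exactly the proof given in those references, with the key substitution $d_{(b,x)}F(a,v) - d_{(b,x)}F(a,v') = d_x f_b(v-v')$ handled correctly.
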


Note that Remark \ref{measure_zero_rmk} also applies to the Lebesgue measure zero set $\mathcal{V}$ in the above statement.

\subsection{Finite Convergence with Inequalities}

Let $h_1,\dots,h_m,g_1,\dots, g_{\bar{m}}$ be the polynomials defining the feasible set in \eqref{POP_general_ineqs}, and suppose Assumption \ref{LI_assumption} is satisfied. Similarly to Sections \ref{morse_sec} and \ref{finiteconvergence_sec}, we let $\R^c$ be the space of coefficients of a linear space of polynomials with a basis $q_1(x), \dots, q_c(x)$. For $b\in \R^c$, we associate a polynomial via the map
\[F(b,x) = f_b(x) = \sum_{i=1}^c b_i q_i(x).\]
In the case of multihomogeneous polynomials, this map has the following form,
\[
F(b,\bm{x}) = f_b(\bm{x})\;=\sum_{\substack{\alpha_1 \in \N^{n_1} \dots \,\alpha_m \in \N^{n_m} \\ |\alpha_1| = d_1 \dots \,|\alpha_m| = d_m}} \; b_\alpha \bm{x}_1^{\alpha_1} \,\dots\,\bm{x}_m^{\alpha_m}.
\]

In this subsection, we prove Theorem \ref{general_ineqs_thm} by using the finite convergence result Theorem \ref{KKTfiniteconvergence}. We show that the assumptions of Theorem \ref{KKTfiniteconvergence}, regarding local optimality conditions (introduced in Subsection \ref{local_opt_cond_section}), hold for a generic objective function. That is, we show for all $b$ outside a Lebesgue measure zero set in $\R^c$, the conditions \eqref{LIC}, \eqref{SCC}, and \eqref{SOSC} hold at every global minimizer of \eqref{POP_general_ineqs} with objective function $f_b$.

By Assumption \ref{LI_assumption}, we have that \eqref{LIC} holds for all global minimizers. The tools developed in the previous section give \eqref{SOSC}. Note that, $\manif_J$ is a subset of the feasible set of problem \eqref{POP_general_ineqs}, and is a manifold of dimension $\dim(\manif) - |J|$, by Proposition \ref{regular_level_set_thm}.

\begin{prop}\label{SOSC_ineq_prop}
Consider the problem  \eqref{POP_general_ineqs} with objective function $f_b$, and such that assumptions of Theorem \ref{general_ineqs_thm} are satisfied. Then, there exists a Lebesgue measure zero set $\mathcal{Z}\subset \R^c$, such that for every $b\notin \mathcal{Z}$ \eqref{SOSC} holds for every global minimizer of \eqref{POP_general_ineqs}.
\end{prop}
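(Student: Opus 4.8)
The plan is to stratify the feasible set of \eqref{POP_general_ineqs} according to which inequality constraints are active, and then apply the genericity result Theorem \ref{nondeg_generic} on each stratum separately. Concretely, for each $J\subseteq[\bar m]$ with $\manif_J$ nonempty, Assumption \ref{LI_assumption} together with Proposition \ref{regular_level_set_thm} makes $\manif_J$ a manifold of dimension $\dim(\manif)-|J|$. On $\manif_J$ the problem \eqref{POP_general_ineqs} looks locally like an \emph{equality}-constrained problem (the active $g_j$ become equalities, the inactive ones hold strictly in a neighborhood), so a global minimizer $\bm p$ of \eqref{POP_general_ineqs} that lies in $\manif_J$ is in particular a local minimizer of $f_b$ restricted to $\manif_J$; moreover, the Lagrange multipliers coming from the manifold $\manif_J$ agree with the KKT multipliers of \eqref{POP_general_ineqs} at $\bm p$ (with the inactive-constraint multipliers zero by \eqref{CC}). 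This is the bridge furnished by Propositions \ref{criticalpoint_lagrangian} and \ref{criticalpoint_nondeg_lagrangian}.

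Next I would verify the hypotheses of Theorem \ref{nondeg_generic} with $\manif$ replaced by $\manif_J$. The dimension bound $c\ge\dim\manif\ge\dim\manif_J$ holds by assumption. For surjectivity of the restricted map $\Phi_{\bm p}\colon\R^c\to T^*_{\bm p}\manif_J$: the assumed surjectivity of $\Phi_{\bm p}\colon\R^c\to T^*_{\bm p}\manif$ combined with the fact that $T_{\bm p}\manif_J\subseteq T_{\bm p}\manif$ is a subspace (so $T^*_{\bm p}\manif_J$ is a quotient, and $\Phi^{\manif_J}_{\bm p}$ is the composition of $\Phi^{\manif}_{\bm p}$ with the restriction map $T^*_{\bm p}\manif\twoheadrightarrow T^*_{\bm p}\manif_J$) yields surjectivity onto $T^*_{\bm p}\manif_J$ as a composition of surjections. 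Hence Theorem \ref{nondeg_generic} provides a Lebesgue-null set $\mathcal Z_J\subset\R^c$ such that for $b\notin\mathcal Z_J$ the function $f_b$ is Morse on $\manif_J$; by Proposition \ref{criticalpoint_nondeg_lagrangian} this says every critical point of $f_b$ on $\manif_J$ is nondegenerate, i.e.\ $v^T\nabla^2_x\mathcal L(\bm p,\lambda)v\neq 0$ for all $v\in T_{\bm p}\manif_J\setminus\{0\}$. Since $T_{\bm p}\manif_J=\bigl(\bigcap_i\nabla_x h_i(\bm p)^\perp\bigr)\cap\bigl(\bigcap_{j\in J(\bm p)}\nabla_x g_j(\bm p)^\perp\bigr)$ by Lemma \ref{tanspace_lemma} applied to $\manif_J$, and at a minimizer \eqref{SONC} forces the quadratic form to be $\ge 0$ on this space, nondegeneracy upgrades this to $>0$, which is exactly \eqref{SOSC}. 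Finally set $\mathcal Z=\bigcup_{J\,:\,\manif_J\ne\emptyset}\mathcal Z_J$; this is a finite union (there are $2^{\bar m}$ subsets $J$) of Lebesgue-null sets, hence Lebesgue-null, and for $b\notin\mathcal Z$ every global minimizer of \eqref{POP_general_ineqs} lies in some stratum $\manif_J$ and there satisfies \eqref{SOSC}.

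The main obstacle I expect is the bookkeeping around the multipliers and the identification of the relevant tangent space: one must check carefully that the ``intrinsic'' Lagrange multipliers of $f_b|_{\manif_J}$ (which control nondegeneracy via Proposition \ref{criticalpoint_nondeg_lagrangian}) coincide with the KKT multipliers $\{\lambda_i\}\cup\{\mu_j\}$ of the original problem appearing in \eqref{SOSC}, so that the Hessian of the Lagrangian in Proposition \ref{criticalpoint_nondeg_lagrangian} is literally $\nabla^2_x\mathcal L(\bm p,\lambda,\mu)$ with the inactive $\mu_j=0$; this uses \eqref{LIC} (hence uniqueness of multipliers) from Assumption \ref{LI_assumption}. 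A secondary point requiring a line of care is that a global minimizer of the inequality-constrained problem need only be a \emph{local} minimizer on its active stratum $\manif_J$ — but that is all that is needed, since \eqref{SONC} and the Morse property are local statements. The surjectivity transfer to the quotient cotangent space, while conceptually clean, should also be spelled out since it is the step that makes the single hypothesis ``$\Phi_{\bm p}$ surjective on $T^*_{\bm p}\manif$'' do all the work across every stratum.
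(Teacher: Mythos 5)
Your proposal is correct and follows essentially the same route as the paper: stratify by the active set $J$, transfer surjectivity of $\Phi_{\bm p}$ to $T^*_{\bm p}\manif_J$ via restriction of functionals, apply Theorem \ref{nondeg_generic} on each stratum, and take the finite union of the null sets $\mathcal{Z}_J$. The paper's proof is just a terser version of yours; the extra care you take with the identification of multipliers and the upgrade from \eqref{SONC} to \eqref{SOSC} is implicit in the paper via Proposition \ref{criticalpoint_nondeg_lagrangian} and the discussion in Section \ref{finiteconvergence_sec}.
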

\begin{proof}
By Lemma \ref{tanspace_lemma}, it is immediate that $T_{\bm{p}}\manif_J\subset T_{\bm{p}}\manif$, for any $J\subseteq [\bar{m}]$. Thus, we have that the map $\Phi_p$, with properly adjusted codomain, is surjective onto $T_{\bm{p}}^*\manif_J$, at every $\bm{p}\in \manif_J$ and every $J\subseteq [\bar{m}]$. 
Thus, for every $J\subseteq[\bar{m}]$, Theorem \ref{nondeg_generic} gives a Lebesgue measure zero set $\mathcal{Z}_J\subset \R^c$, such that for every $b\notin \mathcal{Z}_J$ \eqref{SOSC} holds for every local minimizer of $f_b$ on $\manif_J$. This is a superset of the set of global minimizers. The finite union of Lebesgue measure zero sets is again Lebesgue measure zero, so setting $\mathcal{Z} = \cup_{J\subset[\bar{m}]}\mathcal{Z}_J$ gives the result.
\end{proof}

The remainder of this subsection is dedicated to showing that \eqref{SCC} holds for all global minimizers, generically. For an arbitrary global minimizer $\bm{p}_0$ of \eqref{POP_general_ineqs}, we recall some of the notions introduced in Subsection \ref{local_opt_cond_section}. For $j$ in $[\bar{m}]$, let $\mu_j$ be the Lagrange multiplier of $g_j$ at $\bm{p}_0$, as in \eqref{lagrangian_function}. Further, let $J(\bm{p}_0)\subset[\bar{m}]$ be the set of the active constraints of $\bm{p}_0$. Since $\bm{p}_0$ is a local minimum, we have by Proposition \ref{local_min_conditions} that \eqref{CC} is satisfied at $\bm{p}_0$, meaning,
\begin{equation}\label{CC_specific}
\begin{aligned}
\mu_j \geq 0, \;\; &\text{for all} \;\; j \in J(\bm{p}_0),\\
\mu_j=0, \;\; &\text{for all} \;\; j \in [\bar{m}]\setminus J(\bm{p}_0).
\end{aligned}
\end{equation}
To show that \eqref{SCC} also holds at $\bm{p}_0$, we need to show that, additionally to the conditions in \eqref{CC_specific}, we generically have that 
\begin{equation}\label{SCC_specific}
\mu_j>0, \;\; \text{for all} \quad j\in J(\bm{p}_0).
\end{equation}

We give a high level description of the proof. We proceed by showing that for any $J \subseteq [\bar{m}]$, all the critical points, (a superset of the global minimizers) which have $J$ as their set of active constraints, have Lagrange multipliers satisfying \eqref{SCC_specific}, generically. To do this, we employ Parametric Transversality, Theorem \ref{transversality_parametric}. We construct a parameterized map, which captures data about critical points, and a manifold, with Lagrange multiplier data. We show that these are transverse in the sense of Definition \ref{transversality_def}. Then the result of Theorem \ref{transversality_parametric} gives transversality for a fixed generic parameter. To finish, a dimension counting argument relying on the definition of transversality shows \eqref{SCC_specific}.

\begin{defn}
For a smooth map $f: \R^n \rightarrow \R^m$, and a manifold $\mathcal{M}\subset\R^n$, if 
\begin{enumerate}
    \item the differential $d_pf$ is injective for all $p\in \mathcal{M}$, and
    \item the restriction $f_{|\mathcal{M}}$ is a homeomorphism,
\end{enumerate}
we say $f$ is an \textit{embedding} of $\mathcal{M}$.
\end{defn}
A map satisfying the first condition in the definition above is often referred to as an immersion in the literature. The following theorem gives that an embedding preserves the manifold structure.
\begin{thm}[{\cite[Theorem 11.13]{tu_manifolds_2011}}]\label{embedding_thm}
Given a manifold $\mathcal{M}\subset \R^n$ and $f:\R^n\rightarrow\R^m$ an embedding of $\mathcal{M}$, the image $f(\mathcal{M})\subset \R^m$ is also a manifold.
\end{thm}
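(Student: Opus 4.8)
The plan is to reduce this to the local normal form of an immersion, and then to use the topological hypothesis (2) to control the image globally. Fix a point $q = f(p)$ with $p \in \mathcal{M}$ and write $k = \dim \mathcal{M}$. Using a manifold chart of $\mathcal{M}$ near $p$ from the definition of a $k$-dimensional manifold, restrict its smooth inverse to the slice $\pi_k(\R^n) \cong \R^k$ to obtain a smooth local parametrization $g \colon \Omega \to \R^n$ on an open $\Omega \subseteq \R^k$ with $0 \in \Omega$, $g(0) = p$, $g$ a homeomorphism onto an open subset $U \cap \mathcal{M}$ of $\mathcal{M}$, and $d_y g$ injective with image $T_{g(y)}\mathcal{M}$ for every $y \in \Omega$. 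Then I would compose with $f$ to get $\alpha = f \circ g \colon \Omega \to \R^m$; by the chain rule \eqref{chain_rule} its differential $d_y \alpha = d_{g(y)} f \circ d_y g$ is a composition of two injective linear maps — this is exactly where condition (1) of the definition of an embedding is used — so $\alpha$ is a smooth immersion on all of $\Omega$.

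Next I would invoke the canonical form theorem for immersions, a standard consequence of the inverse function theorem (see \cite[Chapter 1]{guillemin_difftop_2010} or \cite{tu_manifolds_2011}): after shrinking $\Omega$ to a neighbourhood $\Omega_0$ of $0$, there exist an open set $W_0 \ni q$ in $\R^m$ and a diffeomorphism $\Psi$ from $W_0$ onto an open subset of $\R^m$ such that $\Psi \circ \alpha(y) = (y, 0, \dots, 0)$ for all $y \in \Omega_0$; after shrinking $W_0$ to a product neighbourhood one gets $\Psi\bigl(\alpha(\Omega_0)\bigr) = \Psi(W_0) \cap \pi_k(\R^m)$. This already shows the immersed piece $\alpha(\Omega_0)$ is a $k$-dimensional manifold, but not yet that $f(\mathcal{M})$ is.

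The main obstacle, and the only step that is not purely formal, is to rule out $f(\mathcal{M})$ accumulating onto $\alpha(\Omega_0)$ from elsewhere — the phenomenon that makes a figure-eight immersion of a line, or an irrational line on a torus, fail to be an embedded submanifold even though it is everywhere an injective immersion. This is exactly where condition (2) enters. Since $g$ is a homeomorphism onto the open subset $U \cap \mathcal{M}$ of $\mathcal{M}$ and $f|_{\mathcal{M}}$ is a homeomorphism onto $f(\mathcal{M})$ with its subspace topology from $\R^m$, the map $\alpha$ is a homeomorphism of $\Omega$ onto $f(U \cap \mathcal{M})$, which is open in $f(\mathcal{M})$; hence $\alpha(\Omega_0)$ is open in $f(\mathcal{M})$ as well, so there is an open set $W \subseteq W_0$ in $\R^m$ with $W \cap f(\mathcal{M}) = W \cap \alpha(\Omega_0)$. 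Finally I would set $\psi = \Psi|_W \colon W \to V := \Psi(W) \subset \R^m$, a smooth bijection with smooth inverse; by the last two paragraphs $\psi\bigl(W \cap f(\mathcal{M})\bigr) = V \cap \pi_k(\R^m)$ (shrinking $V$ to a product neighbourhood if necessary). Since $q \in f(\mathcal{M})$ was arbitrary, $f(\mathcal{M})$ satisfies the definition of a $k$-dimensional manifold in $\R^m$, which is the claim.
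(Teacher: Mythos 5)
The paper does not prove this statement; it is quoted from Tu's textbook (Theorem 11.13) and used as a black box, so there is no internal proof to compare against. Your argument is the standard one — local parametrization of $\mathcal{M}$, the canonical (local immersion) form for $f\circ g$ via the inverse function theorem, and then the homeomorphism hypothesis to show $\alpha(\Omega_0)$ is open in $f(\mathcal{M})$ so that a single ambient open set $W$ separates the slice from the rest of the image — and it is correct, including the correct identification of where condition (2) is indispensable and the shrinking of $V$ to a product neighbourhood at the end.
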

Since a homeomorphism preserves dimension and the differential of an embedding is injective, we have that embeddings preserve tangent spaces. With the notation of Theorem \ref{embedding_thm}, we have that $d_pf(T_p\manif) = T_{f(p)}\big(f(\manif)\big)$. 
For more details, we refer to the discussion preceding \cite[Proposition 5.35]{lee_smoothmani_2013} or \cite[Lemma 1.2.10]{salamon_difftop_lecturenotes}. 

Given $J \subseteq [\bar{m}]$, let $\manif_J$ be as in Assumption \ref{LI_assumption}. Consider the map $\psi_{J} \colon \R^N \times \R^{m} \times \R^{|J|} \to \R^{2N}$ defined by
\[
\psi_{J}(\bm{p}, \lambda, \mu) = \Big(\bm{p}, \sum_{i=1}^m \lambda_i \nabla_x h_i(\bm{p}) + \sum_{j \in J} \mu_{j} \nabla_x g_{j}(\bm{p})\Big).
\]
Given a subset $\hat{J}\subseteq J$, we are interested in the image $\psi_{\hat{J}}(\manif_J \times \R^{m} \times \R^{|\hat{J}|})$, and will show it is a manifold by showing that the map $\psi_{\hat{J}}$ is an embedding of $\manif_J \times \R^{m} \times \R^{|\hat{J}|}$.

\begin{lem}\label{embedding_lemma}
For $\hat{J}\subseteq J$, the map $\psi_{\hat{J}}$ is an embedding of $\manif_J\times \R^{m} \times \R^{|\hat{J}|}$. Furthermore, $\psi_{\hat{J}}(\manif_J \times \R^{m} \times \R^{|\hat{J}|})$ is a submanifold of $\manif_J \times \R^{N}$ of dimension $N - |J| + |\hat{J}|$.
\end{lem}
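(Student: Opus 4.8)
The plan is to verify the two conditions in the definition of an embedding — injectivity of the differential everywhere and the homeomorphism property of the restriction — and then read off the dimension. First I would unpack the structure of $\psi_{\hat J}$: it is the identity on the first $\R^N$ factor, so we can write $\psi_{\hat J} = (\id, \Psi_{\hat J})$ where $\Psi_{\hat J}(\bm p,\lambda,\mu) = \sum_{i=1}^m \lambda_i \nabla_x h_i(\bm p) + \sum_{j\in\hat J}\mu_j \nabla_x g_j(\bm p)$. Because the first component is already the identity on the $\bm p$-coordinates, the map is automatically injective on the nose (two points with the same image have the same $\bm p$; then the linear-independence Assumption \ref{LI_assumption}, applied at $\bm p \in \manif_J$ to the gradients $\{\nabla_x h_i(\bm p)\}_{i=1}^m \cup \{\nabla_x g_j(\bm p)\}_{j\in J} \supseteq \{\nabla_x h_i(\bm p)\}_i \cup \{\nabla_x g_j(\bm p)\}_{j\in\hat J}$, forces $\lambda$ and $\mu$ to coincide as well). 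The same linear-independence statement gives that $\psi_{\hat J}^{-1}$ is continuous: recovering $(\lambda,\mu)$ from $\Psi_{\hat J}(\bm p,\lambda,\mu)$ is solving a linear system with a matrix of full column rank that depends continuously (indeed smoothly) on $\bm p$, so the inverse is continuous; hence $\psi_{\hat J}$ restricted to $\manif_J\times\R^m\times\R^{|\hat J|}$ is a homeomorphism onto its image.

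Next I would handle injectivity of the differential. The differential of $\psi_{\hat J}$ at $(\bm p,\lambda,\mu)$, restricted to the tangent space $T_{\bm p}\manif_J \times \R^m\times\R^{|\hat J|}$, sends $(v, \dot\lambda, \dot\mu)$ to $\bigl(v,\; \sum_i \dot\lambda_i \nabla_x h_i(\bm p) + \sum_{j\in\hat J}\dot\mu_j \nabla_x g_j(\bm p) + D_{\bm p}\Psi_{\hat J}(\cdot)[v]\bigr)$, where the last term collects the Hessian contributions $\sum_i \lambda_i \nabla_x^2 h_i(\bm p)v + \sum_{j\in\hat J}\mu_j \nabla_x^2 g_j(\bm p)v$. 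If this vanishes, then the first coordinate forces $v=0$, and then the second coordinate reduces to $\sum_i \dot\lambda_i \nabla_x h_i(\bm p) + \sum_{j\in\hat J}\dot\mu_j \nabla_x g_j(\bm p)=0$, which by the linear independence in Assumption \ref{LI_assumption} forces $\dot\lambda=0$ and $\dot\mu=0$. So $d\psi_{\hat J}$ is injective, and by the preceding paragraph $\psi_{\hat J}$ is an embedding; Theorem \ref{embedding_thm} then shows the image is a manifold. Its dimension equals that of the domain, namely $\dim\manif_J + m + |\hat J|$, and since $\manif_J$ has dimension $\dim\manif - |J| = (N-m) - |J|$ by Proposition \ref{regular_level_set_thm}, this is $(N-m-|J|) + m + |\hat J| = N - |J| + |\hat J|$, as claimed. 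Finally I would note that the image lands in $\manif_J\times\R^N$ (the first coordinate stays in $\manif_J$), and is a submanifold there because it is a manifold sitting inside another manifold.

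The main obstacle I anticipate is purely bookkeeping rather than conceptual: one must be careful that Assumption \ref{LI_assumption} is invoked with the correct index set — it guarantees linear independence of $\{\nabla_x h_i(\bm p)\}_{i=1}^m \cup \{\nabla_x g_j(\bm p)\}_{j\in J}$ for $\bm p\in\manif_J$, and since $\hat J\subseteq J$ any subfamily indexed by $\hat J$ inherits this independence, which is exactly what both the injectivity of $d\psi_{\hat J}$ and the continuity of the inverse need. A secondary subtlety is that $\manif_J\times\R^m\times\R^{|\hat J|}$ is a manifold with boundary only if $\manif_J$ is (it is not, being a regular level set), so there is no boundary issue; and one should make sure the tangent space of the product is the product of tangent spaces, which is standard. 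No step here seems likely to fail; the content is all front-loaded into Assumption \ref{LI_assumption}.
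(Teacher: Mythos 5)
Your proposal is correct and follows essentially the same route as the paper's proof: decompose $\psi_{\hat J}$ as the identity on the $\bm p$-factor paired with the linear-in-$(\lambda,\mu)$ map given by the full-column-rank gradient matrix, deduce injectivity of both the map and its differential from Assumption \ref{LI_assumption}, obtain continuity of the inverse by solving the resulting full-rank linear system, and conclude via Theorem \ref{embedding_thm} with the same dimension count.
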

\begin{proof}
Fix $\hat{J}\subseteq J$. To simplify notation, we write $|\hat{J}| =\ell$, $\hat{J} = \{j_1,\dots,j_\ell\}$, and $\psi_{\hat{J}} = \psi$. Write the map in the form
\[
\psi(\bm{p},\lambda,\mu)
=
\bigl(\bm{p}, A(\bm{p})(\lambda,\mu)\bigr),
\]
where $A(\bm{p})$ is the $N\times (m+\ell)$ matrix whose columns are the vectors
\[
\nabla_x h_1(\bm{p}),\dots,\nabla_x h_m(\bm{p}), \nabla_x g_{j_1}(\bm{p}),\dots, \nabla_x g_{j_\ell}(\bm{p}).
\]
By assumption, these vectors are linearly independent for all $\bm{p}\in \manif_J$, so $A(\bm{p})$ has full column rank.

We begin by showing the differential $d_{(\bm{p},\lambda,
\mu)}\psi$ is injective for all $(\bm{p},\lambda,
\mu)\in \manif_J \times \R^{m} \times \R^{\ell}$. Let $(v,\dot\lambda,\dot\mu)\in T_{(\bm{p},\lambda,\mu)}(\manif_J\times\R^m\times\R^\ell)$.
A calculation gives
\begin{equation}\label{diff_of_psi}
\begin{aligned}
d_{(\bm{p},\lambda,\mu)}\psi(v,\dot\lambda,\dot\mu) =
\Big(v,\;
\sum_{i=1}^m \lambda_i\nabla_x^2 h_i(\bm{p})v
\;+\;&
\sum_{j\in\hat{J}} \mu_j\nabla_x^2 g_j(\bm{p})v\\
\;+\;&
\sum_{i=1}^m \dot\lambda_i\nabla_x h_i(\bm{p})
\;+\;
\sum_{j\in\hat{J}} \dot\mu_j\nabla_x g_j(\bm{p})
\Big).
\end{aligned}
\end{equation}
If the above equals $(\bm{0},\bm{0})\in \R^N\times \R^N$, then the first component gives $v=\bm{0}$, and the second reduces to
\[
\sum_{i=1}^m \dot\lambda_i\nabla_x h_i(\bm{p})
+
\sum_{j\in\hat{J}} \dot\mu_j\nabla_x g_j(\bm{p})=0.
\]
The linear independence assumption forces $\dot\lambda=\dot\mu=0$, and hence $d_{(\bm{p},\lambda,\mu)}\psi$ is injective.

From its definition, we can see that $\psi$ is continuous. Thus, to show that the restriction of $\psi$ is a homeomorphism onto its image, we show it is injective, and has a continuous inverse. Suppose $\psi(\bm{p},\lambda,\mu)=\psi(\bm{p}',\lambda',\mu')$. From the first coordinate we obtain $\bm{p}=\bm{p}'$. From the second coordinate,
\[
\sum_{i=1}^m (\lambda_i-\lambda'_i)\nabla_x h_i(\bm{p})
+
\sum_{j\in\hat{J}} (\mu_j-\mu'_j)\nabla_x g_j(\bm{p})
=0.
\]
Linear independence of the gradients implies $\lambda=\lambda'$ and $\mu=\mu'$, so $\psi$ is injective. Since the first coordinate of $\psi$ is the identity on $\bm{p}$, any point $\psi(\bm{p},\lambda,\mu)=(\bm{p},w)$ determines $\bm{p}$ immediately. For fixed $\bm{p}$, the equation
\[
A(\bm{p})(\lambda,\mu)=w
\]
has a unique solution since $A(\bm{p})$ has full column rank. Moreover,
\[
(\lambda,\mu)
=
\Big(A(\bm{p})^T A(\bm{p})\Big)^{-1}A(\bm{p})^T w
\]
depends continuously on $(\bm{p},w)$, as it is a composition of continuous maps:

\begin{itemize}
\item $\bm{p} \mapsto A(\bm{p})$ is continuous because $\nabla h_i(\bm{p})$ and $\nabla g_j(\bm{p})$ are continuous.
\item $A(\bm{p}) \mapsto A(\bm{p})^T A(\bm{p})$ is continuous as it is obtained by continuous matrix multiplication.
\item Since $A(\bm{p})^T A(\bm{p})$ is invertible for all $\bm{p} \in \manif_J$ (full column rank), the inverse map
      $M \mapsto M^{-1}$ is continuous on invertible matrices.
\item $(\bm{p},w) \mapsto A(\bm{p})^T w$ is continuous by continuous matrix--vector multiplication.
\end{itemize}
Hence, the inverse of the restriction of $\psi$ exists and is continuous, so the restriction of $\psi$ is a homeomorphism.

Thus, the map $\psi$ is an embedding of $\manif_J\times \R^{m} \times \R^{\ell}$, and, by Theorem \ref{embedding_thm}, we have that $\psi\big(\manif_J \times \R^{m} \times \R^{\ell}\big)$ is a submanifold of $\manif_J \times \R^{N}$.

Finally, since  $\dim \manif = N-m$, we have $\dim \manif_J= (N-m)-|J|$.
Therefore,
\[
\dim\big(\manif_J\times\mathbb{R}^m\times\mathbb{R}^\ell\bigr)
=
(N-m-|J|)+m+\ell
= N-|J|+\ell.
\]
This equals the dimension of the embedded submanifold $\psi(\manif_J\times\mathbb{R}^m\times\mathbb{R}^\ell)$, by Theorem \ref{embedding_thm}.
\end{proof}

With the notation recalled at the beginning of this subsection, we consider the map 
\[
\begin{aligned}
G \colon \R^{c} \times \manif_J \; &\longrightarrow \; \manif_J \times \R^{N}\\ 
(b,\bm{p})\; &\longmapsto \;(\bm{p}, \nabla_x f_b(\bm{p})).
\end{aligned}
\]

\begin{lem}
Given $\hat{J}\subseteq J \subseteq [\bar{m}]$, the map $G$ is transverse to the submanifold $\psi_{\hat{J}}(\manif_J\times \R^{m} \times \R^{|\hat{J}|})$.
\end{lem}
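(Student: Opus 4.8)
The plan is to verify the transversality condition of Definition \ref{transversality_def} directly at every point in the preimage $G^{-1}\bigl(\psi_{\hat{J}}(\manif_J\times\R^m\times\R^{|\hat{J}|})\bigr)$. So fix a point $(b_0,\bm{p}_0)$ in this preimage; by definition of $\psi_{\hat{J}}$ this means $G(b_0,\bm{p}_0)=(\bm{p}_0,\nabla_x f_{b_0}(\bm{p}_0))$ lies in the image of $\psi_{\hat{J}}$, so there exist multipliers $\lambda\in\R^m$ and $\mu\in\R^{|\hat{J}|}$ with
\[
\nabla_x f_{b_0}(\bm{p}_0)=\sum_{i=1}^m \lambda_i\nabla_x h_i(\bm{p}_0)+\sum_{j\in\hat{J}}\mu_j\nabla_x g_j(\bm{p}_0).
\]
Write $\bm{q}=(\bm{p}_0,\nabla_x f_{b_0}(\bm{p}_0))$ for the common image point. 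The goal is to show
\[
d_{(b_0,\bm{p}_0)}G\bigl(T_{(b_0,\bm{p}_0)}(\R^c\times\manif_J)\bigr)+T_{\bm{q}}\,\psi_{\hat{J}}(\manif_J\times\R^m\times\R^{|\hat{J}|})=T_{\bm{q}}(\manif_J\times\R^N).
\]

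First I would compute the left-hand summands explicitly. Since the first coordinate of $G$ is the projection $(b,\bm{p})\mapsto\bm{p}$, the image of $d_{(b_0,\bm{p}_0)}G$ already contains every vector of the form $(v,*)$ with $v\in T_{\bm{p}_0}\manif_J$ arbitrary; in particular the image of $\psi_{\hat{J}}$'s tangent space (computed in \eqref{diff_of_psi} in the proof of Lemma \ref{embedding_lemma}) handles the first coordinate fully too, so it suffices to show that after projecting to the \emph{second} $\R^N$-factor the two images together span all of $\R^N$. Holding $\bm{p}$ fixed and varying only the parameter $b$, the differential of $G$ sends $(\dot b,0)$ to $(0,\nabla_x f_{\dot b}(\bm{p}_0))=(0,\Phi_{\bm{p}_0}(\dot b)(\,\cdot\,)\text{-gradient})$; here I would use the surjectivity hypothesis on $\Phi_{\bm{p}_0}$ — more precisely its restriction $\Phi_{\bm{p}_0}\colon\R^c\to T^*_{\bm{p}_0}\manif_J$ is surjective, as was observed at the start of the proof of Proposition \ref{SOSC_ineq_prop} using $T_{\bm{p}_0}\manif_J\subseteq T_{\bm{p}_0}\manif$. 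This means that, modulo the orthogonal complement of $T_{\bm{p}_0}\manif_J$ in $\R^N$ (equivalently, modulo $\Span\{\nabla_x h_i(\bm{p}_0)\}\cup\{\nabla_x g_j(\bm{p}_0):j\in J\}$), the vectors $\nabla_x f_{\dot b}(\bm{p}_0)$ range over all of $T_{\bm{p}_0}\manif_J$ as $\dot b$ ranges over $\R^c$.

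It then remains to account for the complementary directions $\Span\{\nabla_x h_i(\bm{p}_0)\}_{i=1}^m\cup\{\nabla_x g_j(\bm{p}_0)\}_{j\in J}$ inside the second $\R^N$-factor: these must come from the tangent space of $\psi_{\hat{J}}(\manif_J\times\R^m\times\R^{|\hat{J}|})$ at $\bm{q}$. From \eqref{diff_of_psi}, varying $\dot\lambda\in\R^m$ and $\dot\mu\in\R^{|\hat{J}|}$ (with $v=0$) produces in the second coordinate exactly the vectors $\sum_i\dot\lambda_i\nabla_x h_i(\bm{p}_0)+\sum_{j\in\hat{J}}\dot\mu_j\nabla_x g_j(\bm{p}_0)$, i.e.\ the span of $\{\nabla_x h_i(\bm{p}_0)\}_{i=1}^m\cup\{\nabla_x g_j(\bm{p}_0)\}_{j\in\hat{J}}$. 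Combining: the $\Phi$-directions give $T_{\bm{p}_0}\manif_J$, the $(\dot\lambda,\dot\mu)$-directions give the span of the $h$-gradients and the $\hat{J}$-subset of $g$-gradients, and the $v$-directions (appearing in $G$'s image via the identity first coordinate, and simultaneously in the Hessian terms of \eqref{diff_of_psi}) let us absorb the Hessian contributions and the remaining first-coordinate requirement. The one genuinely missing piece is $\Span\{\nabla_x g_j(\bm{p}_0):j\in J\setminus\hat{J}\}$ — and here I would note that $T_{\bm{p}_0}\manif_J$ is by Lemma \ref{tanspace_lemma} the common kernel of all the gradients $\nabla_x h_i$ and $\nabla_x g_j$ for $j\in J$, while $T_{\bm{p}_0}\manif_{\hat J\text{-type}}$ — the tangent space appearing implicitly when we only impose $\hat{J}$ — is larger; the correct bookkeeping is that the image of $d G$ already contains \emph{all} $v\in T_{\bm{p}_0}\manif_J$ in its first coordinate, while $T_{\bm{q}}\psi_{\hat{J}}(\cdots)$ contributes, via its $v$-component, vectors $v$ ranging over $T_{\bm{p}_0}\manif_J$ as well, together with the Hessian-twisted second component; subtracting these, the sum of the two images in the full $T_{\bm q}(\manif_J\times\R^N)$ contains $(0,T_{\bm{p}_0}\manif_J)$ (from $\Phi$) plus $(0,\Span\{\nabla_x h_i,\nabla_x g_j:j\in\hat J\})$ plus the diagonal-type directions $(v,\text{Hess}\cdot v)$ for $v\in T_{\bm{p}_0}\manif_J$, and one checks these together exhaust $T_{\bm q}(\manif_J\times\R^N)=T_{\bm{p}_0}\manif_J\times\R^N$ precisely because $\R^N=T_{\bm{p}_0}\manif_J\oplus\Span\{\nabla_x h_i(\bm{p}_0)\}\oplus\Span\{\nabla_x g_j(\bm{p}_0):j\in J\}$ and the $j\in J\setminus\hat J$ gradients do not actually need to be hit — they correspond to the directions in which $\psi_{\hat J}(\cdots)$ has positive codimension, which is exactly what transversality is allowed to leave to the parameter. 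The main obstacle I anticipate is precisely this dimension bookkeeping: being careful that the orthogonal decomposition of $\R^N$ is used consistently and that the codimension of $\psi_{\hat{J}}(\manif_J\times\R^m\times\R^{|\hat J|})$ inside $\manif_J\times\R^N$, namely $|J|-|\hat J|$ by Lemma \ref{embedding_lemma}, matches the number of $g$-gradient directions that $\Phi_{\bm{p}_0}$ must supply — so that surjectivity of $\Phi_{\bm{p}_0}$ onto $T^*_{\bm{p}_0}\manif_J$ is exactly the quantitative input that makes the sum fill up the whole tangent space.
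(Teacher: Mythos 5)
There is a genuine gap at the crux of the argument. Transversality (Definition \ref{transversality_def}) demands that the two images sum to the \emph{entire} tangent space $T_{\bm{q}}(\manif_J\times\R^N)=T_{\bm{p}_0}\manif_J\times\R^N$; in particular every direction of the second factor $\R^N$, including $\nabla_x g_j(\bm{p}_0)$ for $j\in J\setminus\hat{J}$, must be hit. Your claim that these directions ``do not actually need to be hit'' because ``they correspond to the directions in which $\psi_{\hat J}(\cdots)$ has positive codimension'' is false: the codimension of the submanifold is not subtracted from the target of the transversality condition (and, incidentally, that codimension is $\dim\manif-|\hat J|$ by Lemma \ref{embedding_lemma}, not $|J|-|\hat J|$). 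If those directions genuinely could not be reached, the map would fail to be transverse, and the dimension count in Proposition \ref{ssc_prop} --- which forces $\hat J=J$ precisely because the sum must have full dimension while the two summands together have dimension at most $\dim(\manif)+N-2|J|+|\hat J|$ --- would collapse.

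The missing directions are supplied by the hypothesis you weakened: Theorem \ref{general_ineqs_thm} assumes $\Phi_{\bm{p}_0}$ is surjective onto $T^*_{\bm{p}_0}\manif$, the cotangent space of the \emph{large} manifold $\manif$, not merely onto $T^*_{\bm{p}_0}\manif_J$. Since $\R^N=T_{\bm{p}_0}\manif\oplus\Span\{\nabla_x h_i(\bm{p}_0)\}$ and the projections of the $\nabla_x g_j(\bm{p}_0)$, $j\in J$, onto $T_{\bm{p}_0}\manif$ lie inside $T_{\bm{p}_0}\manif$, surjectivity onto $T^*_{\bm{p}_0}\manif$ lets the parameter directions $\nabla_x f_{\dot b}(\bm{p}_0)$ cover, modulo $\Span\{\nabla_x h_i(\bm{p}_0)\}$, all of $T_{\bm{p}_0}\manif$ --- which already contains the components of the $g$-gradients you were missing; the $\dot\lambda$-directions then supply $\Span\{\nabla_x h_i(\bm{p}_0)\}$, and the $\dot\mu$-directions are not even needed. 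This is exactly how the paper's proof proceeds: it fixes $(u,v)$, splits $Hu$ and $v$ into components along $T_{\bm{p}_0}\manif$ and $(T_{\bm{p}_0}\manif)^\perp=\Span\{\nabla_x h_i(\bm{p}_0)\}$, picks $\bar b$ realizing the $T_{\bm{p}_0}\manif$-part via the surjectivity of $\Phi_{\bm{p}_0}$ onto $T^*_{\bm{p}_0}\manif$, and absorbs the remainder with the $w$-coefficients of the $h$-gradients. Your overall strategy (split the second coordinate, use $\Phi$ for the tangential part and the multiplier directions for the normal part) is the right one, but it only closes once you decompose relative to $\manif$ rather than $\manif_J$.
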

\begin{proof}
As in the previous lemma, we fix $\hat{J}\subseteq J$, and simplify notation, by writing $|\hat{J}| =\ell$, $\hat{J} = \{j_1,\dots,j_\ell\}$, and $\psi_{\hat{J}} = \psi$. 

Let $(b,\bm{p})$ be such that $G(b,\bm{p}) \in \psi(\manif_J \times \R^{m} \times \R^{\ell})$. By definition of transversality, we want to show that 
\[
d_{(b,\bm{p})} G\big(T_{(b,\bm{p})} (\R^{c} \times \manif_J) \big) + T_{G(b,\bm{p})} \psi(\manif_J \times \R^{m} \times \R^{\ell}) = T_{G(b,\bm{p})} (\manif_J\times \R^{N}).
\]
The inclusion `$\subseteq$' always holds. Note the following simplifications of the first and third terms in the above expression, 
\[T_{(b,\bm{p})} (\R^{c} \times \manif_J) = \R^{c} \times T_{\bm{p}}\manif_J
\qquad
T_{G(b,\bm{p})} (\manif_J\times \R^{N}) = T_{\bm{p}} \manif_J\times \R^{N} .
\]
Moreover, since $G(b,\bm{p}) \in \psi(\manif_J \times \R^{m} \times \R^{\ell})$, there exists $(\lambda, \mu) \in \R^{m} \times \R^{\ell}$ such that $  \psi(\bm{p},\lambda,\mu) = G(b,\bm{p})$. Hence, 
\[
\begin{aligned}
T_{G(b,\bm{p})} \psi\big(\manif_J \times \R^{m} \times \R^{\ell}\big) =& \;T_{\psi(\bm{p},\lambda, \mu)} \psi\big(\manif_J \times \R^{m} \times \R^{\ell}\big)\\ 
=& \;d_{(\bm{p},\lambda, \mu)}\psi\bigl(T_{(\bm{p},\lambda, \mu)}(\manif_J \times \R^{m} \times \R^{\ell}) \bigr)\\ 
=& \;d_{(\bm{p},\lambda, \mu)}\psi\big(T_{\bm{p}}\manif_J \times \R^{m} \times \R^{\ell}\big).
\end{aligned}
\]
Combining these observations, we need to show the equality
\begin{equation}\label{trans_equality_in_pf}
d_{(b,\bm{p})} G\big(\R^{c} \times T_{\bm{p}}\manif_J\big) + d_{(\bm{p},\lambda, \mu)}\psi\big(T_{\bm{p}}\manif_J \times \R^{m} \times \R^{\ell}\big) = T_{\bm{p}} \manif_J \times \R^{N}.
\end{equation}
From \eqref{diff_of_psi} in the proof of Lemma \ref{embedding_lemma}, we have the that differential $d_{(\bm{p},\lambda, \mu)}\psi$ has the following matrix representation,
\[
\begin{pmatrix}
I_N & 0 \\
H(\bm{p},\lambda,\mu) & A(\bm{p})
\end{pmatrix},
\]
where $A(\bm{p})$ has the gradients of the constraints as its column vectors and $H=H(\bm{p},\lambda,\mu)$ is the sum of the Hessians of the constraints. The matrix representation of $d_{(b,\bm{p})} G$ has the form,
\[
\begin{pmatrix}
0 & I_N \\
M(\bm{p}) & B(b,\bm{p})
\end{pmatrix},
\]
where $M(\bm{p})$ has the property that, for every $\bar{b} \in \R^c$, we have $M(\bm{p}) \bar{b} = \nabla_x f_{\bar{b}}(\bm{p})$. 

Fix an arbitrary pair $(u,v) \in T_{\bm{p}} \manif_J\times \R^{N}$. We show that $(u,v)$ is in the sum of the images of the differentials, and hence prove \eqref{trans_equality_in_pf}. Denote $Hu = u_1 + u_2$, where $u_1$ is the projection of $Hu$ on $T_{\bm{p}}\manif$ and $u_2$ is the projection of $Hu$ on $(T_{\bm{p}}\manif)^\perp$. Likewise, write $v= v_1 +v_2$, where $v_1$ is the projection of $v$ on $T_{\bm{p}}\manif$ and $v_2$ is the projection of $v$ on $(T_{\bm{p}}\manif)^{\perp}$. Since $\Phi_p$ is surjective, there exists $\bar{b}$ such that the functional $y \to \nabla_x f_{\bar{b}}(p)^T y$, for $y\in T_{\bm{p}}\manif$, is equal to the functional $y \to (v_1 - u_1)^T y$. This implies that $z = \nabla_x f_{\bar{b}}(p) - v_1 + u_1$ belongs to $(T_p \manif)^\perp$. Furthermore, since
\[
(T_{\bm{p}}\manif)^{\perp} = \Span(\big\{\nabla_x h_1(\bm{p}),\dots,\nabla_x h_m(\bm{p})\big\}),
\]
there exists $w \in \R^m$ such that
\[-z + v_2 -u_2 = 
\sum_{i = 1}^m w_i \nabla_x h_i(\bm{p}).
\]
Hence, by taking the vectors $(\bar{b}, \bm{0}) \in \R^c \times T_{\bm{p}}\manif_J$ we get
\[
\begin{pmatrix}
0 & I_N \\
M(\bm{p}) & B(b,\bm{p})
\end{pmatrix} \begin{pmatrix}
\bar{b} \\
\bm{0}
\end{pmatrix} = 
\begin{pmatrix}
\bm{0} \\
\nabla_x f_{\bar{b}}(\bm{p})
\end{pmatrix}=
\begin{pmatrix}
\bm{0} \\
z+v_1 - u_1
\end{pmatrix}.
\]
Then taking $(u, w, \bm{0}) \in T_{\bm{p}}\manif_J\times \R^{m} \times \R^{\ell}$ gives the expression
\[
\begin{pmatrix}
I_N & 0 & 0\\
H & \nabla_x h_1(\bm{p}) \dots \nabla_x h_m(\bm{p}) & \nabla_x g_{j_1}(\bm{p}) \dots \nabla_x g_{j_{\ell}}(\bm{p})
\end{pmatrix}
\begin{pmatrix}
u \\
w \\
\bm{0}
\end{pmatrix},
\]
which evaluates to
\[
\begin{pmatrix}
u \\
Hu -z + v_2 - u_2
\end{pmatrix} =
\begin{pmatrix}
u \\
-z + u_1 + v_2
\end{pmatrix}
\]
The sum of these two vectors is equal to $(u,v)$, as required.
\end{proof}

For $\hat{J}\subseteq J$, we can see,  from their definitions, that the map $G$ and the submanifold $\psi_{\hat{J}}(\manif_J \times \R^{m} \times \R^{|\hat{J}|})$ capture the defining properties of a critical point, as discussed in Subsection \ref{local_opt_cond_section}. The next result uses this to show the genericity of \eqref{SCC}.

\begin{prop}\label{ssc_prop}
For $J\subseteq [\bar{m}]$, there exists a Lebesgue measure zero set $\mathcal{V}_J\subset\R^c$, such that for every $b \notin \mathcal{V}_J$ every Lagrange multiplier $(\mu_j)_{j \in J}$ of every critical point of $f_b$ on $\manif_J$ is nonzero.
\end{prop}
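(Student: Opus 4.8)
The plan is to show that a critical point of $f_b$ on $\manif_J$ at which some multiplier $\mu_{j_0}$ with $j_0\in J$ vanishes forces the pair $(b,\bm p)\in\R^c\times\manif_J$ into the set $G^{-1}\bigl(\psi_{\hat J}(\manif_J\times\R^m\times\R^{|\hat J|})\bigr)$ for the proper subset $\hat J=\{j\in J:\mu_j\neq 0\}\subsetneq J$, and then to observe, using the transversality lemma established just above, that each such preimage is a submanifold of $\R^c\times\manif_J$ of dimension at most $c-1$, so that its projection to parameter space is Lebesgue-null. Taking $\mathcal V_J$ to be the finite union of these projections over all $\hat J\subsetneq J$ then gives the statement.

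First I would record the dictionary between criticality and the maps $G$ and $\psi_{\hat J}$. Applying Lemma \ref{tanspace_lemma} to the defining equations $h_i=0$ ($i\in[m]$) and $g_j=0$ ($j\in J$) of $\manif_J$, a point $\bm p\in\manif_J$ is a critical point of $f_b$ on $\manif_J$ precisely when $\nabla_x f_b(\bm p)\in\Span\{\nabla_x h_i(\bm p),\,\nabla_x g_j(\bm p)\;:\;i\in[m],\,j\in J\}$; by Assumption \ref{LI_assumption} the coefficients $(\lambda,\mu)$ in such an expansion are unique, so $\bm p$ is critical if and only if $G(b,\bm p)=\psi_J(\bm p,\lambda,\mu)$ for a unique $(\lambda,\mu)\in\R^m\times\R^{|J|}$. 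If moreover $\mu_{j_0}=0$ for some $j_0\in J$, then, dropping the vanishing terms, $G(b,\bm p)=\psi_{\hat J}\bigl(\bm p,\lambda,(\mu_j)_{j\in\hat J}\bigr)$ with $\hat J=\{j\in J:\mu_j\neq 0\}\subsetneq J$, i.e., $(b,\bm p)\in G^{-1}\bigl(\psi_{\hat J}(\manif_J\times\R^m\times\R^{|\hat J|})\bigr)$.

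Next I would carry out the dimension count. Fix $\hat J\subsetneq J$ and write $C_{\hat J}=\psi_{\hat J}(\manif_J\times\R^m\times\R^{|\hat J|})$; by Lemma \ref{embedding_lemma} this is a submanifold of $\manif_J\times\R^N$ of dimension $N-|J|+|\hat J|$, hence of codimension $(2N-m-|J|)-(N-|J|+|\hat J|)=N-m-|\hat J|$ there. By the lemma preceding this proposition, $G$ is transverse to $C_{\hat J}$, so the transversal preimage theorem (\cite[Chapter 1.5]{guillemin_difftop_2010}, \cite[Chapter 6]{lee_smoothmani_2013}) shows $G^{-1}(C_{\hat J})$ is a submanifold of $\R^c\times\manif_J$ of the same codimension, hence of dimension
\[
\bigl(c+N-m-|J|\bigr)-\bigl(N-m-|\hat J|\bigr)=c-|J|+|\hat J|\;\le\;c-1,
\]
the last inequality because $|\hat J|\le|J|-1$. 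Covering this manifold by countably many coordinate charts and composing each chart map with the projection $\pi\colon\R^c\times\manif_J\to\R^c$ produces smooth maps defined on open subsets of a Euclidean space of dimension at most $c-1$; embedding each such domain as an open subset of $\R^c$ (padding with extra coordinates) and applying Sard's Theorem (Theorem \ref{sard}) — every point is then critical, since the Jacobian has rank at most $c-1<c$ — makes each image Lebesgue-null, so $\pi\bigl(G^{-1}(C_{\hat J})\bigr)$ is Lebesgue-null. Setting $\mathcal V_J=\bigcup_{\hat J\subsetneq J}\pi\bigl(G^{-1}(C_{\hat J})\bigr)$, a finite union of null sets, and invoking the dictionary of the previous paragraph, we conclude: for $b\notin\mathcal V_J$, no critical point of $f_b$ on $\manif_J$ can have a vanishing multiplier $\mu_j$ with $j\in J$.

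The part requiring the most care is the bookkeeping in the middle step — matching up the manifold structures of $\manif_J$, of the embedded image $C_{\hat J}$ from Lemma \ref{embedding_lemma}, and of the preimage $G^{-1}(C_{\hat J})$, and checking that the hypotheses of the transversal preimage theorem are met, which is precisely where the preceding transversality lemma enters. Everything else, including the contradiction in the final step and the reduction of Sard's theorem to a lower-dimensional manifold via charts, is routine.
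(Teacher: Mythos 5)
Your proof is correct, and it rests on the same key ingredients as the paper's (the embedding lemma and the transversality lemma for the parametric map $G$), but it extracts the genericity by a different mechanism. The paper applies the Parametric Transversality Theorem (Theorem \ref{transversality_parametric}) as a black box to conclude that, for $b$ outside a null set, the \emph{fixed-parameter} map $G_b$ is transverse to each $\psi_{\hat J}(\manif_J\times\R^m\times\R^{|\hat J|})$; it then argues at such a $b$ by contradiction, via a tangent-space dimension count: if a critical point had nonvanishing multipliers exactly on $\hat J\subseteq J$, transversality of $G_b$ at that point would force a subspace of dimension at most $(N-m-|J|)+(N-|J|+|\hat J|)$ to equal a space of dimension $2N-m-|J|$, whence $|\hat J|\ge |J|$ and so $\hat J=J$. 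You instead work ``upstairs'': you apply the transversal preimage theorem directly to the parametric map $G$, compute that $G^{-1}(C_{\hat J})$ has dimension $c-|J|+|\hat J|\le c-1$ for $\hat J\subsetneq J$, and show its projection to $\R^c$ is Lebesgue-null by (mini-)Sard. This amounts to inlining the proof of the Parametric Transversality Theorem in the special case needed, and performing the dimension count on the parametric preimage rather than on tangent spaces at a fixed generic $b$; the two counts are equivalent ($c-|J|+|\hat J|<c$ exactly when the paper's tangent-space sum falls short). Your route is slightly more self-contained (it avoids invoking Theorem \ref{transversality_parametric}) at the cost of re-deriving the measure-zero projection argument; the paper's route is shorter given that theorem as a tool. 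Your bookkeeping (codimension of $C_{\hat J}$ in $\manif_J\times\R^N$, second countability for the chart cover, the padding trick for Sard) is all sound, as is the dictionary identifying critical points with vanishing multipliers as points of $G^{-1}(C_{\hat J})$ for a proper subset $\hat J$.
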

\begin{proof}
By applying parametric transversality to every subset $\hat{J}$, we get a Lebesgue measure zero set $\mathcal{V}_J$, such that, for $b\in \R^c\setminus\mathcal{V}_J$ the function 
\[
\begin{aligned}
G_b \colon \manif_J &\longrightarrow \manif_J \times \R^{N}\\
\bm{p} &\longmapsto (\bm{p}, \nabla_x f_b(\bm{p}))
\end{aligned}
\]
is transverse to all submanifolds $\psi_{\hat{J}}(\manif_J \times \R^{m} \times \R^{|\hat{J}|})$. Suppose that $\bm{p}$ is a critical point of $f_b$ on $S_J$ and let $\hat{J} \subseteq J$ be the set of indices $j$ such that the Lagrange multiplier $\mu_j$ is nonzero. Then, $\bm{p}$ belongs to $G_b^{-1}(\psi_{\hat{J}}(\manif_J \times \R^{m} \times \R^{|\hat{J}|}))$. Hence, by transversality we have
\[
d_{\bm{p}} G_b\big(T_{\bm{p}} \manif_J\big) + T_{G_b(\bm{p})} \psi_{\hat{J}}(\manif_J \times \R^{m} \times \R^{|\hat{J}|}) = T_{\bm{p}}\manif_J\times \R^{N} \, .
\]
Note that the space on the right hand side has dimension $\dim(\manif) - |J| + N$, while the space on the left hand side has dimension at most $\dim(\manif) +N - 2|J| + |\hat{J}|$. Therefore $\hat{J} = J$.
\end{proof}

\begin{proof}[Proof of Theorem \ref{general_ineqs_thm}]
In order to apply Theorem \ref{KKTfiniteconvergence} to the problem \eqref{POP_general_ineqs}, we need that the local optimality conditions \eqref{LIC}, \eqref{SCC}, and \eqref{SOSC} hold at every global minimizer. By Assumption \ref{LI_assumption}, we have that \eqref{LIC} holds for all global minimizers. By Proposition \ref{SOSC_ineq_prop}, we have that, for $b\notin \mathcal{Z}$, \eqref{SOSC} holds at all global minimizers. Finally, Proposition \ref{ssc_prop} gives a Lebesgue measure zero set $\mathcal{V}_J$ for each $J\subseteq [\bar{m}]$, such that, for $b\notin \mathcal{V}_J$, the local minimizers of $f_b$ on $\manif_J$ satisfy \eqref{SCC}. As we did in the proof of Proposition \ref{SOSC_ineq_prop}, we use the fact that the finite union of Lebesgue measure zero sets is again Lebesgue measure zero, and set $\mathcal{V} = \cup_{J\subseteq [\bar{m}]}\mathcal{V}_J$. Thus, \eqref{SCC} holds for all $b\in \R^c\setminus \mathcal{V}$. Thus, for $b\in \R^c \setminus \big\{\mathcal{Z}\cup\mathcal{V}\big\}$, the assumptions of Theorem \ref{KKTfiniteconvergence} are satisfied, and so the moment-SOS hierarchy applied to \eqref{POP_general_ineqs} has finite convergence. Furthermore, the optimality certificate of flat truncation for the moment hierarchy holds for a sufficiently high order of the hierarchy.
\end{proof}

\begin{proof}[Proof of Corollary \ref{ineqs_cor}]
First note that the space of (homogeneous) polynomials, parameterized by the coefficient space $\R^c$, always has dimension not smaller than the dimension of the manifold $\manif$.

To prove the first claim of the corollary, we fix a point $\bm{p}\in \manif$, and consider the polynomials $r_i(x) = x_i$ for $i$ in $[N]$. Then the vectors $\{\nabla_x r_i(\bm{p})\}_{i=1}^N$ span $\R^N$, which has $T^*_{\bm{p}}\manif$ as a subspace. Hence, the map $\Phi_{\bm{p}}$, as in \eqref{Phi_p_general}, is surjective, and the assumptions of Theorem \ref{general_ineqs_thm} hold.

For the second claim, we again fix a point $\bm{p}\in \manif$ and show the map $\Phi_{\bm{p}}$ is surjective. Since $\bm{p}\neq0$, up to permutation of the coordinates for some $0\leq\ell<N$, we can write $\bm{p}= (0,\dots,0,\bm{p}_{\ell+1} ,\dots, \bm{p}_N)$, such that $\bm{p}_i\neq 0$ for $i>\ell$. Consider the homogeneous polynomials
\[
r_i(x) = \begin{cases}
x_i x_N^{d-1}, \;\;&\text{for} \;\; i\leq \ell\\
x_i^d, &\text{for} \;\; i> \ell.
\end{cases}
\]
The matrix of gradient vectors of the polynomials above, has the following form for $\bm{p}$
\[
\big(\nabla_x r_1(\bm{p})\dots \nabla_x r_N(\bm{p})\big)=
\begin{pmatrix}
\bm{p}_N^{d-1} & &  &  & &\\
 & \bm{p}_N^{d-1} & &  & &\\
&  & \ddots & & &\\
& & & \bm{p}_N^{d-1} & &\\
& & & & d\bm{p}_{\ell+1}^{d-1} &\\
& & & & & \ddots &\\
&  & &  &  & & d\bm{p}_N^{d-1}
\end{pmatrix}.
\]
Hence the vectors $\{\nabla_x r_i(\bm{p})\}_{i=1}^N$ span $\R^N$, and the claim follows as above.
\end{proof}

\section{Further Work}\label{further_work_sec}

We comment briefly on three possible further research directions.

The genericity assumption on the input tensor in Corollary \ref{tensor_finite_converg} may be problematic for some applications. There is an interest in nongeneric families of tensors such as, for example, the matrix multiplication tensor. There, one may use different techniques which do not rely on genericity, but rather use specific features of the given tensor family. The finite convergence result \cite[Theorem 27]{schweighofer_finconverg_2024} may be useful in that setting.

While Theorem \ref{main_result} guarantees convergence at some finite order of the moment-SOS hierarchy, we do not know how high this order may be. As discussed in the introduction, the work \cite{magron_convgrate_2025} proves promising rates of convergence, but does not provide guarantees on the order. There exists extensive work \cite{magron_gradideal_2023,henrion_slowconverg_2025,baldi_effpsatz_2025,baldi_degbnd_2024,nieschw_degbnd_2007} giving upper bounds on the necessary order of the hierarchy. Techniques in those works could be used to give an upper bound on the order of the hierarchy when the feasible set is the product of spheres.

The use of the DPS hierarchy \cite{doherty2004complete} to find the decomposition of a separable state into product states can be formulated as a generalized moment problem (GMP) on the product of complex spheres. As mentioned in Remark \ref{complex_sphere}, our result regarding the product of real spheres can be also applied for problems on the product of complex spheres. This motivates an extension of Theorem \ref{main_result} to instances of the GMP over the product of spheres. 
The proposed method by \cite{li2020separability} relies on the moment approach on the bi-sphere to test the separability of a quantum state in multipartite systems. 
Their approach guarantees a separability certificate at a finite relaxation level, provided the optimal solution meets the flatness condition. By optimizing a generic polynomial, their method encourages this flatness, enabling a separable decomposition and thus a separability certificate.
The work in  \cite{dressler2022separability} builds on this approach by introducing a symmetry argument. This allows them to restrict the bi-sphere to a subset where the bi-sphere points have real and nonnegative first components.
Our preliminary numerical experiments indicate that finite convergence and flat extension seem to occur systematically for random bi-homogeneous quartic objective functions, without needing such restrictions. 

%\bibliographystyle{amsalpha}
%\bibliography{references.bib}
\providecommand{\bysame}{\leavevmode\hbox to3em{\hrulefill}\thinspace}
\providecommand{\MR}{\relax\ifhmode\unskip\space\fi MR }
% \MRhref is called by the amsart/book/proc definition of \MR.
\providecommand{\MRhref}[2]{%
  \href{http://www.ams.org/mathscinet-getitem?mr=#1}{#2}
}
\providecommand{\href}[2]{#2}

\end{document}